\DeclareMathOperator*{\esssup}{ess\,sup}
\DeclareMathOperator*{\essinf}{ess\,inf}
\DeclareMathOperator*{\osc}{osc}
\numberwithin{equation}{section}
\newtheorem{theorem}{Theorem}[section]
\newtheorem{lemma}{Lemma}[section]
\newtheorem{remark}{Remark}[section]
\newtheorem{definition}{Definition}[section]
\def\XXint#1#2#3{{\setbox0=\hbox{$#1{#2#3}{\int}$}
     \vcenter{\hbox{$#2#3$}}\kern-.5\wd0}}
\begin{document}

\title{On asymptotic behavior of solutions to non-uniformly elliptic equations with generalized Orlicz growth
}

\author{ O.V. Hadzhy, M.O. Savchenko, I.I. Skrypnik, M.V. Voitovych
 }

  \maketitle

  \begin{abstract}
We study asymptotic behavior of sub-solutions to non-uniformly elliptic equations with nonstandard growth. In particular,
 Harnack type inequalities  are proved. Our approach gives new results for the cases with $(p,q)$ nonlinearity
and generalized Orlicz growth.

\textbf{Keywords:}
non-uniformly elliptic equations, generalized Orlicz growth, Harnack type inequalities.

\textbf{MSC (2010)}: 35B40, 35D30, 35J60.

\end{abstract}

\pagestyle{myheadings} \thispagestyle{plain}
\markboth{}
{On asymptotic behavior ....}

\section{Introduction and main results}\label{Introduction}

To explain the point of view of this research  consider the following equations
$$ div\bigg(H_{i}(x, |\nabla u|)\frac{\nabla u}{| \nabla u|^{2}}\bigg)=0,\quad i=1,2,$$
$$H_{1}(x, v)=v^{p}+ a_{1}(x) v^{q},\quad a_{1}(x)\geqslant 0, \quad v>0,$$
and
$$H_{2}(x,v)=v^{p}\big(1+ a_{2}(x)\log (1+v) \big),\quad a_{2}(x)\geqslant 0, \quad v>0.$$
It is well known (see \cite{BarColMing})  that the correspondent non-negative bounded local weak solutions satisfy
Harnack's type inequality if
$$
\osc\limits_{B_{r}(x_{0})}a_{1}(x)\leqslant A_{1}\,r^{q-p} \quad \text{and}\quad
\osc\limits_{B_{r}(x_{0})}a_{2}(x)\leqslant A_{2}\left[\log\frac{1}{r}\right]^{-1},\quad A_{1},A_{2} > 0,
$$
or more generally (see \cite{HadSkrVoi}), Harnack's type inequality holds  under the conditions
$$
\osc\limits_{B_{r}(x_{0})}a_{1}(x)\leqslant A_{1}\,\left[\log\frac{1}{r}\right]^{L_{1}}\,r^{q-p}\quad \text{and}\quad
\osc\limits_{B_{r}(x_{0})}a_{2}(x)\leqslant A_{2}\,\left[\log\log\frac{1}{r}\right]^{L_{2}}\left[\log\frac{1}{r}\right]^{-1},\quad A_{1},A_{2} > 0,
$$
if $L_{1}, L_{2} >0$ are sufficiently small.

Now let $a_{1}(x)= a_{2}(x)= \big|\log|\log\frac{1}{|x-x_{0}|}|\big|^{L}, L\in \mathbb{R}^{1}$ and set $G_{1}(v)=v^{p}+  v^{q}$,\\ $G_{2}(v)=v^{p}\big(1+ \log (1+v) \big)$,
then  evidently we have
$$\gamma^{-1}\,\left|\log\left|\log\frac{1}{|x-x_{0}|}\right|\right|^{L}\,G_{i}(v) \leqslant H_{i}(x, v) \leqslant \gamma\,G_{i}(v),\quad i=1,2,\quad \text{if}\quad L<0,$$
and
$$\gamma^{-1}\,G_{i}(v) \leqslant H_{i}(x, v) \leqslant \gamma\,\left|\log\left|\log\frac{1}{|x-x_{0}|}\right|\right|^{L}\,G_{i}(v),\quad i=1,2,\quad \text{if}\quad L>0,$$
for $v>0$ and for $x\in B_{R}(x_{0})$, if $R$ is sufficiently small.

By our main Theorems (see results below) Harnack's type inequality for
non-negative bounded local weak solutions of the correspondent equation will be valid if $L$ is
sufficiently small. One can see that in this sense our results improve the previous ones. However
our point of view is to take under consideration also non uniformly elliptic equations of the
previous type. Of course, here it would be interesting to unify our approach.

More precisely, in this paper we are concerned with elliptic equations
\begin{equation}\label{eq1.1}
 {\rm div}\bigg( H(x, | \nabla u|) \frac{\nabla u}{| \nabla u|^{2}} \bigg) = 0,\quad H(x,v):= \int\limits_{0}^{v}h(x,s)\,ds,\quad v>0,\quad x\in\Omega,
\end{equation}
where $\Omega$ is bounded domain in $\mathbb{R}^{n}$, $n\geqslant2$. We suppose that the function $ h(x, v):\Omega\times\mathbb{R}^{1}_{+}\rightarrow\mathbb{R}^{1}_{+}$ is such that $h (\cdot, v)$ is Lebesgue measurable for all $v\in \mathbb{R}^{1}_{+}$,
and $h(x,\cdot)$ is increasing and continuous for almost all $x\in\Omega$, $\lim\limits_{v\rightarrow 0}h(x, v)=0,
\lim\limits_{v\rightarrow \infty} h(x, v)=\infty$ .

We assume also that the following structure conditions are satisfied
\begin{equation}\label{eq1.2}
K_{1} \,a(x)\,g(x, v) \leqslant h(x, v) \leqslant K_{2} \,b(x)\,g(x, v),\quad x\in\Omega, v\in \mathbb{R}^{1}_{+},
\end{equation}
where $K_{1}$, $K_{2}$ are positive constants.

This type of equations belongs to a wide class of non uniformly elliptic equations with generalized
Orlicz growth. In terms of the function $g$, this class can be characterized as follows.
\begin{itemize}
\item[($g$)]
There exist $1<p<q$ such that  for $ w\geqslant v> 0$ there holds
\begin{equation*}\label{gqineq}
\left( \frac{ w}{ v} \right)^{p-1} \leqslant\frac{g(x, w)}{g(x, v)}\leqslant
 \left( \frac{w}{v} \right)^{q-1},\quad x\in \Omega.
\end{equation*}
\end{itemize}
We also assume that one of the following conditions holds:
\begin{itemize}
\item[ ($g_{\mu}$)]
there exists $R_{0} >0$ and  non-increasing  function $\mu(r)\geqslant 1$ for $r\in (0, R_{0}),$ $\lim\limits_{r\rightarrow 0}\,\mu(r)=+\infty, $ $\lim\limits_{r\rightarrow 0}\,\mu(r) r=0,$ such that for any $K>0$ there holds
\begin{equation*}
g\left(x, \frac{v}{r}\right) \leqslant C(K)\,\mu(r) g\left(y, \frac{v}{r}\right),\,\,\, \text{for any}\,\, x,y\in B_{r}(x_{0})\subset B_{R_{0}}(x_{0})\subset \Omega \,\,\, \text{and} \,\,\, r\leqslant v \leqslant K,
\end{equation*}
with some $C(K)>0$,
 \end{itemize}
or
\begin{itemize}
\item[ ($ g_{\lambda}$)]
there exists $R_{0} >0$ and  non-decreasing  function $0< \lambda(r)\leqslant 1$ for $r\in (0, R_{0}),$\\ $\lim\limits_{r\rightarrow 0}\lambda(r)= 0, $ $\lim\limits_{r\rightarrow 0}\,\dfrac{\lambda(r)}{r}= +\infty,$ such that for any $K>0$ there holds
\begin{equation*}
g\left(x, \frac{v}{r}\right) \leqslant C(K)\, g\left(y, \frac{v}{r}\right),\,\,\, \text{for any}\,\, x,y\in B_{r}(x_{0})\subset B_{R_{0}}(x_{0})\subset \Omega \,\,\, \text{and} \,\,\, r\leqslant v \leqslant \lambda(r)\,K,
\end{equation*}
with some $C(K)>0$.
 \end{itemize}

For the functions $a(x), b(x) \geqslant 0$ we assume that
\begin{equation}\label{eq1.3}
a^{-1}(x)\in L^{t}_{loc}(\Omega),\quad b(x) \in L^{s}_{loc}(\Omega),\quad \frac{1}{tp} +\frac{1}{sq} +\frac{1}{p} -\frac{1}{q} <\frac{1}{n},
\end{equation}
$$t\in\left(\max\left(1,\frac{q-p+1}{p-1}\right),\infty\right],\quad s\in(1,\infty].$$
\begin{remark}\label{rem1.1}
{\rm
We note that the assumptions on the functions $a(x)$ and $b(x)$ imply the local boundedness of  sub-solutions to Eq. \eqref{eq1.1}, for this we
refer the reader to the paper by Cupini, Marcellini and Mascolo \cite{CupMarMas}.
}
\end{remark}
Here some typical examples of the functions $g(x, v)$ and $h(x, v)$, which satisfy the above conditions:
\begin{itemize}
\item $g(x, v)= v^{p(x)-1},\quad \osc\limits_{B_{r}(x_{0})} p(x) \leqslant A\,\dfrac{\bar{\mu}(r)}{\log\frac{1}{r}},\,\quad
\lim\limits_{r\rightarrow 0}\bar{\mu}(r)=\infty, \quad \lim\limits_{r\rightarrow 0}\dfrac{\bar{\mu}(r)}{\log\frac{1}{r}}=0$ \\satisfies condition
($g_{\mu}$) with $\mu(r)=\exp\big(A\bar{\mu}(r)\big)$,

\item $g(x, v)= v^{p-1}+ a_{1}(x) v^{q-1}$,\,\, $\osc\limits_{B_{r}(x_{0})}a_{1}(x)\leqslant A_{1}\,\bar{\mu}(r)\,r^{q-p},\,\,
\lim\limits_{r\rightarrow 0}\bar{\mu}(r)=\infty,\,\, \lim\limits_{r\rightarrow 0}\bar{\mu}(r)\,r^{q-p}=0,$\\
satisfies conditions ($g_{\mu}$), ($g_{\lambda}$) with $\mu(r)=\bar{\mu}(r)$ and $\lambda(r)=[\bar{\mu}(r)]^{-\frac{1}{q-p}}$,

\item $g(x, v)=v^{p-1}\big(1+ \log(1+a_{2}(x)\,v)\big)$, $\osc\limits_{B_{r}(x_{0})}a_{2}(x) \leqslant A_{2}\,\bar{\mu}(r)\,r$,
 $\lim\limits_{r\rightarrow 0}\bar{\mu}(r)=\infty, \lim\limits_{r\rightarrow 0}\bar{\mu}(r)\,r=0,$\\
satisfies conditions ($g_{\mu}$), $(g_{\lambda})$ with  $\mu(r)=\log\big(A_{2}\bar{\mu}(r)\big)$ and $\lambda(r)=[\bar{\mu}(r)]^{-1}$,

\item $h(x,v)= a(x)\,v^{p(x)-1}$, $h(x,v)= v^{p-1} +a(x) v^{q-1}$, $h(x,v)=v^{p-1}\big(1+a(x)\log(1+a_{2}(x)\,v)\big)$,
$a^{-1}(x)\in L^{t}_{loc}(\Omega),\quad a(x)\in L^{s}_{loc}(\Omega)$ satisfy conditions \eqref{eq1.2}, \eqref{eq1.3} and $(g)$.
\end{itemize}

Before formulating the main results, let us recall the definition of a  weak solution
to Eq. \eqref{eq1.1}. We write $W(\Omega)$ for the class of functions
$u\in W^{1,1}(\Omega)$ with $\int\limits_{\Omega}H (x,|\nabla u|)\,dx<+\infty$.
We also need a class of functions $W_{0}(\Omega )$ which consists of functions
$u\in W_{0}^{1,1}(\Omega)$
such that $\int\limits_{\Omega}H(x,|\nabla u|)\,dx<+\infty$.
\begin{definition}
{\rm
We say that a function $u$ is a  weak sub(super)-solution to Eq. \eqref{eq1.1} if \\$u\in W(\Omega)$ and the integral identity
\begin{equation}\label{eq1.4}
\int\limits_{\Omega} H(x, |\nabla u|)\,\frac{\nabla u}{|\nabla u|^{2} }\nabla\varphi \,dx\leqslant(\geqslant)\,0,
\end{equation}
holds for all non-negative test functions $\varphi\in W_{0}(\Omega)$.
}
\end{definition}

Let  $a_{1}(x)=1+a^{-1}(x),\quad b_{1}(x)=1+b(x)$, for any $x_{0}\in \mathbb{R}^{n}$ and $r >0$ set
\begin{multline*}
 \Lambda(x_{0}, r) :=  \bigg(r^{-n}\,\int\limits_{B_{r}(x_{0})} a^{t}_{1}(x) dx\bigg)^{\frac{1}{tp}} \bigg(r^{-n}\,\int\limits_{B_{r}(x_{0})} b^{s}_{1}(x) dx \bigg)^{\frac{1}{sp}}+ \\ + \bigg(r^{-n}\,\int\limits_{B_{r}(x_{0})} a^{t}_{1}(x) dx\bigg)^{\frac{q-1}{tp(p-1)}} \bigg(r^{-n}\,\int\limits_{B_{r}(x_{0})} b^{s}_{1}(x) dx \bigg)^{\frac{1}{sq}}.
\end{multline*}
We refer to the parameters $n$, $p$, $q$, $t$, $s$, $K_{1}, K_{2}$, $M:=\sup\limits_{\Omega}\,\, u$ and $C(M)$    as our structural
data, and we write $\gamma$ if it can be quantitatively determined as a priory in terms of the above
quantities. The generic constant $\gamma$ may change from line to line.

Our first main result of this paper reads as follows.
\begin{theorem}\label{th1.1}
Let $u$ be a   non-negative weak super-solution to Eq. \eqref{eq1.1}, let conditions \eqref{eq1.2}, \eqref{eq1.3}, ($g$) and ($g_{\lambda}$) be fulfilled, then there exist positive constants $\gamma $ and $\beta_{1}$ depending only on the data, such that for any $m >0$ and any $B_{8\rho}(x_{0})\subset \Omega$ there holds
\begin{equation}\label{eq1.5}
m\,\lambda(\rho)\,\bigg(\frac{|E(\rho, m)|}{\rho^{n}}\bigg)^{\frac{t+1}{t(p-1)}}  \leqslant \gamma\,\exp\left(\gamma\,\left[\Lambda\left(x_{0},\frac{\rho}{4}\right)\right]^{\beta_{1}}\right)\left(\inf\limits_{B_{\frac{\rho}{2}}(x_{0})}u(x) + \rho \right),
\end{equation}
where $ E(\rho, m):= B_{\rho}(x_{0})\cap \{u(x) > m \}$.
\end{theorem}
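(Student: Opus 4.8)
The plan is to derive \eqref{eq1.5} from a Caccioppoli--type energy estimate for super-solutions combined with an iteration of De Giorgi / Moser type, the non-uniform ellipticity being absorbed into $\Lambda(x_{0},\rho/4)$ via weighted Sobolev inequalities. \textbf{Step 1 (energy estimate).} Since $u\geqslant0$ is a weak super-solution, I would test \eqref{eq1.4}, in its ``$\geqslant$'' form, with $\varphi=\zeta^{q}\,\Phi(u+\rho)$, where $\zeta$ is a cut-off, $0\leqslant\zeta\leqslant1$, supported in $B_{2r}(x_{0})\subset B_{8\rho}(x_{0})$, and $\Phi\geqslant0$ is locally Lipschitz and non-increasing. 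Using that $h(x,\cdot)$ is increasing, the two-sided bound \eqref{eq1.2}, the comparison $a(x)\,v\,g(x,v)\lesssim H(x,v)\lesssim b(x)\,v\,g(x,v)$ (a consequence of $(g)$), and Young's inequality for the Orlicz function generated by $g$ (admissible because $(g)$ gives it the $\Delta_{2}$ and $\nabla_{2}$ properties), one is led to a family of Caccioppoli inequalities: taking $\Phi(\tau)=\tau^{-\beta}$ an estimate for negative powers of $u+\rho$, and $\Phi(\tau)\sim\tau^{-1}$ a logarithmic estimate for $w:=\bigl[\log\tfrac{m+\rho}{u+\rho}\bigr]_{+}$, each of the schematic form
\begin{equation*}
\int\limits_{B_{r}(x_{0})}\zeta^{q}\,a(x)\,g\bigl(x,|\nabla v|\bigr)\,|\nabla v|\,dx\;\leqslant\;\gamma\int\limits_{B_{2r}(x_{0})}|\nabla\zeta|^{q}\,b(x)\,\Bigl(1+g\bigl(x,\tfrac{v}{r}\bigr)\,\tfrac{v}{r}\Bigr)\,dx,
\end{equation*}
$v$ denoting the relevant power or logarithm of $u+\rho$. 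One first rescales $u$ and $m$ so that the arguments of $g$ stay in a range controlled by the data.

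\textbf{Step 2 (freezing the coefficient; weighted Sobolev).} On a ball $B_{r}(x_{0})\subset B_{R_{0}}(x_{0})$, condition $(g_{\lambda})$ allows one to replace $g(x,\cdot)$ by $g(x_{0},\cdot)$ up to the factor $C(K)$, provided the argument of the rescaled function $g(\,\cdot\,,v/r)$ stays below order $\lambda(r)\,K$. Combining this frozen estimate with the power bounds $(g)$ and the classical Sobolev inequality converts the energy estimate into a self-improving (reverse-H\"older) inequality on each dyadic annulus; because \eqref{eq1.3} makes the surplus $\tfrac1n-\bigl(\tfrac1{tp}+\tfrac1{sq}+\tfrac1p-\tfrac1q\bigr)$ strictly positive, iterating over the scales between $\rho/4$ and $\rho/2$ accumulates constants that sum to a quantity of the form $\exp\bigl(\gamma\,[\Lambda(x_{0},\rho/4)]^{\beta_{1}}\bigr)$, with $\beta_{1}$ and the power of $\Lambda$ the solution of the recursion obtained by playing the Sobolev exponent $n/(n-1)$ against that surplus; the two terms of $\Lambda$ arise from applying H\"older's inequality to the $a$-weighted left side and the $b$-weighted right side of the energy estimate. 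The loss of the factor $\lambda(\rho)$ on the left of \eqref{eq1.5} is the degeneration of coercivity at scale $\rho$ quantified by $(g_{\lambda})$: only levels of order $\lambda(\rho)\,K$ of the rescaled variable are reachable by the frozen comparison.

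\textbf{Step 3 (iteration and conclusion).} Iterating the negative-power estimate of Step 1 against the weighted Sobolev inequality of Step 2 yields an $L^{\infty}$ bound for $(u+\rho)^{-1}$ on $B_{\rho/2}(x_{0})$ by an averaged negative power $\bigl(\rho^{-n}\int_{B_{\rho}(x_{0})}(u+\rho)^{-\kappa_{0}}dx\bigr)^{1/\kappa_{0}}$, with constant $\gamma\exp(\gamma[\Lambda(x_{0},\rho/4)]^{\beta_{1}})$, for a suitable $\kappa_{0}>0$; the logarithmic estimate then supplies the crossover from negative to positive powers. Altogether one arrives at a weak Harnack inequality
\begin{equation*}
\lambda(\rho)\Bigl(\rho^{-n}\!\!\int_{B_{\rho}(x_{0})}(u+\rho)^{\kappa_{0}}\,dx\Bigr)^{1/\kappa_{0}}\;\leqslant\;\gamma\,\exp\bigl(\gamma[\Lambda(x_{0},\rho/4)]^{\beta_{1}}\bigr)\Bigl(\inf_{B_{\rho/2}(x_{0})}u+\rho\Bigr),
\end{equation*}
in which the admissible exponent is $\kappa_{0}=\tfrac{t(p-1)}{t+1}$ --- the $\tfrac1t$-correction to the classical $p-1$ being precisely the loss to $a^{-1}\in L^{t}$ in the weighted-to-unweighted conversion of Step 2, and the prefactor $\lambda(\rho)$ the loss from $(g_{\lambda})$. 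Since $u+\rho>m$ on $E(\rho,m)$ one has $\rho^{-n}\int_{B_{\rho}(x_{0})}(u+\rho)^{\kappa_{0}}dx\geqslant m^{\kappa_{0}}\rho^{-n}|E(\rho,m)|$, and substituting this into the weak Harnack inequality gives \eqref{eq1.5}.

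\textbf{Main obstacle.} The crux is Step 2: controlling how the $(p,q)$-gap together with the merely $L^{t}$, $L^{s}$ integrability of $a^{-1}$ and $b$ propagates through the dyadic iteration so as to produce precisely the factor $\exp\bigl(\gamma[\Lambda(x_{0},\rho/4)]^{\beta_{1}}\bigr)$, and the merely degenerate ellipticity to produce precisely the factor $\lambda(\rho)$ --- nothing worse in either case. This forces one to invoke $(g_{\lambda})$ at exactly the right scales and to split the exponents in the Orlicz Young and H\"older inequalities so that the two terms of $\Lambda(x_{0},r)$ emerge in the form written in its definition; a further technical point is to keep all rescalings of $u$ consistent with the structural constant $C(M)$.
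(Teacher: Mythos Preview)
Your proposal follows the classical Moser route --- test with negative powers, cross over via a logarithmic/John--Nirenberg step, and read off \eqref{eq1.5} from the resulting weak Harnack inequality. The paper does something entirely different, and in fact the introduction explicitly says that Moser's method, as used in \cite{ShSkrVoit20, Tru}, does not go through under $(g_{\lambda})$. The authors instead construct an auxiliary \emph{capacitary potential}: given the set $\bar E(\rho,m)=B_{\rho}(x_{0})\cap\{u\geqslant m\lambda(\rho)\}$, they solve the equation in $\mathcal{D}=B_{8\rho}(x_{0})\setminus\bar E$ with boundary values $m\lambda(\rho)$ on $\bar E$ and $0$ on $\partial B_{8\rho}$, prove matching upper and lower pointwise bounds for this solution $v$ in terms of the capacity $C_{H}(\bar E,B_{8\rho};m\lambda(\rho))$ (Lemmas~\ref{lem3.1}, \ref{lem3.2}), estimate the capacity from below by $(|E|/\rho^{n})^{\frac{t+1}{t(p-1)}}$ via Lemma~\ref{lem2.3}, transfer the measure estimate to $u$ by the comparison principle, and finish with the expansion--of--positivity Lemma~\ref{lem2.7}. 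In the paper's logic Theorem~\ref{th1.3} is a \emph{consequence} of Theorem~\ref{th1.1}, not the other way around.

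The genuine gap in your plan is Step~2. Condition $(g_{\lambda})$ lets you freeze $g(x,v/r)$ to $g(y,v/r)$ only when $r\leqslant v\leqslant \lambda(r)K$; since $\lambda(r)\to 0$, this window shrinks as you go to smaller scales. In a Moser iteration you test with $\zeta^{q}(u+\rho)^{-\beta}$ on balls $B_{r}$, and after Young's inequality the right-hand side carries $g\bigl(x,(u+\rho)/r\bigr)$ with $(u+\rho)$ ranging all the way up to $M+\rho$, far outside the admissible window $[\,r,\lambda(r)K\,]$. Your remark that one ``first rescales $u$ and $m$'' does not resolve this: no uniform rescaling of $u$ keeps $(u+\rho)/r$ inside $[1,\lambda(r)K/r]$ simultaneously for all $r$ in a dyadic tower, and truncating $u$ destroys the super-solution property needed for the test functions. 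The paper sidesteps this by working not with $u$ but with the auxiliary solution $v$, which by construction satisfies $0\leqslant v\leqslant m\lambda(\rho)$ on all of $\mathcal{D}$; this is exactly what puts $v/r$ into the $(g_{\lambda})$ window at every scale and is the mechanism that produces the single factor $\lambda(\rho)$ rather than an uncontrolled product. Your crossover step has the same problem, and in addition you have not explained how a John--Nirenberg argument survives the $(p,q)$-gap and the weights $a,b$ simultaneously --- this is not standard and is another reason the paper avoids the Moser scheme here.
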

\begin{theorem}\label{th1.2}
Let $u$ be a  non-negative weak super-solution to Eq. \eqref{eq1.1}, let conditions \eqref{eq1.2}, \eqref{eq1.3}, ($g$) and ($g_{\mu}$) be fulfilled, then there exist positive constants $\gamma $ and $\beta_{1}$ depending only on the data, such that for any $m>0$ and any $B_{8\rho}(x_{0})\subset \Omega$ there holds
\begin{equation}\label{eq1.6}
m\,\bigg(\frac{|E(\rho, m)|}{\rho^{n}}\bigg)^{\frac{t+1}{t(p-1)}}  \leqslant \gamma\,\exp\left(\gamma\,\left[\mu\left(\frac{\rho}{4}\right)\,\Lambda\left(x_{0},\frac{\rho}{4}\right)\right]^{\beta_{1}}\right)\left(\inf\limits_{B_{\frac{\rho}{2}}(x_{0})}u(x) + \rho \right).
\end{equation}
\end{theorem}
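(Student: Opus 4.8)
The plan is to derive the two estimates from a single De Giorgi–type iteration applied to the super-solution, with the only difference between Theorems \ref{th1.1} and \ref{th1.2} being the range of truncation levels dictated by the two alternative structure conditions $(g_\lambda)$ and $(g_\mu)$. First I would fix $B_{8\rho}(x_0)\subset\Omega$ and, for a level $k>0$, test the weak super-solution inequality \eqref{eq1.4} with $\varphi = (k-u)_+ \xi^q$ (suitably normalized, with $\xi$ a standard cutoff between concentric balls), thereby obtaining a Caccioppoli-type inequality for the truncations $(k-u)_+$. Here one uses the lower bound in \eqref{eq1.2}, $h(x,v)\ge K_1 a(x) g(x,v)$, together with the $\Delta_2$/$\nabla_2$-type control packaged in condition $(g)$: the doubling inequality $\bigl(\tfrac wv\bigr)^{p-1}\le \tfrac{g(x,w)}{g(x,v)}\le \bigl(\tfrac wv\bigr)^{q-1}$ lets one absorb cross terms and convert the energy $\int a(x) g(x,|\nabla u|)|\nabla u|$ into something comparable to $\int a(x)\,|\nabla (k-u)_+/\rho|^{p}\rho^{p}$ plus lower-order contributions. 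The weights $a^{-1}\in L^t$ and $b\in L^s$ enter through Hölder's inequality exactly so that the resulting right-hand side is controlled by $\Lambda(x_0,\rho)$ (respectively $\mu(\rho)\Lambda(x_0,\rho)$).

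Next I would feed this Caccioppoli inequality into a Sobolev embedding on $B_\rho$, the point being that \eqref{eq1.3}, namely $\tfrac1{tp}+\tfrac1{sq}+\tfrac1p-\tfrac1q<\tfrac1n$, is precisely the condition guaranteeing a weighted Sobolev–Poincaré inequality with a gain of integrability; this is the same threshold under which Cupini–Marcellini–Mascolo obtain local boundedness (Remark \ref{rem1.1}). Running the De Giorgi iteration over a sequence of shrinking radii $\rho_j\downarrow \rho/2$ and increasing levels $k_j\uparrow$ selected appropriately, one shows that if the measure $|E(\rho,m)|/\rho^n =: \nu$ of the super-level set is small (in a way made quantitative by $\Lambda$), then $u\ge$ (a definite fraction of the top level) on $B_{\rho/2}$. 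Unwinding the iteration produces a bound of the form $\inf_{B_{\rho/2}} u + \rho \ge \gamma^{-1} \exp(-\gamma[\Lambda(x_0,\rho/4)]^{\beta_1})\, m\,\nu^{(t+1)/(t(p-1))}$, which is \eqref{eq1.5} up to the factor $\lambda(\rho)$; the exponent $(t+1)/(t(p-1))$ is exactly what the weighted Sobolev inequality with the $L^t$ weight $a^{-1}$ yields after tracking how the measure of the bad set propagates through the iteration, and the exponential in $[\Lambda]^{\beta_1}$ comes from summing the geometric series of iteration constants, each of which is a power of $\Lambda$.

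The role of the extra factor $\lambda(\rho)$ in \eqref{eq1.5} versus its absence in \eqref{eq1.6} is the one genuinely delicate point, and I expect it to be the main obstacle. Under $(g_\lambda)$ the comparability of $g$ at points $x,y\in B_r(x_0)$ holds only for arguments $v/r$ with $r\le v\le \lambda(r)K$, i.e. on a range of gradients that shrinks as $r\to0$; this forces the iteration to be run with truncation increments scaled by $\lambda(\rho)$, so the effective "height" recovered at the infimum is $m\,\lambda(\rho)$ times the measure factor rather than $m$ alone. Under $(g_\mu)$, by contrast, comparability holds on the full range $r\le v\le K$ at the cost of the multiplicative blow-up $\mu(r)$, which is harmless for the iteration (it only enlarges $\Lambda$ to $\mu\Lambda$ inside the exponential) and leaves the clean factor $m$ in front. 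So the two proofs are structurally identical; one must simply be careful, in the $(g_\lambda)$ case, that every application of the comparability of $g$ respects the admissible gradient range, which is what dictates the choice of test-function normalization and hence the appearance of $\lambda(\rho)$.

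A secondary technical nuisance is the passage from the differential inequality to the integral estimate when $|\nabla u|=0$: one handles this in the usual way by regularizing $h$ (or working with $h_\varepsilon(x,v)=h(x,v+\varepsilon)$), deriving the estimate, and letting $\varepsilon\to0$, using $\lim_{v\to0}h(x,v)=0$ and monotone/dominated convergence justified by $u\in W(\Omega)$. I would relegate this and the verification of the weighted Sobolev–Poincaré inequality under \eqref{eq1.3} to lemmas, and present the iteration itself as the core of the argument.
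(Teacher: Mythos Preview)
Your proposal has a genuine gap: a single De\,Giorgi iteration on $(k-u)_+$ does not by itself produce the estimate \eqref{eq1.6}. What such an iteration yields is a statement of the type in Lemma~\ref{lem2.6}: \emph{if} the sub-level set already has measure at most $\nu_1[\mu(r)\Lambda(x_0,r)]^{-1/\kappa}|B_r|$, \emph{then} the infimum on the half-ball is bounded below. But \eqref{eq1.6} must hold for \emph{every} value of $\nu:=|E(\rho,m)|/\rho^n$, and it is precisely in the regime where $\nu$ is neither close to $0$ nor close to $1$ that the inequality carries content. The passage from ``$|\{u>m\}|\geqslant \nu|B_\rho|$ with arbitrary $\nu$'' to ``sub-level set has small enough measure for the De\,Giorgi lemma to fire'' is the expansion-of-positivity step, and the exponential $\exp(\gamma[\mu\Lambda]^{\beta_1})$ comes from \emph{that} step (the number $j_*$ of level halvings in Lemma~\ref{lem2.8} is $\sim[\mu\Lambda]^{\bar\beta_1}$), not from summing a geometric series in the basic iteration. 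Your sketch conflates these two stages; moreover the sentence ``if the measure $\nu$ of the super-level set is small \ldots then $u\geqslant$ a definite fraction'' is the wrong direction --- small $\nu$ makes \eqref{eq1.6} trivial.

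The paper supplies the missing bridge by an auxiliary-solution (capacitary potential) argument in the spirit of Maz'ya and Landis. One solves the equation for $w$ in $\mathcal{D}=B_{8\rho}(x_0)\setminus E$ with $w=m$ on $E$ and $w=0$ on $\partial B_{8\rho}$, proves matching upper and lower pointwise bounds for $w$ on the annulus $K_{\rho/4,2\rho}$ in terms of the capacity $C_H(E,B_{8\rho}(x_0);m)$ (Lemmas~\ref{lem3.1}, \ref{lem4.1}), and converts capacity into $(|E|/\rho^n)^{(t+1)/(t(p-1))}$ via Lemma~\ref{lem2.3} with $Q(v)=[\overline{G}^-_{B_{8\rho}}(v)]^{t/(t+1)}$ --- this is where the exponent $(t+1)/(t(p-1))$ actually originates, not from the weighted Sobolev embedding you describe. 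The comparison principle then gives $u\geqslant w$, so the lower measure bound for $w$ becomes one for $u$ on the annulus, and only at this last stage does one invoke Lemma~\ref{lem2.8} to propagate to $\inf_{B_{\rho/2}}u$. Your remark that the $(g_\lambda)$ versus $(g_\mu)$ distinction is just a matter of admissible gradient range is correct in spirit, but it plays out inside the capacitary estimates and the expansion-of-positivity lemma, not inside a direct iteration on $u$.
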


The following two results are consequences of Theorems \ref{th1.1}, \ref{th1.2}. The first one is the weak Harnack type inequality.
\begin{theorem}\label{th1.3}
Let $u$ be a  weak non-negative super-solution to Eq.\eqref{eq1.1}, let conditions of Theorem \ref{th1.1} be fulfilled. Then for all $\theta \in\left(0, \min\left(1,\frac{t}{t+1}(p-1)\right)\right)$ there holds
\begin{equation*}
\left(\rho^{-n}\int\limits_{B_{\rho}(x_{0})} u^{\theta}(x)\,dx\right)^{\frac{1}{\theta}}
\leqslant \left(t(p-1)-\theta(t+1)\right)^{-\frac{1}{\theta}}\frac{\gamma}{\lambda(\rho)}\times
\qquad\qquad\qquad\qquad\qquad
\qquad\qquad\qquad\qquad
\end{equation*}
\begin{equation}\label{eq1.7}
\times\exp\left(\gamma\,\left[\Lambda\left(x_{0},\frac{\rho}{4}\right)\right]^{\beta_{1}}\right)
\left(\inf\limits_{B_{\frac{\rho}{2}}(x_{0})}u(x) + \rho \right),
\end{equation}
provided that $B_{8\rho}(x_{0})\subset \Omega$.

Likewise, let $u$ be a  weak non-negative super-solution to Eq.\eqref{eq1.1} and let conditions of Theorem \ref{th1.2} be fulfilled. Then for all $\theta \in\left(0, \min\left(1,\frac{t}{t+1}(p-1)\right)\right)$ there holds
\begin{equation*}
\bigg(\rho^{-n}\,\int\limits_{B_{\rho}(x_{0})} u^{\theta}(x)\,dx\bigg)^{\frac{1}{\theta}}
\leqslant \big(t(p-1)-\theta(t+1)\big)^{-\frac{1}{\theta}}\times\qquad\qquad\qquad\qquad\qquad
\qquad\qquad\qquad\qquad
\end{equation*}
\begin{equation}\label{eq1.8}
\times\gamma\exp\left(\gamma\,\left[\mu\left(\frac{\rho}{4}\right)
\Lambda\left(x_{0},\frac{\rho}{4}\right)\right]^{\beta_{1}}\right)\left(\inf\limits_{B_{\frac{\rho}{2}}(x_{0})}u(x) + \rho \right),
\end{equation}
provided that $B_{8\rho}(x_{0})\subset \Omega$. Here $\beta_{1} > 0$ is the number defined in Theorems \ref{th1.1}, \ref{th1.2} and $\gamma >0$ is a constant, depending only on the data.
\end{theorem}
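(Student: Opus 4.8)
The plan is to derive both inequalities \eqref{eq1.7} and \eqref{eq1.8} from Theorems~\ref{th1.1} and~\ref{th1.2} respectively by a Cavalieri (layer--cake) argument applied to the distribution function of $u$. Put $\kappa:=\frac{t+1}{t(p-1)}$, so that the exponent occurring in \eqref{eq1.5}, \eqref{eq1.6} is exactly $\kappa$; since $\frac1\kappa=\frac{t}{t+1}(p-1)$, the admissible range $0<\theta<\min\bigl(1,\tfrac{t}{t+1}(p-1)\bigr)$ lies inside $\{0<\theta<\tfrac1\kappa\}$. Fix $x_{0}$, $\rho$ with $B_{8\rho}(x_{0})\subset\Omega$ and abbreviate, in the setting of Theorem~\ref{th1.1},
\[
\mathcal{N}:=\frac{\gamma}{\lambda(\rho)}\,\exp\Bigl(\gamma\,\bigl[\Lambda(x_{0},\tfrac{\rho}{4})\bigr]^{\beta_{1}}\Bigr)\Bigl(\inf_{B_{\rho/2}(x_{0})}u+\rho\Bigr),
\]
so that \eqref{eq1.5} reads $|E(\rho,m)|\le\rho^{n}(\mathcal{N}/m)^{1/\kappa}$ for every $m>0$, while trivially $|E(\rho,m)|\le\omega_{n}\rho^{n}$.

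First I would write
\[
\int_{B_{\rho}(x_{0})}u^{\theta}\,dx=\theta\int_{0}^{\infty}m^{\theta-1}\,|E(\rho,m)|\,dm
\]
and split the integral at the level $m_{0}:=\omega_{n}^{-\kappa}\mathcal{N}$ at which the two bounds for $|E(\rho,m)|$ agree. On $(0,m_{0})$ the trivial bound gives a contribution $\le\omega_{n}\rho^{n}m_{0}^{\theta}$; on $(m_{0},\infty)$ the decay bound gives
\[
\theta\,\rho^{n}\mathcal{N}^{1/\kappa}\int_{m_{0}}^{\infty}m^{\theta-1-1/\kappa}\,dm=\frac{\theta}{\tfrac1\kappa-\theta}\,\rho^{n}\mathcal{N}^{1/\kappa}m_{0}^{\,\theta-1/\kappa},
\]
the integral converging precisely because $\theta<\tfrac1\kappa$; here the factor $\bigl(\tfrac1\kappa-\theta\bigr)^{-1}=\dfrac{t(p-1)}{t(p-1)-\theta(t+1)}$ is produced. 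Adding the two contributions and substituting $m_{0}=\omega_{n}^{-\kappa}\mathcal{N}$ both terms become comparable to $\rho^{n}\mathcal{N}^{\theta}$ up to a constant depending only on the data and on $\bigl(t(p-1)-\theta(t+1)\bigr)^{-1}$, whence
\[
\rho^{-n}\int_{B_{\rho}(x_{0})}u^{\theta}\,dx\le\frac{\gamma}{t(p-1)-\theta(t+1)}\,\mathcal{N}^{\theta}.
\]
Taking the $(1/\theta)$--th power and recalling the definition of $\mathcal{N}$ yields \eqref{eq1.7}, the genuinely singular factor $\bigl(t(p-1)-\theta(t+1)\bigr)^{-1/\theta}$ being kept explicit and all factors that remain bounded on compact subsets of the admissible $\theta$--interval absorbed into $\gamma$.

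The second assertion follows in the same way, starting now from \eqref{eq1.6} of Theorem~\ref{th1.2}: the only modifications are that the weight $\lambda(\rho)$ disappears from the definition of $\mathcal{N}$ and that $\Lambda(x_{0},\tfrac{\rho}{4})$ is replaced by $\mu(\tfrac{\rho}{4})\,\Lambda(x_{0},\tfrac{\rho}{4})$ in the exponential, which gives \eqref{eq1.8}. The argument is essentially routine once Theorems~\ref{th1.1} and~\ref{th1.2} are in hand; the only points requiring care are the choice of the splitting level $m_{0}$, which is what makes the constant clean, and tracking that the tail integral over $(m_{0},\infty)$ converges exactly in the stated range of $\theta$ and produces the blow--up factor $\bigl(t(p-1)-\theta(t+1)\bigr)^{-1/\theta}$ as $\theta\uparrow\tfrac{t}{t+1}(p-1)$.
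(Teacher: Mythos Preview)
Your proposal is correct and follows essentially the same route as the paper: both use the layer--cake formula $\int u^{\theta}=\theta\int_{0}^{\infty}m^{\theta-1}|E(\rho,m)|\,dm$, split the integral at a level comparable to $\mathcal{N}$ (the paper calls it $\bar m(\rho)$ and splits exactly there, you split at $m_{0}=\omega_{n}^{-\kappa}\mathcal{N}$), apply the trivial measure bound on the low part and the decay bound \eqref{eq1.5} (resp.\ \eqref{eq1.6}) on the tail, and read off the factor $(t(p-1)-\theta(t+1))^{-1}$ from the convergent tail integral. The difference in splitting level is purely cosmetic.
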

Next result is the Harnack inequality.
\begin{theorem}\label{th1.4}Let $u$ be a   non-negative weak solution to Eq. \eqref{eq1.1}, let conditions \eqref{eq1.2}, \eqref{eq1.3}, ($g$),  ($g_{\lambda}$) and ($g_{\mu}$) be fulfilled, then there exist positive constants $\gamma $, $\beta_{2}$, $\beta_{3}$ depending only on the data, such that
\begin{equation}\label{eq1.9}
\sup\limits_{B_{\frac{\rho}{2}}(x_{0})}u(x)\leqslant \frac{\gamma}{\lambda(\rho)}\,\left[\mu\left(\frac{\rho}{4}\right)\right]^{\beta_{2}}\,
\exp\left(\gamma\,\left[\Lambda\left(x_{0},\frac{\rho}{4}\right)\right]^{\beta_{3}}\right)\left(\inf\limits_{B_{\frac{\rho}{2}}(x_{0})}u(x) + \rho \right),
\end{equation}
provided that $B_{8\rho}(x_{0}) \subset \Omega$.

Likewise, let $u$ be a   non-negative weak solution to Eq. \eqref{eq1.1}, let conditions \eqref{eq1.2}, \eqref{eq1.3}, ($g$) and ($g_{\mu}$) be fulfilled, then
\begin{equation}\label{eq1.10}
\sup\limits_{B_{\frac{\rho}{2}}(x_{0})}u(x)\leqslant \gamma\,\exp\left(\gamma\,\left[\mu\left(\frac{\rho}{4}\right)\Lambda\left(x_{0},\frac{\rho}{4}\right)\right]^{\beta_{4}}\right)
\left(\inf\limits_{B_{\frac{\rho}{2}}(x_{0})}u(x) + \rho \right),
\end{equation}
provided that $B_{8\rho}(x_{0}) \subset \Omega$.
\end{theorem}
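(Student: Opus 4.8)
The plan is to derive the Harnack inequality by combining two one‑sided results: the weak Harnack inequality for super‑solutions (Theorem~\ref{th1.3}) and a quantitative local sup‑estimate for sub‑solutions. Since a weak solution is simultaneously a sub‑ and a super‑solution, the two bounds chain together. Accordingly, the first and principal step is to establish the following local boundedness estimate: if $u$ is a non‑negative bounded weak sub‑solution, conditions \eqref{eq1.2}, \eqref{eq1.3}, ($g$) and ($g_\mu$) hold, and $B_{8\rho}(x_0)\subset\Omega$, then for every $\theta\in(0,1)$
\begin{equation*}
\sup\limits_{B_{\rho/2}(x_0)}u \;\leqslant\; \gamma\,\big[\mu(\rho/4)\big]^{\beta_2}\exp\Big(\gamma\,\big[\Lambda(x_0,\rho/4)\big]^{\beta_3}\Big)\Big(\Big(\rho^{-n}\int\limits_{B_\rho(x_0)}u^{\theta}\,dx\Big)^{1/\theta}+\rho\Big),
\end{equation*}
with $\gamma,\beta_2,\beta_3$ depending only on the data (and $\gamma$ in addition on $\theta$).

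To prove this I would run a De Giorgi--Moser iteration adapted to the generalized Orlicz structure. Testing \eqref{eq1.4} with $\varphi=\zeta^{q}(u-k)_{+}$ for cut‑offs $\zeta$ supported in the shrinking balls $B_{\rho_j}(x_0)$, and using \eqref{eq1.2} together with ($g$), yields a Caccioppoli inequality bounding $\int\zeta^{q}H(x,|\nabla u|)\,dx$ over the level set $\{u>k\}$ in terms of $\int H\big(x,|\nabla\zeta|(u-k)_+\big)\,dx$ and a term carrying $b(x)$. The condition ($g$) — which traps $g(x,\cdot)$ between the powers $p-1$ and $q-1$ — together with a Sobolev--Poincar\'e inequality adapted to the generalized Orlicz setting turns this into a reverse‑type inequality on level sets; here ($g_\mu$) is used to replace $g(x,\cdot)$ by its value frozen at $x_0$ at the price of the factor $\mu(\rho_j)\leqslant\mu(\rho/4)$, which is legitimate because $u$ is bounded by $M$ (Remark~\ref{rem1.1}), so ($g_\mu$) applies with $K\sim M$. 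The genuinely non‑uniform character of the problem ($a^{-1}\in L^{t}_{loc}$, $b\in L^{s}_{loc}$ only) is absorbed by H\"older's inequality on the coefficient integrals, which produces the averages making up $\Lambda(x_0,\rho/4)$; the arithmetic condition \eqref{eq1.3} is exactly what guarantees that the Sobolev exponent gain strictly exceeds the H\"older loss, so iterating over $\rho_j=\tfrac{\rho}{2}(1+2^{-j})$ gives geometric convergence. Collecting the accumulated constants yields the bound with a power $[\mu(\rho/4)]^{\beta_2}$ and an exponential $\exp(\gamma[\Lambda]^{\beta_3})$; a standard fast‑geometric‑convergence (interpolation) lemma then lowers the integrability exponent on the right‑hand side down to any $\theta\in(0,1)$.

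With the sub‑solution bound in hand, fix $\theta_{0}:=\tfrac{1}{2}\min\big(1,\tfrac{t}{t+1}(p-1)\big)$, which lies in the admissible range of Theorem~\ref{th1.3} and is a data quantity, so that $(t(p-1)-\theta_0(t+1))^{-1/\theta_0}\leqslant\gamma$. To prove \eqref{eq1.9}: apply the sub‑solution estimate to $u$, then \eqref{eq1.7} (which uses ($g_\lambda$)) to the same $u$ viewed as a super‑solution, both with exponent $\theta_0$; multiplying the two and using $\lambda(\rho)\leqslant1$, $\exp(\gamma[\Lambda]^{\beta})\geqslant1$ and $\rho\leqslant\inf_{B_{\rho/2}(x_0)}u+\rho$ to absorb the stray $\rho$‑term, one obtains \eqref{eq1.9} with $\beta_3$ taken to be the maximum of the exponents occurring on the two factors. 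To prove \eqref{eq1.10}: repeat the argument with \eqref{eq1.8} in place of \eqref{eq1.7} on the super‑solution side (so ($g_\lambda$) is not needed); since $\mu\geqslant1$ and $\Lambda(x_0,\cdot)$ is bounded below by a positive dimensional constant, both the factor $[\mu(\rho/4)]^{\beta_2}$ and $\exp(\gamma[\Lambda(x_0,\rho/4)]^{\beta_3})$ are dominated by $\exp\big(\gamma\,[\mu(\rho/4)\Lambda(x_0,\rho/4)]^{\beta_4}\big)$ for a suitable $\beta_4$, and everything collapses into the single exponential of \eqref{eq1.10}.

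The main obstacle is the sub‑solution estimate of the first step, and specifically the bookkeeping of the dependence on $\mu$: one must verify that $\mu$ enters only through a fixed, bounded number of places in the iteration — essentially the single Sobolev--Poincar\'e freezing at each scale — so that the final bound carries $[\mu(\rho/4)]^{\beta_2}$ with $\beta_2$ a constant rather than an exponent degenerating as $\rho\to0$, and likewise that the H\"older losses are compensated \emph{uniformly} in $j$ by \eqref{eq1.3}. Once this is secured, the passage from the weak Harnack inequality to Theorem~\ref{th1.4} is routine.
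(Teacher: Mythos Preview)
Your proposal is correct and follows the same two-step strategy as the paper: a local sup-estimate for sub-solutions, then the weak Harnack inequality of Theorem~\ref{th1.3}, combined at the exponent $\theta_0=\tfrac{1}{2}\min\big(1,\tfrac{t}{t+1}(p-1)\big)$. The only point to adjust is that the sup-estimate you write down is coarser than what actually comes out of the iteration. The paper's De Giorgi iteration (carried out ``similarly to Section~\ref{Sec4}'', i.e.\ via the energy inequality \eqref{eq2.4} and Lemma~\ref{lem2.2}) yields
\[
\sup\limits_{B_{\rho/2}(x_0)}u \;\leqslant\; \gamma\,\Big[\mu\Big(\tfrac{\rho}{2}\Big)\,\Lambda\Big(x_0,\tfrac{\rho}{2}\Big)\Big]^{1/(\kappa\theta)}\Big(\rho^{-n}\!\int_{B_\rho(x_0)}u^{\theta}\,dx\Big)^{1/\theta}+\gamma\rho,
\]
with only a \emph{power} of $\mu\Lambda$ and no exponential. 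Your worry that $\Lambda$ might accumulate across the iteration to produce an exponential is unfounded: all the shrinking balls $B_{\rho_j}(x_0)$ sit inside a fixed $B_\rho(x_0)$, so the coefficient averages are controlled by a single $\Lambda(x_0,\rho/4)$ throughout, which enters the fast-convergence lemma as one constant and hence only as a power. The exponential in \eqref{eq1.9}--\eqref{eq1.10} comes entirely from the weak Harnack side \eqref{eq1.7}--\eqref{eq1.8}; in \eqref{eq1.9} the factor $[\mu(\rho/4)]^{\beta_2}$ with $\beta_2=1/(\kappa\theta_0)$ survives from the sup-estimate, while the stray $[\Lambda]^{1/(\kappa\theta_0)}$ is absorbed into $\exp\big(\gamma\,[\Lambda]^{\beta_3}\big)$. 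This does not affect the validity of your argument --- your stated bound is simply weaker than necessary --- but it explains why $\mu$ and $\Lambda$ separate cleanly in \eqref{eq1.9}.
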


Before describing the method of proof, a few words about the history of the problem. Qualitative properties of solutions to the corresponding elliptic equations in the standard case, i.e. if $p=q$ are well known since
the famous results by De Giorgi \cite{DeGiorgi}, Nash \cite{Nash}  and Moser \cite{Moser} (we refer the reader to the well-known monograph of Ladyzhenskaya and Ural'tseva  \cite{LadUr}, and  the seminal papers of Serrin \cite{Serrin},  DiBenedetto and Trudinger \cite{DiBTru}). Harnack's inequality for non uniformly elliptic equations has been known since the well-known paper of Trudinger \cite{Tru}.

The study of regularity of minima of functionals with non-standard growth has been initiated by Zhikov
\cite{ZhikIzv1983, ZhikIzv1986, ZhikJMathPh94, ZhikJMathPh9798, ZhikKozlOlein94},
Marcellini \cite{Marcellini1989, Marcellini1991}, and Lieberman \cite{Lieberman91},
and in the last thirty years, the qualitative theory
of second order elliptic equations with so-called log-condition
(i.e. if $\lambda(r) \equiv 1$ and $\mu(r) \equiv 1$) has been actively developed. Moreover, many authors have established local boundedness, Harnack's inequality  and continuity of solutions to such equations, as well as  local minimizers, $Q$-minimizers, and $\omega$-minimizers of the corresponding minimization problems (see, e.g. \cite{ Alhutov97, AlhutovMathSb05, AlhutovKrash04, AlhutovKrash08, AlkhSurnApplAn19, AlkhSurnJmathSci20, AlkhSurnAlgAn19, BarColMing, BarColMingStPt16, BarColMingCalc.Var.18, BenHarHasKarp20, BurchSkrPotAn, ColMing218, ColMing15, ColMingJFunctAn16, DienHarHastRuzVarEpn, Fan1995, FanZhao1999, HadSkrVoi, HarHastOrlicz, HarHastZAn19, HarHastLee18, HarHastToiv17, Krash2002, OkNA20, ShSkrVoit20, SkrVoitUMB19, SkrVoitNA20, SurnPrepr2018 } and references therein).

The case when conditions $(g_{\mu})$ or $(g_{\lambda})$ hold differs substantially from the logarithmic case.
To our knowledge, there are few results in this direction. Zhikov \cite{ZhikPOMI04} obtained a generalization
of the logarithmic condition which guarantees the density of smooth functions in Sobolev space
$W^{1,p(x)}(\Omega)$. Particularly, this result holds if $1< p < p(x)$ and
$$ |p(x)-p(y)|\leqslant \frac{|\log\mu(|x-y|)|}{|\log|x-y||},\quad x,y\in \Omega,\quad
\int\limits_{0}[\mu(r)]^{-\frac{n}{p}}\,\frac{dr}{r}=\infty.$$
Particularly, the function $\mu(r)=\left[\log\frac{1}{r}\right]^{L}$ satisfies the above condition if $L\leqslant \frac{p}{n}$.
Later Zhikov and Pastukhova \cite{ZhikPast2008MatSb} under the same condition proved higher integrability of the
gradient of solutions to the $p(x)$-Laplace equation.

Continuity and Harnack's inequality to the $p(x)$ -
Laplace equation were proved in \cite{AlhutovKrash08, AlkhSurnAlgAn19, SurnPrepr2018}  under the condition $(g_{\mu})$, where $\mu(r)$ satisfies
\begin{equation}\label{eq1.11}
\int\limits_{0}\,\exp\big(-c[\mu(r)]^{\beta}\big)\frac{dr}{r}=\infty,
\end{equation}
with some positive constants $c ,\beta$. Particularly, the function $\mu(r)= \left[\log\log\frac{1}{r}\right]^{L},\, L\,\beta <1$ satisfies the above condition. These results were generalized in \cite{ShSkrVoit20, SkrVoitUMB19, SkrVoitNA20} for a wide class of elliptic
equations with non-logarithmic Orlicz growth, furthermore, Harnack's inequality was proved
in \cite{ShSkrVoit20} under similar conditions. In the proof, the authors used Trudinger's ideas \cite{Tru}.
These results were refined in \cite{HadSkrVoi}, in addition, the Harnack inequality was proved under
the conditions $(g_{\lambda})$, $(g_{\mu})$ and
\begin{equation}\label{eq1.12}
\int\limits_{0}\lambda(r)[\mu(r)]^{-\beta}\frac{dr}{r}=\infty,
\end{equation}
with some $\beta >0$.

In this paper, as already mentioned, we improve previous results, in particular \eqref{eq1.11} and \eqref{eq1.12}. The main difficulty arising in the proof of the main results is related to the so-called theorem
on the expansion of positivity. Roughly speaking, having information on the measure of the
''positivity set'' of u over the ball $B_{r}(\bar{x})$:
$$
|\{x\in B_{r}(\bar{x}) : u(x) \geqslant m \}| \geqslant \alpha(r)\,|B_{r}(\bar{x})|,
$$
with some $r, m >0$ and $\alpha(r) \in(0,1)$, we cannot use the covering argument of Krylov and
Safonov \cite{KrlvSfnv1980}, as was done in the logarithmic case, i.e. if $\alpha$ is independent of $r$ (see e.g. \cite{BarColMing}). Moreover, we cannot use Moser's method, as was done in \cite{ShSkrVoit20, Tru} (it seems that it can be done under condition
$(g_{\mu})$, but not under conditions $(g_{\lambda})$ and $(g_{\mu})$). We
also cannot use the local clustering lemma \cite{DiBGiaVes}, since in this case we will inevitably arrive at an
estimate of the form
$$
\inf\limits_{B_{\rho}(\bar{x})} u \geqslant \gamma\,m\,\exp\left(-\gamma \int\limits_{\bar{r}}^{\rho} [\Lambda(\bar{x},s)]^{\beta}\,\frac{ds}{s}\right),
\quad \bar{r}=\varepsilon\,r\frac{\alpha^{2}(r)}{\mu(r)},
$$
with some positive $\gamma, \varepsilon$ and $\beta$, from which the weak Harnack type inequality, Theorem
\ref{th1.3} does not follows. At the end of the paper we will demonstrate the possibility of using the local clustering lemma, this can only be done under the conditions $(g_{\lambda})$ and   $\Lambda(x_{0}, \rho)\asymp$ const. Note that this proof is valid for the corresponding
$DG_{g,\Lambda}(\Omega)$ De Giorgi  classes (see Section \ref{Sec2} below).

In the present paper we use the workaround that goes back to Mazya \cite{Maz} and Landis papers \cite{Landis_uspehi1963, Landis_mngrph71}, due to which we obtain an analogue of the local clustering lemma, namely, Theorems \ref{th1.1}, \ref{th1.2}. The proof of Theorems \ref{th1.1} and \ref{th1.2} is based on the precise point-wise estimates of solutions of the nonlinear potential type, see Section \ref{Sec3}  below.

The rest of the paper contains the proof of the above theorems. In Section \ref{Sec2}  we collect some auxiliary
propositions and required integral estimates of solutions. Sections \ref{Sec3} and \ref{Sec4} contain the upper
and lower bounds for auxiliary solutions and proof of Theorems \ref{th1.1}, \ref{th1.2}.
Finally, in Section \ref{Sec5} we give a proof of   Harnack type  inequalities,
Theorems \ref{th1.3}, \ref{th1.4}.



\section{Auxiliary material and integral estimates}\label{Sec2}

\subsection{ Auxiliary Lemmas}\label{subsect2.1}

The following  lemma will be used in the sequel, it is  the well-known De\,Giorgi-Poincar\'{e} Lemma (see \cite[Chap.\,2]{LadUr}).

\begin{lemma}\label{lem2.1}
{\it Let $u \in W^{1,1}(B_{r}(y))$ for some $r > 0$, and $y \in \mathbb{R}^{n}$ . Let $k, l$ be real
numbers such that $k < l$. Then there exists a constant $\gamma$ depending only on $n$ such that
\begin{equation*}
(l-k) |A_{k,r}|\cdot|B_{r}(y)\setminus A_{l,r}| \leqslant \gamma r^{n+1} \int\limits_{A_{l,r}\setminus A_{k,r}} |\nabla u|\,\, dx,
\end{equation*}
where $A_{k,r} = B_{r}(y)\cap \{u < k\}$.
}
\end{lemma}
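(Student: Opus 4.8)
The statement to prove is the De\,Giorgi--Poincar\'e Lemma, so here is how I would approach it.

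\medskip

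The plan is to reduce the problem to a one–dimensional estimate along segments and then integrate over directions, which is the classical route for this kind of inequality. First I would assume, by a standard density argument, that $u\in C^{1}(\overline{B_{r}(y)})$, the general case following by approximation since both sides of the claimed inequality are continuous with respect to $W^{1,1}$–convergence (truncations and the sets $A_{k,r}$, $A_{l,r}$ behave well). Fix $x\in A_{k,r}$ and $z\in B_{r}(y)\setminus A_{l,r}$, so that $u(x)<k$ and $u(z)\geqslant l$; along the segment joining $x$ and $z$ the function $u$ must increase by at least $l-k$, and since $B_{r}(y)$ is convex this whole segment lies in the ball. Hence
\begin{equation*}
l-k \;\leqslant\; u(z)-u(x) \;\leqslant\; \int_{[x,z]} |\nabla u|\,d\mathcal{H}^{1}.
\end{equation*}
Moreover, on the parts of the segment where $u<k$ or $u\geqslant l$ the contribution can only help, but the key point is that the net increase of $l-k$ must be realized, so in fact we may restrict the line integral to the portion of $[x,z]$ where $k\leqslant u < l$ — that is, we may integrate $|\nabla u|\,\chi_{A_{l,r}\setminus A_{k,r}}$ along the segment and still bound $l-k$ from above. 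This is the observation that produces the set $A_{l,r}\setminus A_{k,r}$ in the final estimate.

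\medskip

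Next I would integrate this pointwise inequality. Integrating in $x$ over $A_{k,r}$ and in $z$ over $B_{r}(y)\setminus A_{l,r}$ gives
\begin{equation*}
(l-k)\,|A_{k,r}|\cdot|B_{r}(y)\setminus A_{l,r}|
\;\leqslant\;
\int_{A_{k,r}}\!\int_{B_{r}(y)\setminus A_{l,r}} \left(\int_{[x,z]} |\nabla u(w)|\,\chi_{A_{l,r}\setminus A_{k,r}}(w)\,d\mathcal{H}^{1}(w)\right) dz\,dx.
\end{equation*}
Now the standard device is to write the segment integral in polar-type coordinates centered at $x$: with $w=x+\rho\omega$, $\rho\in(0,|z-x|)$, $\omega\in\mathbb{S}^{n-1}$, and then to bound the inner double integral by extending both $x$ and $z$ to range over all of $B_{r}(y)$ and using that $|z-x|\leqslant 2r$. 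The resulting iterated integral, after changing the order of integration and performing the elementary one–dimensional integration in the radial variable (which contributes a factor controlled by $r$) and the estimate $\int_{\{|z-w|\le 2r\}}|z-w|^{1-n}\,dz \le \gamma(n)\,r$, collapses to
\begin{equation*}
\gamma(n)\, r^{n+1}\int_{A_{l,r}\setminus A_{k,r}} |\nabla u(w)|\,dw,
\end{equation*}
the extra $r^{n}$ coming from the free integration of the second endpoint over the ball. Collecting the constants yields the claim with $\gamma$ depending only on $n$.

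\medskip

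The main obstacle, and the only place where one must be slightly careful, is the bookkeeping in the change of variables: one has to justify passing from $\int_{A_{k,r}}\int_{B_{r}(y)\setminus A_{l,r}}\int_{[x,z]}(\cdots)$ to a clean convolution-type bound $\int |\nabla u|(w)\,\chi_{A_{l,r}\setminus A_{k,r}}(w)\big(\int |z-w|^{1-n}\,dz\big)\big(\int 1\,dx\big)\,dw$ by Fubini, verifying that the singular kernel $|z-w|^{1-n}$ is integrable over $B_{r}(y)$ (it is, with integral $\le \gamma(n)r$), and keeping track of which variable supplies which power of $r$. Everything else is the convexity of the ball and the fundamental theorem of calculus along a segment; no structure of the equation \eqref{eq1.1} enters, as the lemma is purely about Sobolev functions.
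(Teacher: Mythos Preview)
The paper does not actually prove this lemma: it is stated as the well-known De\,Giorgi--Poincar\'e inequality and attributed to \cite[Chap.\,2]{LadUr} without argument. Your sketch reproduces precisely the classical proof found there (and in De~Giorgi's original work): truncate $u$ between $k$ and $l$, use the fundamental theorem of calculus along the segment $[x,z]\subset B_r(y)$ to get $l-k\le \int_{[x,z]}|\nabla u|\,\chi_{A_{l,r}\setminus A_{k,r}}$, integrate in both endpoints, and reduce via polar coordinates to a Riesz-potential estimate. So your approach is the standard one and is correct in substance.

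One small correction to your final bookkeeping paragraph: the way the powers of $r$ actually appear is not ``$\int|z-w|^{1-n}\,dz\le\gamma r$ times a free $\int_{A_{k,r}}1\,dx\le\gamma r^n$''. Writing $z=x+s\omega$ in polar coordinates about $x$ and performing the $s$-integration first gives the factor $(2r)^n/n$; what remains is $\int_{S^{n-1}}\int_0^{2r} f(x+\rho\omega)\,d\rho\,d\omega=\int f(w)\,|w-x|^{1-n}\,dw$, and it is the subsequent integration in $x\in A_{k,r}$ against the Riesz kernel $|w-x|^{1-n}$ that produces the remaining factor $\gamma r$. In other words, the singular kernel lives in the $x$-variable (the centre of the polar coordinates), not the $z$-variable, and there is no ``free'' endpoint integration. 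With this adjustment your computation closes exactly as you describe.
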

The following lemma can be also found in \cite[Chap.\,2]{LadUr}.

\begin{lemma}\label{lem2.2}
{\it Let $\{y_{j}\}_{j\in \mathbb{N}}$  be a sequence of non-negative numbers such that for
any $j = 0, 1, 2,...$ the inequality
$$
y_{j+1} \leqslant \gamma c^{j} y_{j}^{1+\varepsilon}
$$
holds with some $\varepsilon,\gamma >0 , c\geqslant 1$. Then the following estimate is true
$$
y_{j} \leqslant \gamma^{\frac{(1+\varepsilon)^{j}-1}{\varepsilon}} c^{\frac{(1+\varepsilon)^{j}-1}{\varepsilon^{2}}-\frac{j}{\varepsilon}}
y_{0}^{(1+\varepsilon)^{j}}.
$$
Particularly, if $y_{0}\leqslant \gamma^{-\frac{1}{\varepsilon}} c^{-\frac{1}{\varepsilon^{2}}},$
then $\lim\limits_{j \rightarrow +\infty} y_{j} =0$.
}
\end{lemma}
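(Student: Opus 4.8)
The plan is to prove the explicit estimate by induction on $j$, and then obtain the vanishing statement by substituting the smallness hypothesis on $y_{0}$ into it. It is convenient to set $a_{j}:=\frac{(1+\varepsilon)^{j}-1}{\varepsilon}$ and $b_{j}:=\frac{(1+\varepsilon)^{j}-1}{\varepsilon^{2}}-\frac{j}{\varepsilon}$, so that the asserted bound reads $y_{j}\leqslant \gamma^{a_{j}}\,c^{\,b_{j}}\,y_{0}^{(1+\varepsilon)^{j}}$. The base case $j=0$ is immediate, since $a_{0}=b_{0}=0$ and the inequality degenerates to $y_{0}\leqslant y_{0}$.

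For the inductive step, assume the bound at level $j$. Inserting it into the hypothesis $y_{j+1}\leqslant \gamma c^{j}y_{j}^{1+\varepsilon}$ after raising to the power $1+\varepsilon$ gives
\[
y_{j+1}\leqslant \gamma c^{j}\bigl(\gamma^{a_{j}}c^{\,b_{j}}y_{0}^{(1+\varepsilon)^{j}}\bigr)^{1+\varepsilon}=\gamma^{\,1+(1+\varepsilon)a_{j}}\,c^{\,j+(1+\varepsilon)b_{j}}\,y_{0}^{(1+\varepsilon)^{j+1}},
\]
so the step reduces to the two elementary identities $1+(1+\varepsilon)a_{j}=a_{j+1}$ and $j+(1+\varepsilon)b_{j}=b_{j+1}$; the first follows from $\varepsilon+(1+\varepsilon)^{j+1}-(1+\varepsilon)=(1+\varepsilon)^{j+1}-1$, the second from the simplifications $1-\frac{1+\varepsilon}{\varepsilon}=-\frac{1}{\varepsilon}$ and $\frac{(1+\varepsilon)^{j+1}-(1+\varepsilon)}{\varepsilon^{2}}=\frac{(1+\varepsilon)^{j+1}-1}{\varepsilon^{2}}-\frac{1}{\varepsilon}$. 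This closes the induction. (Equivalently one may set $\eta_{j}:=\log y_{j}$, observe that the hypothesis becomes the linear recursion $\eta_{j+1}\leqslant \log\gamma+j\log c+(1+\varepsilon)\eta_{j}$, and solve it explicitly; this is where the closed form for the exponents comes from.)

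Finally, if $y_{0}\leqslant \gamma^{-1/\varepsilon}c^{-1/\varepsilon^{2}}$, then $y_{0}^{(1+\varepsilon)^{j}}\leqslant \gamma^{-(1+\varepsilon)^{j}/\varepsilon}c^{-(1+\varepsilon)^{j}/\varepsilon^{2}}$, and substituting this into the estimate just proved, the powers of $\gamma$ and of $c$ collapse to
\[
y_{j}\leqslant \gamma^{\,a_{j}-(1+\varepsilon)^{j}/\varepsilon}\,c^{\,b_{j}-(1+\varepsilon)^{j}/\varepsilon^{2}}=\gamma^{-1/\varepsilon}\,c^{\,-1/\varepsilon^{2}-j/\varepsilon},
\]
which tends to $0$ as $j\to\infty$ when $c>1$ (the case relevant for the applications in this paper); if the hypothesis on $y_{0}$ holds with strict inequality, the factor $y_{0}^{(1+\varepsilon)^{j}}$ by itself forces $y_{j}\to0$ for any $c\geqslant1$. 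I do not anticipate a genuine obstacle: the only delicate point is the bookkeeping of the two exponent recursions $a_{j+1}=1+(1+\varepsilon)a_{j}$, $b_{j+1}=j+(1+\varepsilon)b_{j}$, and everything else is direct substitution.
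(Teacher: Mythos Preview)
Your induction argument is correct and is the standard proof of this iteration lemma; the paper itself does not supply a proof but simply cites \cite[Chap.\,2]{LadUr}, where the same argument appears. Your observation about the borderline case $c=1$ with equality in the smallness assumption is accurate (the sequence is then only bounded, not necessarily vanishing), but this edge case never arises in the paper's applications, where $c>1$ throughout.
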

The folowing lemma can be found in \cite{Lieberman91}.
\begin{lemma}\label{lem2.3}
Let $Q$ be an increasing, convex, continuous function such that
$Q(0)=0$ and $\frac{1}{s}Q(s)\leqslant \frac{1}{t} Q(t)$  for $0<s\leqslant t.$ If $u\in W^{1,1}(\Omega)$ with $u=0$ on $\partial\Omega$ and $\int\limits_{\Omega} Q(|\nabla u|) dx$ finite, and $R=\rm{diam}\,\, \Omega$  then
$$
\int\limits_{\Omega} Q\left(\frac{u}{R}\right) dx\leqslant  \int\limits_{\Omega} Q\left(|\nabla u|\right) dx.
$$
\end{lemma}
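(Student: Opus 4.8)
The plan is to reduce the statement to a one-dimensional estimate along rays, exploiting the hypotheses that $Q$ is convex with $Q(0)=0$ and that $s\mapsto Q(s)/s$ is non-decreasing. First I would recall the elementary consequence of the latter two properties: for $0\le\sigma\le 1$ and $t\ge 0$ one has $Q(\sigma t)\le\sigma\,Q(t)$ (subhomogeneity of $Q$ below scale $1$), which follows by writing $Q(\sigma t)=\sigma t\cdot\frac{Q(\sigma t)}{\sigma t}\le \sigma t\cdot\frac{Q(t)}{t}$. This is the only structural fact about $Q$ that will be used besides monotonicity and convexity.

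Next I would extend $u$ by zero outside $\Omega$ (legitimate since $u=0$ on $\partial\Omega$ and $u\in W^{1,1}_0$), so that $u$ is defined on all of $\mathbb R^n$ with $u\equiv0$ off $\Omega$, and $\Omega\subset B_R(x)$ for every $x\in\Omega$ since $R=\operatorname{diam}\Omega$. Fix a point $x\in\Omega$ and, for each direction $\omega\in S^{n-1}$, consider the ray $x+\tau\omega$, $\tau>0$. Since $u$ vanishes for $\tau$ large (indeed once $x+\tau\omega\notin\Omega$, which happens for some $\tau\le R$ after the last exit, and in any case $u$ is $0$ outside a bounded set), the fundamental theorem of calculus on each ray gives
\begin{equation*}
|u(x)| \le \int_{0}^{\infty} |\nabla u(x+\tau\omega)|\,d\tau .
\end{equation*}
Averaging this over $\omega\in S^{n-1}$ and passing to Cartesian coordinates $y=x+\tau\omega$ (so $d\tau\,d\omega = |x-y|^{1-n}\,dy$) yields the Riesz-potential bound
\begin{equation*}
|u(x)| \le \frac{1}{|S^{n-1}|}\int_{\Omega}\frac{|\nabla u(y)|}{|x-y|^{n-1}}\,dy .
\end{equation*}

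Now I would apply $Q$ to $u(x)/R$ and use Jensen's inequality with respect to a suitable probability measure to push $Q$ inside the integral. The natural choice is to regard the ray representation as an average: from $|u(x)|/R \le R^{-1}\int_0^\infty|\nabla u(x+\tau\omega)|\,d\tau$ averaged over $\omega$, and noting $\tau$ effectively ranges over an interval of length $\le R$, write $u(x)/R$ as an average of $|\nabla u|$ against the probability measure $\frac{d\tau\,d\omega}{R|S^{n-1}|}$ on $(0,R)\times S^{n-1}$ (after first observing the integrand is supported where $x+\tau\omega\in\Omega$, hence $\tau<R$); then Jensen gives
\begin{equation*}
Q\!\left(\frac{u(x)}{R}\right) \le \frac{1}{R|S^{n-1}|}\int_{S^{n-1}}\int_{0}^{R} Q\big(|\nabla u(x+\tau\omega)|\big)\,d\tau\,d\omega
= \frac{1}{R|S^{n-1}|}\int_{\Omega}\frac{Q(|\nabla u(y)|)}{|x-y|^{n-1}}\,dy.
\end{equation*}
Finally I would integrate this in $x$ over $\Omega$, swap the order of integration by Tonelli, and use the uniform bound $\int_{\Omega}|x-y|^{1-n}\,dx \le \int_{B_R(y)}|x-y|^{1-n}\,dx = |S^{n-1}|\int_0^R \tau^{1-n}\tau^{n-1}\,d\tau = |S^{n-1}|\,R$, which exactly cancels the prefactor $(R|S^{n-1}|)^{-1}$ and delivers $\int_\Omega Q(u/R)\,dx\le\int_\Omega Q(|\nabla u|)\,dx$.

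The step I expect to be the main obstacle is the rigorous justification of the pointwise ray representation and the application of Jensen's inequality for a general $W^{1,1}_0$ function: one must either run the argument first for $u\in C^\infty_c(\Omega)$ (or Lipschitz) and then pass to the limit using density and the monotone/convexity properties of $Q$ (together with $\int_\Omega Q(|\nabla u|)\,dx<\infty$ to control the approximation), or invoke an absolute-continuity-on-lines argument directly. A secondary subtlety is that the "interval of length $\le R$" in the Jensen step must be handled carefully — one should use that for fixed $x,\omega$ the set $\{\tau>0: x+\tau\omega\in\Omega\}$ is contained in an interval of length $\operatorname{diam}\Omega=R$, so that extending the $\tau$-integral to $(0,R)$ and normalizing by $R$ is valid; the subhomogeneity $Q(\sigma t)\le\sigma Q(t)$ recorded at the outset is what lets us absorb the possible shortfall when the actual support has length strictly less than $R$.
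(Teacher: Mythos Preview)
The paper does not prove this lemma; it simply attributes it to Lieberman \cite{Lieberman91} and moves on. Your argument via the ray/Riesz-potential representation, Jensen's inequality against the probability measure $\dfrac{d\tau\,d\omega}{R\,|S^{n-1}|}$ on $(0,R)\times S^{n-1}$, and Tonelli together with the elementary bound $\int_{\Omega}|x-y|^{1-n}\,dx\le |S^{n-1}|\,R$ is correct and is the standard route to this inequality.

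One small comment: the subhomogeneity $Q(\sigma t)\le\sigma Q(t)$ that you isolate at the outset is not actually needed in the proof as you have written it. Once $u$ is extended by zero outside $\Omega$, the integrand $|\nabla u(x+\tau\omega)|$ vanishes for $\tau>R$ (because $|x-(x+\tau\omega)|=\tau>R=\operatorname{diam}\Omega$ forces $x+\tau\omega\notin\Omega$), so the $\tau$-integral over $(0,\infty)$ equals that over $(0,R)$, and Jensen applies directly to the probability measure with no ``shortfall'' to absorb. In fact, convexity together with $Q(0)=0$ already implies that $s\mapsto Q(s)/s$ is nondecreasing, so that hypothesis in the lemma statement is redundant anyway. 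The genuine technical point you flag --- justifying the pointwise ray representation for a general $W^{1,1}_{0}$ function --- is real, and your proposed cure (argue first for $C_{c}^{\infty}$ and pass to the limit via convexity and Fatou, or invoke absolute continuity on lines) is the correct one.
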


\subsection{ Local energy estimates}\label{subsect2.2}

The following inequality is a simple analogue of Young's inequality

\begin{equation}\label{eq2.1}
g(x, a) b \leqslant \varepsilon a g(x, a) + \max(\varepsilon^{1-p},\varepsilon^{1-q}) b g(x, b),\quad \varepsilon, a, b >0,\quad x\in \Omega,
\end{equation}
indeed, if $b\leqslant \varepsilon a$, then $g (x, a) b \leqslant \varepsilon a g (x, a)$, and if $b \geqslant \varepsilon a$, then by condition $(g)$\quad $g (x, a) b \leqslant\\ \leqslant \max(\varepsilon^{1-p},\varepsilon^{1-q}) b g (x, b)$.

The following lemma, namely inequalities \eqref{eq2.2} and \eqref{eq2.3}, can be used as a definition of the corresponding $DG_{g,\Lambda}(\Omega)$ De Giorgi classes, we refer the reader to \cite{SkrVoitUMB19, SkrVoitNA20} for the case $\Lambda(x_{0}, r) \asymp$ const, $r>0$.

\begin{lemma}\label{lem2.4}
{\it Let $u$ be bounded local weak solution to \eqref{eq1.1}, then for any $B_{8r}(x_{0}) \subset \Omega$, any $k < l$, any $\sigma \in (0,1)$ and any $\zeta(x) \in C_{0}^{\infty}(B_{r}(x_{0})), \zeta(x)=1$ in $B_{(1-\sigma)r}(x_{0})$, $0\leqslant \zeta(x)\leqslant 1$, $|\nabla \zeta(x) | \leqslant \frac{1}{\sigma\,r}$ next inequalities hold
\begin{multline}\label{eq2.2}
\int\limits_{A^{+}_{k,r}\setminus A^{+}_{l,r}}
|\nabla u|\,\zeta^{q}(x)\, dx\leqslant
\gamma \,\sigma^{-q}\,\left(\frac{g^{+}_{B_{r}(x_{0})}\left(\frac{M_{+}(k,r)}{r}\right)}{g^{-}_{B_{r}(x_{0})}
\left(\frac{M_{+}(k,r)}{r}\right)}\right)^{\frac{1}{p}}\,M_{+}(k,r)r^{n-1}\times \\ \times\Lambda(x_{0},r)\left(\frac{|A^{+}_{k,r}\setminus A^{+}_{l,r}|}{|B_{r}(x_{0})|}\right)^{\left(1-\frac{1}
{t(p-1)}\right)\left(1-\frac{1}{p}\right)}\left(\frac{|A^{+}_{k,r}|}{|B_{r}(x_{0})|}\right)^{\left(1-\frac{1}{s}\right)\frac{1}{q}},
\end{multline}
\begin{multline}\label{eq2.3}
\int\limits_{A^{-}_{l,r}\setminus A^{-}_{k,r}}
|\nabla u|\,\zeta^{q}(x)\, dx\leqslant
\gamma \,\sigma^{-q}\,\left(\frac{g^{+}_{B_{r}(x_{0})}\left(\frac{M_{-}(l,r)}{r}\right)}{g^{-}_{B_{r}(x_{0})}
\left(\frac{M_{-}(l,r)}{r}\right)}\right)^{\frac{1}{p}}\,M_{-}(l,r)\,r^{n-1}\times \\ \times\Lambda(x_{0},r)\bigg(\frac{|A^{-}_{l,r}\setminus A^{-}_{k,r}|}{|B_{r}(x_{0})|}\bigg)^{\left(1-\frac{1}
{t(p-1)}\right)\left(1-\frac{1}{p}\right)}\bigg(\frac{|A^{-}_{l,r}|}{|B_{r}(x_{0})|}\bigg)^{\left(1-\frac{1}{s}\right)\frac{1}{q}},
\end{multline}
\begin{multline}\label{eq2.4}
\int\limits_{A^{\pm}_{k,r}}
|\nabla u|\,\zeta^{q}(x)\, dx\leqslant\gamma \,\sigma^{-q}\left(\frac{g^{+}_{B_{r}(x_{0})}\left(\frac{M_{\pm}(k,r)}{r}\right)}{g^{-}_{B_{r}(x_{0})}
\left(\frac{M_{\pm}(k,r)}{r}\right)}\right)^{\frac{1}{p}}\times\\ \times M_{\pm}(k,r)\,r^{n-1}\,\Lambda(x_{0},r)\,\left(\frac{|A^{\pm}_{k,r}|}
{|B_{r}(x_{0})|}\right)^{\left(1-\frac{1}{t(p-1)}\right)\left(1-\frac{1}{p}\right)+\left(1-\frac{1}{s}\right)\frac{1}{q}},
\end{multline}
here $(u-k)_{\pm}:=\max \{\pm(u-k),0 \}$, $A^{\pm}_{k,r}:=B_{r}(x_{0})\cap \{(u-k)_{\pm} >0 \}$, $M_{\pm}(k,r):=\esssup\limits_{B_{r}(x_{0})} (u-k)_{\pm},$ $g^{+}_{B_{r}(x_{0})}(v):=\max\limits_{x\in B_{r}(x_{0})}g(x, v),\quad g^{-}_{B_{r}(x_{0})}(v):=\min\limits_{x\in B_{r}(x_{0})}g(x, v),\quad v>0.$
}
\end{lemma}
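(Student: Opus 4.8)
The plan is to test the weak formulation \eqref{eq1.4} with a suitable truncation of $u$ multiplied by a power of the cutoff $\zeta$, and then carefully estimate the resulting terms using the structure conditions \eqref{eq1.2}, the $\Delta_2$-type condition $(g)$, and the integrability assumptions \eqref{eq1.3}. Concretely, for \eqref{eq2.2} I would take as test function $\varphi = \min\{(u-k)_+,\, l-k\}\,\zeta^q$ (a bounded, nonnegative element of $W_0(\Omega)$ since $u$ is bounded). On the set $A^+_{k,r}\setminus A^+_{l,r}$ the gradient of $\varphi$ contributes $|\nabla u|^2 \zeta^q$ times $H(x,|\nabla u|)/|\nabla u|^2 = h(x,|\nabla u|)/|\nabla u|$, producing a good term $\int h(x,|\nabla u|)|\nabla u|^{-1}|\nabla u|\,\zeta^q\,dx$ that dominates, via the lower bound in \eqref{eq1.2} and condition $(g)$, a quantity comparable to $\int_{A^+_{k,r}\setminus A^+_{l,r}} a(x)\, g(x,|\nabla u|)\,\zeta^q\,dx$; the remaining term involves $\nabla\zeta$ and is estimated by the upper bound in \eqref{eq1.2}.

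The heart of the argument is then a Young-type splitting: I would apply inequality \eqref{eq2.1} with $a = |\nabla u|$ and $b$ of the form $(\text{const})\cdot M_+(k,r)/(\sigma r)$ (coming from the $|\nabla\zeta|$ factor), choosing the free parameter $\varepsilon$ small so that the term $\varepsilon\, a\, g(x,a)$ is absorbed into the good term on the left. What survives is an integral of $b\, g(x,b)$ over $A^+_{k,r}$, i.e. essentially $M_+(k,r)(\sigma r)^{-1} g(x, M_+(k,r)/(\sigma r))$ integrated against $b(x)$, while the absorbed good term still controls $\int a(x) g(x,|\nabla u|)\zeta^q\,dx$. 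To recover $\int |\nabla u|\,\zeta^q\,dx$ from this I would use $(g)$ once more to write $|\nabla u| \le |\nabla u|\, g(x,|\nabla u|)\, [g(x, M_+/r)]^{-1} \cdot (\text{scaling factor})$ on the relevant range, and then apply Hölder's inequality in the form dictated by \eqref{eq1.3}: split $1 = \frac{1}{t}\cdot\frac{1}{p-1}\cdot(\dots) + \frac{1}{s}\cdot\frac{1}{q} + (\text{remainder})$ so that the $L^t$ norm of $a^{-1}$ and the $L^s$ norm of $b$ combine into the factor $\Lambda(x_0,r)$, and the leftover exponents on $|A^+_{k,r}\setminus A^+_{l,r}|$ and $|A^+_{k,r}|$ come out exactly as the exponents $\left(1-\frac{1}{t(p-1)}\right)\left(1-\frac1p\right)$ and $\left(1-\frac1s\right)\frac1q$ in the statement. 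The ratio $\bigl(g^+_{B_r}/g^-_{B_r}\bigr)^{1/p}$ enters precisely because one must pass between $g(x,\cdot)$ at different points $x$ inside $B_r(x_0)$ when extracting the pointwise factor $M_+(k,r)/r$ out of the integral.

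Inequality \eqref{eq2.3} is proved identically with $u$ replaced by $-u$ (so that $(u-k)_+$ becomes $(u-l)_-$ and the roles of $k,l$ are swapped), using that $u$ is a solution — hence both a sub- and a super-solution — so the sign in \eqref{eq1.4} is immaterial. Inequality \eqref{eq2.4} follows from \eqref{eq2.2} (resp. \eqref{eq2.3}) by letting $l\to+\infty$ (resp. $l\to-\infty$), or by running the same computation directly with the unbounded truncation $(u-k)_\pm\,\zeta^q$, noting $M_\pm(k,r)<\infty$ by boundedness of $u$; the two measure factors then merge into the single set $A^\pm_{k,r}$ with the summed exponent. The main obstacle I anticipate is bookkeeping the exponents through the Hölder step so that they assemble exactly into $\Lambda(x_0,r)$ and the stated powers — in particular checking that the constraint $t>\max\bigl(1,\frac{q-p+1}{p-1}\bigr)$ is precisely what makes the exponent $\left(1-\frac{1}{t(p-1)}\right)\left(1-\frac1p\right)$ positive and the whole splitting admissible — together with the care needed in choosing $\varepsilon$ uniformly (depending only on the data, via $C(M)$ and $(g)$) so that the absorption is legitimate.
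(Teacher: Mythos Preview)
Your overall plan is correct and coincides with the paper's: test \eqref{eq1.4} with $(u-k)_+\zeta^q$ (the paper uses the untruncated version, but your truncation works equally well), obtain the Caccioppoli bound
\[
\int_{A^+_{k,r}} a(x)\,G(x,|\nabla u|)\,\zeta^q\,dx \;\le\; \gamma\,\sigma^{-q}\,\frac{M_+(k,r)}{r}\,g^+_{B_r(x_0)}\!\left(\frac{M_+(k,r)}{r}\right)\int_{A^+_{k,r}} b(x)\,dx,
\]
and then pass to $\int_{A^+_{k,r}\setminus A^+_{l,r}}|\nabla u|\,\zeta^q\,dx$. Your remarks on \eqref{eq2.3} (replace $u$ by $-u$) and \eqref{eq2.4} (send $l\to\pm\infty$) are also correct.

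The one step where your outline is genuinely incomplete is the passage from $\int a\,G(x,|\nabla u|)$ back to $\int|\nabla u|$. Your proposed pointwise bound $|\nabla u|\le |\nabla u|\,g(x,|\nabla u|)\,[g(x,M_+/r)]^{-1}$ holds only where $|\nabla u|\ge M_+/r$, and more seriously it drops the weight $a(x)$: you end up controlling $\int G(x,|\nabla u|)$ rather than $\int a\,G(x,|\nabla u|)$, and a \emph{single} H\"older splitting will not produce the two separate measure factors in \eqref{eq2.2}. The paper's device is to apply \eqref{eq2.1} a second time with the roles \emph{reversed} --- namely $a=M_+(k,r)/r$, $b=|\nabla u|$ --- and with an \emph{$x$-dependent} parameter, replacing $\varepsilon$ by $\varepsilon^{-1}a_1(x)$ (recall $a_1=1+a^{-1}$). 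Dividing through by $g^-_{B_r}(M_+/r)$ this yields two terms,
\[
\varepsilon^{-1}\,\frac{M_+}{r}\int_{A^+_{k,r}\setminus A^+_{l,r}}[a_1(x)]^{\frac{1}{p-1}}\,dx
\qquad\text{and}\qquad
\frac{\max(\varepsilon^{p-1},\varepsilon^{q-1})}{g^-_{B_r}(M_+/r)}\int_{A^+_{k,r}} a(x)\,G(x,|\nabla u|)\,\zeta^q\,dx.
\]
H\"older with exponent $t(p-1)$ on the first term gives the factor $\bigl(|A^+_{k,r}\setminus A^+_{l,r}|/|B_r|\bigr)^{1-\frac{1}{t(p-1)}}$ together with the $a_1$-average; inserting the Caccioppoli bound and then H\"older with exponent $s$ on the second term gives $\bigl(|A^+_{k,r}|/|B_r|\bigr)^{1-\frac{1}{s}}$ and the $b$-average. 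One then \emph{optimizes} over the remaining scalar $\varepsilon$, setting $\varepsilon^p$ or $\varepsilon^q$ equal to the ratio of the two resulting expressions according to whether that ratio is $\le 1$ or $>1$; it is precisely this optimization that yields the exponent $\tfrac{1}{p}$ on $g^+/g^-$ and assembles the two-summand structure of $\Lambda(x_0,r)$. Once you insert this reversed, $x$-dependent application of \eqref{eq2.1} and the optimization step, your argument goes through.
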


\begin{proof}
Test identity \eqref{eq1.4} by $\varphi = (u-k)_{+} \zeta^{q}(x),$ by conditions \eqref{eq1.2} we obtain
\begin{multline*}
\int\limits_{A^{+}_{k,r}} a(x) G(x, | \nabla u|) \zeta^{q}(x) dx \leqslant \gamma \int\limits_{A^{+}_{k,r}} b(x) G\bigg(x, \frac{(u-k)_{+}}{\sigma\,r}\bigg) dx\leqslant\\ \leqslant \gamma \sigma^{-q}\frac{M_{+}(k,r)}{r}g^{+}_{B_{r}(x_{0})}\bigg(\frac{M_{+}(k,r)}{r}\bigg)\int\limits_{A^{+}_{k,r}} b(x) dx,\quad G(x, v):=\int\limits_{0}^{v} g(x, s)\,ds, \quad v>0,
\end{multline*}
and use inequality \eqref{eq2.1} with $ a=\dfrac{M_{+}(k,r)}{r}$, $b= | \nabla u|$ and $\varepsilon $ replaced by $\varepsilon^{-1} a_{1}(x)$, $\varepsilon>0$, integrating over the set $A^{+}_{k,r}\setminus A^{+}_{l,r}$, using condition ($g$)  we arrive at

\begin{multline*}
\int\limits_{A^{+}_{k,r}\setminus A^{+}_{l,r}}
|\nabla u|\,\zeta^{q}(x)\, dx=\int\limits_{A^{+}_{k,r}\setminus A^{+}_{l,r}}\frac{g^{-}_{B_{r}(x_{0})}\big(\frac{M_{+}(k,r)}{r}\big)}{g^{-}_{B_{r}(x_{0})}\big(\frac{M_{+}(k,r)}{r}\big)} |\nabla u|\,\zeta^{q}(x)\, dx\leqslant\qquad\qquad\qquad\qquad\qquad\qquad\\
 \leqslant \varepsilon^{-1}\frac{M_{+}(k,r)}{r}\int\limits_{A^{+}_{k,r}\setminus A^{+}_{l,r}}
[a_{1}(x)]^{\frac{1}{p-1}}dx+ \frac{\max(\varepsilon^{p-1},\varepsilon^{q-1})}{g^{-}_{B_{r}(x_{0})}\big(\frac{M_{+}(k,r)}{r}\big)}\int\limits_{A^{+}_{k,r}} a(x) G(x, | \nabla u|) \zeta^{q}(x) dx\leqslant \\
\leqslant \varepsilon^{-1}\frac{M_{+}(k,r)}{r}\int\limits_{A^{+}_{k,r}\setminus A^{+}_{l,r}}[a_{1}(x)]^{\frac{1}{p-1}}dx+ \qquad\qquad\qquad\qquad\qquad\qquad\qquad\qquad\qquad\\
 + \gamma\,\sigma^{-q} \max(\varepsilon^{p-1},\varepsilon^{q-1})\frac{M_{+}(k,r)}{r}\frac{g^{+}_{B_{r}(x_{0})}
\big(\frac{M_{+}(k,r)}{r}\big)}{g^{-}_{B_{r}(x_{0})}\big(\frac{M_{+}(k,r)}{r}\big)}\int\limits_{A^{+}_{k,r}}b(x)dx \leqslant \qquad\qquad\\
 \leqslant \gamma\,\sigma^{-q}\, M_{+}(k,r) r^{n-1} \left\{\varepsilon^{-1}\left(r^{-n}\int\limits_{B_{r}(x_{0})}a^{t}_{1}(x)dx\right)^{\frac{1}{t(p-1)}}\left(\frac{|A^{+}_{k,r}\setminus A^{+}_{l,r}|}{|B_{r}(x_{0})|}\right)^{1-\frac{1}{t(p-1)}}+\right.
 \end{multline*}
\begin{multline*}
\left.\qquad\qquad\qquad\qquad+\frac{g^{+}_{B_{r}(x_{0})}\big(\frac{M_{+}(k,r)}{r}\big)}{g^{-}_{B_{r}(x_{0})}\big(\frac{M_{+}(k,r)}{r}\big)} \max(\varepsilon^{p-1},\varepsilon^{q-1})\left(r^{-n}\int\limits_{B_{r}(x_{0})}b^{s}(x)dx\right)^{\frac{1}{s}}
\bigg(\frac{|A^{+}_{k,r}|}{|B_{r}(x_{0})|}\bigg)^{1-\frac{1}{s}}\right\}.
\end{multline*}
Choose $\varepsilon$ by the condition
$$\varepsilon^{p}=\frac{g^{+}_{B_{r}(x_{0})}\big(\frac{M_{+}(k,r)}{r}\big)}{g^{-}_{B_{r}(x_{0})}\big(\frac{M_{+}(k,r)}{r}\big)}\frac{\bigg(r^{-n}\int\limits_{B_{r}(x_{0})}a^{t}_{1}(x)dx\bigg)^{\frac{1}{t(p-1)}}}{\bigg(r^{-n}\int\limits_{B_{r}(x_{0})}b^{s}(x)dx\bigg)^{\frac{1}{s}}}\bigg(\frac{|A^{+}_{k,r}\setminus A^{+}_{l,r}|}{|B_{r}(x_{0})|}\bigg)^{1-\frac{1}{t(p-1)}}\bigg(\frac{|A^{+}_{k,r}|}{|B_{r}(x_{0})|}\bigg)^{-1+\frac{1}{s}},$$
if \quad $\dfrac{g^{+}_{B_{r}(x_{0})}\big(\frac{M_{+}(k,r)}{r}\big)}{g^{-}_{B_{r}(x_{0})}\big(\frac{M_{+}(k,r)}{r}\big)}\dfrac{\bigg(r^{-n}\int\limits_{B_{r}(x_{0})}a^{t}_{1}(x)dx\bigg)^{\frac{1}{t(p-1)}}}{\bigg(r^{-n}\int\limits_{B_{r}(x_{0})}b^{s}(x)dx\bigg)^{\frac{1}{s}}}\bigg(\dfrac{|A^{+}_{k,r}\setminus A^{+}_{l,r}|}{|B_{r}(x_{0})|}\bigg)^{1-\frac{1}{t(p-1)}}\bigg(\dfrac{|A^{+}_{k,r}|}{|B_{r}(x_{0})|}\bigg)^{-1+\frac{1}{s}} \leqslant 1$

 and
$$\varepsilon^{q}=\frac{g^{+}_{B_{r}(x_{0})}\big(\frac{M_{+}(k,r)}{r}\big)}{g^{-}_{B_{r}(x_{0})}\big(\frac{M_{+}(k,r)}{r}\big)}\frac{\bigg(r^{-n}\int\limits_{B_{r}(x_{0})}a^{t}_{1}(x)dx\bigg)^{\frac{1}{t(p-1)}}}{\bigg(r^{-n}\int\limits_{B_{r}(x_{0})}b^{s}(x)dx\bigg)^{\frac{1}{s}}}\bigg(\frac{|A^{+}_{k,r}\setminus A^{+}_{l,r}|}{|B_{r}(x_{0})|}\bigg)^{1-\frac{1}{t(p-1)}}\bigg(\frac{|A^{+}_{k,r}|}{|B_{r}(x_{0})|}\bigg)^{-1+\frac{1}{s}},$$
in the opposite case, from the previous we arrive at
\begin{multline*}
\int\limits_{A^{+}_{k,r}\setminus A^{+}_{l,r}}
|\nabla u|\,\zeta^{q}(x)\, dx \leqslant  \gamma\,\sigma^{-q}\Bigg(\frac{g^{+}_{B_{r}(x_{0})}
\big(\frac{M_{+}(k,r)}{r}\big)}{g^{-}_{B_{r}(x_{0})}\big(\frac{M_{+}(k,r)}{r}\big)}\Bigg)^{\frac{1}{p}}M_{+}(k,r) r^{n-1}\Lambda(x_{0},r) \times \\ \times
\bigg\{\bigg(\frac{|A^{+}_{k,r}\setminus A^{+}_{l,r}|}{|B_{r}(x_{0})|}\bigg)^{\left(1-\frac{1}{t(p-1)}\right)\left(1-\frac{1}{p}\right)}\bigg(\frac{|A^{+}_{k,r}|}{|B_{r}(x_{0})|}\bigg)^{\left(1-\frac{1}{s}\right)\frac{1}{p}} +\qquad\qquad\qquad\qquad\qquad\\
+
\bigg(\frac{|A^{+}_{k,r}\setminus A^{+}_{l,r}|}{|B_{r}(x_{0})|}\bigg)^{\left(1-\frac{1}{t(p-1)}\right)\left(1-\frac{1}{q}\right)}
\bigg(\frac{|A^{+}_{k,r}|}{|B_{r}(x_{0})|}\bigg)^{\left(1-\frac{1}{s}\right)\frac{1}{q}}\bigg\}\leqslant \\ \leqslant
\gamma\,\sigma^{-q}\Bigg(\frac{g^{+}_{B_{r}(x_{0})}\big(\frac{M_{+}(k,r)}{r}\big)}{g^{-}_{B_{r}(x_{0})}
\big(\frac{M_{+}(k,r)}{r}\big)}\Bigg)^{\frac{1}{p}}M_{+}(k,r) r^{n-1} \times\\ \times \Lambda(x_{0},r)\bigg(\frac{|A^{+}_{k,r}\setminus A^{+}_{l,r}|}{|B_{r}(x_{0})|}\bigg)^{\left(1-\frac{1}{t(p-1)}\right)\left(1-\frac{1}{p}\right)}
\bigg(\frac{|A^{+}_{k,r}|}{|B_{r}(x_{0})|}\bigg)^{\left(1-\frac{1}{s}\right)\frac{1}{q}},
\end{multline*}
which proves \eqref{eq2.2}. The proof of \eqref{eq2.3} and \eqref{eq2.4} is completely similar.
\end{proof}

\subsection{De\,Giorgi Type Lemmas}\label{subsect2.3}

The following lemmas are De Giorgi type lemmas and their formulation  under condition $(g_{\mu})$ or condition $(g_{\lambda})$ is different. In the proof we closely follow \cite[Chap.\,2]{LadUr}.

\begin{lemma}\rm{\textbf{(De\,Giorgi Type Lemma Under the Condition ($\mathbf{g_{\lambda}}$))}}\label{lem2.5}
{\it Let $u$ be a local bounded weak solution to Eq.\eqref{eq1.1},
$B_{8r}(x_{0})\subset B_{R}(x_{0})\subset \Omega$, assume that condition $(g_{\lambda})$ be fulfilled and
let
$$
\mu^{+}_{r}\geqslant \esssup\limits_{B_{r}(x_{0})}u, \quad
\mu^{-}_{r}\leqslant \essinf\limits_{B_{r}(x_{0})}u, \quad
\omega_{r}:=\mu^{+}_{r}-\mu^{-}_{r}
$$
and set $v_{+}:=\mu^{+}_{r}-u$, $v_{-}:=u-\mu^{-}_{r}$.
Fix $\xi \in(0, \lambda(r))$ and $\delta \in(0,1)$, then there exists $\nu_{1}\in(0,1)$ depending only on the data and $\delta$ such that if
\begin{equation}\label{eq2.5}
\left| \left\{ x\in B_{r}(x_{0}):v_{\pm}(x)\leqslant \xi\, \omega_{r}\right\} \right|
\leqslant\nu_{1}[\Lambda(x_{0}, r)]^{-\frac{1}{\kappa}} |B_{r}(x_{0})|,\quad \kappa= \frac{1}{n}-\frac{1}{tp}-\frac{1}{sq} +\frac{1}{p}-\frac{1}{q},
\end{equation}
then either
\begin{equation}\label{eq2.6}
\delta(1-\delta)\xi \omega_{r} \leqslant r,
\end{equation}
or
\begin{equation}\label{eq2.7}
v_{\pm}(x)\geqslant \delta\,\,\xi \,\omega_{r}
\quad \text{for a.a.} \ x\in B_{r/2}(x_{0}).
\end{equation}}
\end{lemma}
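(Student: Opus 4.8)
The plan is to run a standard De Giorgi iteration on the function $v_\pm$, but carefully tracking the non-uniform ellipticity through the quantity $\Lambda(x_0,r)$ and exploiting condition $(g_\lambda)$ to control the ratio $g^+_{B_r}/g^-_{B_r}$ in the energy estimate \eqref{eq2.4}. First I would fix the radii $r_j = \tfrac{r}{2} + \tfrac{r}{2^{j+1}}$, $j=0,1,2,\dots$, with the associated balls $B_j := B_{r_j}(x_0)$ and cutoffs $\zeta_j \in C_0^\infty(B_j)$ equal to $1$ on $B_{j+1}$, $|\nabla\zeta_j|\le \gamma 2^j/r$, so that $\sigma_j \asymp 2^{-j}$. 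I would set the levels $k_j := \delta\xi\omega_r + (1-\delta)\tfrac{\xi\omega_r}{2^{j+1}} \cdot 2$, decreasing from $\xi\omega_r$ to $\delta\xi\omega_r$, and study $A_j := B_j\cap\{v_\pm \le k_j\}$ (equivalently the super-level sets for $w_\pm := k_j - v_\pm$ written as sub-level sets of $v_\pm$), aiming to show $y_j := |A_j|/|B_j| \to 0$, which gives \eqref{eq2.7}.

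The core estimate is the Sobolev/De Giorgi–Poincaré step: from Lemma~\ref{lem2.1} applied with levels $k_{j+1} < k_j$ on the ball $B_j$, one bounds $(k_j-k_{j+1})|A_{j+1}|\cdot|B_j\setminus A_{k_j}|$ by $\gamma r^{n+1}$ times the gradient integral over $A_{k_j}\setminus A_{k_{j+1}}$, and that gradient integral is controlled by the local energy estimate \eqref{eq2.4} (in the "$-$" form, i.e. \eqref{eq2.3}) applied on $B_j$ with truncation level $k_j$. Here the key point: alternative \eqref{eq2.6} failing means $\delta(1-\delta)\xi\omega_r > r$, hence $M_-(k_j,r_j) \le k_j \le \xi\omega_r \le \lambda(r)\omega_r$, and since $\xi < \lambda(r)$ we may apply $(g_\lambda)$ with $K \asymp \omega_r / r \cdot$ (bounded by the a priori sup bound $M$) to conclude $g^+_{B_j}(M_-/r)/g^-_{B_j}(M_-/r) \le C(M)$, a constant in the data — this is exactly why the oscillation on $a,b$ through $\Lambda$ rather than $\mu$ suffices here, contrasting with Lemma~\ref{lem2.6}'s $(g_\mu)$ version which would carry an extra $\mu(r)$. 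Combining, one gets a recursive inequality of the shape
\begin{equation*}
y_{j+1} \le \gamma\, C^{j}\, [\Lambda(x_0,r)]^{\theta}\, y_j^{1+\varepsilon},
\end{equation*}
with $\varepsilon>0$ and $\theta>0$ determined by $p,q,t,s,n$ through the exponents $\left(1-\tfrac{1}{t(p-1)}\right)\left(1-\tfrac1p\right) + \left(1-\tfrac1s\right)\tfrac1q$ and the extra $\tfrac1n$ gained from Sobolev embedding — here $\kappa = \tfrac1n - \tfrac{1}{tp} - \tfrac{1}{sq} + \tfrac1p - \tfrac1q$ is precisely the leftover positive exponent, which is positive by \eqref{eq1.3}.

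Then Lemma~\ref{lem2.2} applies: $y_j\to 0$ provided $y_0 \le \gamma^{-1/\varepsilon}C^{-1/\varepsilon^2} [\Lambda(x_0,r)]^{-\theta/\varepsilon}$, and choosing $\nu_1 = \nu_1(\text{data},\delta)$ small enough so that hypothesis \eqref{eq2.5} with the exponent $-\tfrac1\kappa$ on $\Lambda$ forces exactly this smallness (this is where the precise power $[\Lambda(x_0,r)]^{-1/\kappa}$ is calibrated — $\theta/\varepsilon$ should work out to $1/\kappa$). The dependence of $\nu_1$ on $\delta$ enters through the factor $(k_j - k_{j+1})^{-1} \asymp (\delta\,\xi\omega_r\, 2^{-j})^{-1}$ in the Poincaré step and the normalization $v_\pm/(\xi\omega_r)$; the $\xi\omega_r$ factors cancel homogeneously against $M_-(k_j,r_j)\le \xi\omega_r$, leaving only a $\delta$-dependent constant absorbed into $\gamma$ and $C$. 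The main obstacle I anticipate is the bookkeeping to verify that the activation of $(g_\lambda)$ is legitimate at \emph{every} iteration level — one needs $M_-(k_j,r_j)/r_j$ to stay in the admissible range $[1,\lambda(r)K]$ uniformly in $j$ (using $r_j \ge r/2$ and $k_j \le \xi\omega_r < \lambda(r)\omega_r$), and to confirm that the constant $C(M)$ from $(g_\lambda)$ combines with the geometric-series constants without degrading the final exponent; the rest is the routine De Giorgi machinery from \cite[Chap.\,2]{LadUr}. The $\pm$ cases are handled identically, using \eqref{eq2.2} for $v_+$ and \eqref{eq2.3} for $v_-$.
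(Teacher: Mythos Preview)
Your overall strategy --- shrinking radii, decreasing levels, invoking $(g_\lambda)$ to bound $g^+/g^-$ by a constant, applying the energy estimate \eqref{eq2.4}, then iterating via Lemma~\ref{lem2.2} --- matches the paper. But there is a genuine gap in the core step. You propose to use the De\,Giorgi--Poincar\'e inequality (Lemma~\ref{lem2.1}) to pass from $|A_{j+1}|$ to the gradient integral. Carrying out exactly what you wrote --- Lemma~\ref{lem2.1} gives $(k_j-k_{j+1})\,|A_{j+1}|\cdot|B_j\setminus A_{k_j}|\le\gamma r^{n+1}\int_{A_{k_j}\setminus A_{k_{j+1}}}|\nabla u|\,dx$, then bound the right side by \eqref{eq2.4} --- produces a recursion $y_{j+1}\le \gamma\,C^j\,\Lambda(x_0,r)\,y_j^{\,\alpha}$ with exponent
\[
\alpha=\Big(1-\tfrac{1}{t(p-1)}\Big)\Big(1-\tfrac1p\Big)+\Big(1-\tfrac1s\Big)\tfrac1q
=1-\tfrac1n+\kappa.
\]
Since each of $\tfrac{1}{tp},\tfrac{1}{sq},\tfrac1p-\tfrac1q$ is positive, one always has $\kappa<\tfrac1n$, hence $\alpha<1$, and Lemma~\ref{lem2.2} cannot close the iteration. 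You invoke ``the extra $\tfrac1n$ gained from Sobolev embedding'', but Lemma~\ref{lem2.1} contains no Sobolev embedding; it is a Poincar\'e-type slicing inequality, and in this paper it is the tool for the expansion-of-positivity Lemmas~\ref{lem2.7}--\ref{lem2.8}, not for the present De\,Giorgi lemma.

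The paper's actual step replaces Lemma~\ref{lem2.1} by the Sobolev embedding $W^{1,1}_0\hookrightarrow L^{n/(n-1)}$:
\[
(1-\delta)^2\,\tfrac{\xi\omega_r}{2^{j+1}}\,\big|A^+_{k_{j+1},r_{j+1}}\big|
\le\int(u-k_j)_+\,\zeta_j^q\,dx
\le\gamma\int\big|\nabla\big((u-k_j)_+\zeta_j^q\big)\big|\,dx\cdot\big|A^+_{k_j,r_j}\big|^{1/n},
\]
and it is the factor $|A^+_{k_j,r_j}|^{1/n}$ on the right that supplies the missing $\tfrac1n$, raising the exponent to $1+\kappa$ and making the recursion genuinely superlinear. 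Once you swap Lemma~\ref{lem2.1} out for this Sobolev step, the rest of your plan (verifying $(g_\lambda)$ applies at each level, the homogeneous cancellation of $\xi\omega_r$, the $(1-\delta)$-dependence of $\nu_1$) goes through exactly as in the paper.
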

\begin{proof}
We provide the proof of \eqref{eq2.7} for $v_{+}$, while the proof for $v_{-}$
is completely similar.
For $j=0,1,2,\ldots$ we set $r_{j}:=\dfrac{r}{2}(1+2^{-j}), \,\,\bar{r}_{j}=\dfrac{r_{j}+r_{j+1}}{2},\,\,
k_{j}:=\mu^{+}_{r}-\delta(2-\delta) \xi\,\omega_{r}-
\xi\,\omega_{r}\dfrac{(2-\delta)^{2}}{2^{j+1}}$, and let $\zeta_{j} \in C^{\infty}_{0}(B_{\bar{r}_{j}}(x_{0})), \,\,0 \leqslant \zeta_{j} \leqslant 1,\,\, \zeta_{j} =1$ in $B_{r_{j+1}}(x_{0}),$ and $| \nabla \zeta_{j} | \leqslant \gamma \dfrac{2^{j}}{r}.$

Since $M_{+}(k_{j},r)\leqslant \xi\,\omega_{r} \leqslant 2M \lambda(r)$
for $j=0,1,2,\ldots$, so if \eqref{eq2.6} is violated then condition $(g_{\lambda})$ is applicable and
we obtain that
$$
g^{+}_{B_{r}(x_{0})}\left( \frac{M_{+}(k_{j},r_{j})}{r_{j}}  \right)\leqslant \gamma
 g^{-}_{B_{r}(x_{0})}\left( \frac{M_{+}(k_{j},r_{j})}{r}  \right).
$$
Therefore inequality \eqref{eq2.4}  can be rewritten as
\begin{equation*}
\int\limits_{A^{+}_{k_{j},r_{j}}}
|\nabla u| \zeta^{q}_{j}\,dx\leqslant \gamma\,2^{j\gamma}
\xi\,\omega_{r} r^{n-1}\Lambda(x_{0},r)\,\left(\frac{|A^{+}_{k_{j},r_{j}}|}{|B_{r}(x_{0})|}\right)^{\left(1-\frac{1}{t(p-1)}\right)\left(1-\frac{1}{p}\right)+
\left(1-\frac{1}{s}\right)\frac{1}{q}} .
\end{equation*}
From this, using the Sobolev embedding theorem we obtain that
\begin{multline*}
(1-\delta)^{2}\xi\,\frac{\omega_{r}}{2^{j+1}} \left|A^{+}_{k_{j+1},r_{j+1}}\right| \leqslant \int\limits_{A^{+}_{k_{j},\bar{r}_{j}}} (u-k_{j})_{+} \zeta^{q}_{j} dx \leqslant \\ \leqslant \gamma\int\limits_{A^{+}_{k_{j},r_{j}}} | \nabla\bigg((u-k_{j})_{+}\zeta^{q}_{j}(x)\bigg)dx |A^{+}_{k_{j},r_{j}}|^{\frac{1}{n}} \leqslant\qquad\qquad \\
\leqslant \gamma\,2^{j\gamma} \xi\,\omega_{r} r^{n}\Lambda(x_{0},r)\,\bigg(\frac{|A^{+}_{k_{j},r_{j}}|}{|B_{r}(x_{0})|}\bigg)^{\left(1-\frac{1}{t(p-1)}\right)\left(1-\frac{1}{p}\right)+\left(1-\frac{1}{s}\right)\frac{1}{q} +\frac{1}{n}},
\end{multline*}
 from which we arrive at
\begin{equation*}
y_{j+1}=\frac{|A^{+}_{k_{j+1},r_{j+1}}|}{|B_{r}(x_{0})|} \leqslant \gamma\,(1-\delta)^{-2}\, 2^{j\gamma}\Lambda(x_{0},r)
y_{j}^{1+\kappa},\quad j=0,1,2,... .
\end{equation*}
By our choices $\kappa >0$, so choosing $\nu_{1}$ by the condition
$$\nu_{1}(r)=\gamma^{-1}\,\,(1-\delta)^{\frac{2}{\kappa}}$$
and using Lemma \ref{lem2.2}, from this  we arrive at the required \eqref{eq2.7},
which completes the proof of the lemma.
\end{proof}

\begin{lemma}\rm{\textbf{(De\,Giorgi Type Lemma Under the Condition ($\mathbf{g_{\mu}}$))}}\label{lem2.6}
{\it Let $u$ be a local bounded weak solution to Eq.\eqref{eq1.1}, $B_{8r}(x_{0})\subset B_{R}(x_{0})\subset \Omega$, assume that condition $(g_{\mu})$ be fulfilled.
Fix $\xi, \delta\in(0,1)$, then there exists $\nu_{1}\in(0,1)$ depending only on the data and $\delta$ such that if
\begin{equation}\label{eq2.8}
\left| \left\{ x\in B_{r}(x_{0}):v_{\pm}(x)\leqslant \xi\,\omega_{r}\right\} \right|
\leqslant\nu_{1} [\mu(r)]^{-\frac{1}{\kappa}} [\Lambda(x_{0}, r)]^{-\frac{1}{\kappa}}|B_{r}(x_{0})|,
\end{equation}
then either \eqref{eq2.6} holds, or
\begin{equation}\label{eq2.9}
v_{\pm}(x)\geqslant \delta \xi\,\omega_{r}
\quad \text{for a.a.} \ x\in B_{r/2}(x_{0}).
\end{equation}
Here $\kappa >0 $ is the number, defined in Lemma \ref{lem2.5}.}
\end{lemma}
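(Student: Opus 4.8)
The plan is to mimic the proof of Lemma \ref{lem2.5} almost verbatim, tracking only how the use of condition $(g_\mu)$ in place of $(g_\lambda)$ introduces an extra factor $\mu(r)$. As before, I would prove \eqref{eq2.9} for $v_+$, the case $v_-$ being identical. Fix the same iteration radii $r_j:=\frac{r}{2}(1+2^{-j})$, $\bar r_j=\frac{r_j+r_{j+1}}{2}$, and levels $k_j:=\mu^+_r-\delta(2-\delta)\xi\,\omega_r-\xi\,\omega_r\frac{(2-\delta)^2}{2^{j+1}}$, with cutoff functions $\zeta_j\in C^\infty_0(B_{\bar r_j}(x_0))$, $\zeta_j=1$ on $B_{r_{j+1}}(x_0)$, $|\nabla\zeta_j|\leqslant\gamma\,2^j/r$.

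The one structural change is the control of the ratio $g^+_{B_r(x_0)}/g^-_{B_r(x_0)}$. Here $M_+(k_j,r_j)\leqslant\xi\,\omega_r\leqslant 2M$ (note $\xi\in(0,1)$ now, so there is no $\lambda(r)$ gain), and if \eqref{eq2.6} fails then $r_j\leqslant M_+(k_j,r_j)/(\delta(1-\delta)\xi)$, so that the argument $M_+(k_j,r_j)/r_j$ lies in a range $[r_j, C]$ where $(g_\mu)$ is applicable with $K$ determined by the data. Applying $(g_\mu)$ yields
\begin{equation*}
g^{+}_{B_{r}(x_{0})}\!\left(\frac{M_{+}(k_{j},r_{j})}{r_{j}}\right)\leqslant \gamma\,\mu(r_j)\,g^{-}_{B_{r}(x_{0})}\!\left(\frac{M_{+}(k_{j},r_{j})}{r_{j}}\right)\leqslant \gamma\,\mu(r)\,g^{-}_{B_{r}(x_{0})}\!\left(\frac{M_{+}(k_{j},r_{j})}{r_{j}}\right),
\end{equation*}
using $\mu$ non-increasing and $r_j\geqslant r/2$ (absorbing the constant from $\mu(r/2)\leqslant\gamma\mu(r)$, which follows from a standard doubling-type estimate on $\mu$ — or simply by replacing $\mu(r)$ with $\mu(r/2)$ throughout, which is harmless). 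Feeding this into \eqref{eq2.4} and then proceeding exactly as in Lemma \ref{lem2.5} — testing the identity, applying the De\,Giorgi--Poincar\'e inequality (Lemma \ref{lem2.1}) together with the Sobolev embedding, and isolating the measure ratio $y_j:=|A^+_{k_j,r_j}|/|B_r(x_0)|$ — produces the recursion
\begin{equation*}
y_{j+1}\leqslant \gamma\,(1-\delta)^{-2}\,2^{j\gamma}\,\mu(r)\,\Lambda(x_{0},r)\,y_{j}^{1+\kappa},\qquad j=0,1,2,\ldots,
\end{equation*}
with the same $\kappa=\frac1n-\frac1{tp}-\frac1{sq}+\frac1p-\frac1q>0$ as before.

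Finally I invoke Lemma \ref{lem2.2}: the recursion forces $y_j\to 0$ provided $y_0\leqslant\big(\gamma(1-\delta)^{-2}\mu(r)\Lambda(x_0,r)\big)^{-1/\kappa}\cdot(\text{const})$, which is exactly what hypothesis \eqref{eq2.8} guarantees once $\nu_1$ is chosen as $\nu_1=\gamma^{-1}(1-\delta)^{2/\kappa}$ (the $[\mu(r)]^{-1/\kappa}[\Lambda(x_0,r)]^{-1/\kappa}$ factor in \eqref{eq2.8} matching the $\mu(r)\Lambda(x_0,r)$ coefficient in the recursion). Then $|A^+_{k_\infty,r/2}|=0$, i.e. $u\leqslant k_\infty=\mu^+_r-\delta(2-\delta)\xi\omega_r\leqslant\mu^+_r-\delta\xi\omega_r$ a.e.\ on $B_{r/2}(x_0)$, which is precisely \eqref{eq2.9}. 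The only point requiring any care is the applicability of $(g_\mu)$ — ensuring the argument $M_+(k_j,r_j)/r_j$ stays in the admissible window $[r_j,K]$ uniformly in $j$, which is where the alternative \eqref{eq2.6} and the bound $M\leqslant\sup_\Omega u<\infty$ enter — but this is handled exactly as in the previous lemma and presents no genuine obstacle.
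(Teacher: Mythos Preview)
Your proposal is correct and follows essentially the same approach as the paper's proof. Two minor remarks: the De\,Giorgi--Poincar\'e inequality (Lemma~\ref{lem2.1}) is not actually invoked here---only the Sobolev embedding is used to pass from the energy estimate \eqref{eq2.4} to the recursion for $y_j$---and the paper avoids any doubling assumption on $\mu$ by applying $(g_\mu)$ directly on the larger ball $B_r(x_0)\supset B_{r_j}(x_0)$, which yields the factor $\mu(r)$ (with an extra $2^{j\gamma}$ absorbed into the iteration constant) rather than $\mu(r_j)$.
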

\begin{proof}
For $j=0,1,2,\ldots$ we set $r_{j}:=\dfrac{r}{2}(1+2^{-j}),\,\, \bar{r}_{j}=\dfrac{r_{j}+r_{j+1}}{2},\,\,
k_{j}:=\mu^{+}_{r}- \delta(2-\delta)\xi\omega_{r}-\\
-\dfrac{(1-\delta)^{2}}{2^{j}}\xi\omega_{r}$, and let $\zeta_{j} \in C^{\infty}_{0}(B_{\bar{r}_{j}}(x_{0})),\,\, 0 \leqslant \zeta_{j} \leqslant 1,\,\, \zeta_{j} =1$ in $B_{r_{j+1}}(x_{0}),$ and $| \nabla \zeta_{j} | \leqslant \gamma \dfrac{2^{j}}{r}.$ We assume that
$M_{+}(k_{\infty},r/2)\geqslant \delta(1-\delta)\,\omega_{r}$,
because in the opposite case, the required \eqref{eq2.9} is evident.
If \eqref{eq2.6} is violated then $M_{+}(k_{\infty},r/2)\geqslant  r$.
In addition, since $M_{+}(k_{j},r)\leqslant \xi\,\omega_{r}$
for $j=0,1,2,\ldots$, then condition $(g_{\mu})$ is applicable and
we obtain that
$$
g^{+}_{B_{r}(x_{0})}\left(\frac{M_{+}(k_{j},r_{j})}{r_{j}}  \right)\leqslant \gamma
2^{j\gamma} \mu(r) g^{-}_{B_{r}(x_{0})}\left( \frac{M_{+}(k_{j},r_{j})}{r}  \right).
$$
Therefore inequality \eqref{eq2.4} similarly to Lemma \ref{lem2.5} can be rewritten as
$$
\int\limits_{A^{+}_{k_{j},r_{j}}}
|\nabla u|\,\zeta^{q}_{j}\,dx\leqslant \gamma\,2^{j\gamma}\,\mu(r)
\xi\,\omega_{r} r^{n}\Lambda(x_{0},r)\,\left(\frac{|A^{+}_{k_{j},r_{j}}|}{|B_{r}(x_{0})|}\right)^{\left(1-\frac{1}{t(p-1)}\right)\left(1-\frac{1}{p}\right)+
\left(1-\frac{1}{s}\right)\frac{1}{q}} .
$$

From this, using the Sobolev embedding theorem, similarly to Lemma \ref{lem2.4} we obtain
$$
y_{j+1}=\frac{|A^{+}_{k_{j+1},r_{j+1}}|}{|B_{r}(x_{0})|} \leqslant \gamma\,2^{j\gamma}\,(1-\delta)^{-2}\,\mu(r)\,\Lambda(x_{0}, r) y_{j}^{1+\kappa}, \,\,j=0,1,2,... ,
$$
choosing $\nu_{1}$ from the condition
$$\nu_{1}=\gamma^{-1}\,(1-\delta)^{\frac{2}{\kappa}}$$
and using Lemma \ref{lem2.2}, from this  we arrive at the required \eqref{eq2.9},
which completes the proof of the lemma.
\end{proof}

\subsection{ Expansion of the Positivity Lemma}\label{subsect2.4}
The following two lemmas will be used in the sequel. In the proof we closely follow to \cite[Chap.\,2]{LadUr}. The first one is a variant of the expansion of positivity lemma under the $(g_{\lambda})$ condition.
\begin{lemma}\label{lem2.7}
Let $u$ be a local bounded weak solution to Eq. \eqref{eq1.1},
let $B_{8r}(x_{0})\subset B_{R}(x_{0})\subset \Omega$ and $\xi  \in(0,1)$,\,in addition let condition $(g_{\lambda})$ be fulfilled.
Assume that with some $\alpha_{0}\in(0,1)$ and $\bar{\lambda}(r)\in(0,\lambda(\frac{r}{2}))$ there holds
\begin{equation}\label{eq2.10}
\left| \left\{ x\in B_{3r/4}(x_{0}): v_{\pm}(x)
\leqslant \xi\,\bar{\lambda}(r)\,\omega_{r} \right\} \right|
\leqslant(1-\alpha_{0})\, |B_{3r/4}(x_{0})|.
\end{equation}
Then there exists number $C_{\ast}$ depending only on the known data, $\alpha_{0}$ and $\xi$, such that
either
\begin{equation}\label{eq2.11}
\omega_{r} \leqslant \frac{r}{\bar{\lambda}(r)}\,\,\exp\left(C_{\ast} [\Lambda(x_{0},r)]^{\bar{\beta}_{1}}\right),
\end{equation}
or
\begin{equation}\label{eq2.12}
v_{\pm}(x)\geqslant \omega_{r} \bar{\lambda}(r)\, \exp\left(-C_{\ast} [\Lambda(x_{0}, r)]^{\bar{\beta}_{1}}\right)
\quad \text{for a.a.} \ x\in B_{r/2}(x_{0}).
\end{equation}
Here $\bar{\beta}_{1} >0$ is some fixed  number depending only upon the data.
\end{lemma}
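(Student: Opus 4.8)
The plan is to iterate the De Giorgi type Lemma 2.5 over a geometric sequence of levels $\xi_k = \xi\,\bar\lambda(r)\,2^{-k}$, using the measure-theoretic input \eqref{eq2.10} together with the De Giorgi--Poincar\'e Lemma 2.1 to produce, at each step, a shrinking of the "bad set" $\{v_\pm \le \xi_k\,\omega_r\}$ until its relative measure drops below the threshold $\nu_1[\Lambda(x_0,r)]^{-1/\kappa}$ required to apply Lemma 2.5. First I would fix the level $l = \xi\,\bar\lambda(r)\,\omega_r$ and consider the truncations at $k_j = \xi\,\bar\lambda(r)\,\omega_r\,2^{-j}$; writing $A_{j} := \{x \in B_{3r/4}(x_0) : v_\pm(x) < k_j\}$, the hypothesis \eqref{eq2.10} gives $|B_{3r/4}\setminus A_0| \ge \alpha_0 |B_{3r/4}|$, and this lower bound persists for all $j$ since the sets $B_{3r/4}\setminus A_j$ are nested downward in $j$ (they grow), so $|B_{3r/4}\setminus A_{j}| \ge \alpha_0|B_{3r/4}|$ throughout.

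The core of the argument is the standard De Giorgi "shrinking lemma" mechanism: apply Lemma 2.1 on $B_{3r/4}(x_0)$ with the pair of levels $k_{j+1} < k_j$ to get
\begin{equation*}
(k_j - k_{j+1})\,|A_{j+1}| \le (k_j-k_{j+1})\,|A_{j+1}|\cdot\frac{|B_{3r/4}\setminus A_j|}{\alpha_0|B_{3r/4}|} \le \frac{\gamma r^{n+1}}{\alpha_0|B_{3r/4}|}\int_{A_j\setminus A_{j+1}}|\nabla v_\pm|\,dx,
\end{equation*}
and then estimate the gradient integral on the right by the local energy estimate \eqref{eq2.4} (applied to $v_\pm$, which solves an equation of the same structure). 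Because $k_j - k_{j+1} = \xi\,\bar\lambda(r)\,\omega_r\,2^{-j-1}$ and $M_\pm(k_j, 3r/4) \le \xi\,\bar\lambda(r)\,\omega_r \le 2M\lambda(r)$, the hypothesis $\bar\lambda(r) < \lambda(r/2)$ lets me invoke condition $(g_\lambda)$ to control the ratio $g^+/g^-$ by a constant — this is exactly where $(g_\lambda)$ rather than $(g_\mu)$ is needed, and it is what keeps $\mu(r)$ out of the final estimate. Combining, with $X_j := |A_j|/|B_{3r/4}|$, one gets a recursive inequality of Cauchy type
\begin{equation*}
X_{j+1} \le \gamma\,\alpha_0^{-1}\,\Lambda(x_0,r)\,2^{j\gamma}\,\bigl(\text{ratio of }\bar\lambda(r)\text{ and }r/\omega_r\text{ factors}\bigr)\,X_j^{1+\kappa}.
\end{equation*}
Assuming \eqref{eq2.11} fails, i.e. $\omega_r\,\bar\lambda(r) > r\exp(C_\ast[\Lambda]^{\bar\beta_1})$, one checks the extra scalar factor is bounded, and Lemma 2.2 then shows $X_j \to 0$ provided $X_{j_0} \le (\gamma\alpha_0^{-1}\Lambda)^{-1/\kappa}$ for some finite $j_0$; since the $X_j$ form a decreasing sequence and their "mass depletion" $\sum(X_j - X_{j+1}) \le 1$, a pigeonhole/telescoping argument forces $X_{j_0}$ below any prescribed threshold for some $j_0 \le \gamma[\Lambda(x_0,r)]^{1/\kappa}\cdot(\text{something})$ — more precisely, one runs the De Giorgi iteration long enough that the number of required steps is quantified by a power of $\Lambda$, which is the source of the exponential $\exp(C_\ast[\Lambda]^{\bar\beta_1})$.

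Once $X_{j_0}$ is small enough, I apply Lemma 2.5 on $B_{3r/4}(x_0)$ (rescaled) with $\xi$ there taken to be $\xi\,\bar\lambda(r)\,2^{-j_0}$ and an appropriate $\delta$: its alternative \eqref{eq2.6} reproduces \eqref{eq2.11} (after absorbing constants), while its conclusion \eqref{eq2.7} gives $v_\pm \ge \delta\,\xi\,\bar\lambda(r)\,2^{-j_0}\,\omega_r$ a.e. on $B_{r/2}(x_0)$, which is precisely \eqref{eq2.12} with $\exp(-C_\ast[\Lambda]^{\bar\beta_1}) \sim 2^{-j_0}$ and $\bar\beta_1 = 1/\kappa$ (up to enlarging $C_\ast$). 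The main obstacle is bookkeeping the dependence of the number of iteration steps $j_0$ on $\Lambda(x_0,r)$: one must show that the geometric decay rate in the Cauchy recursion, which degrades as $\Lambda$ grows, still reaches the smallness threshold in a number of steps that is at most polynomial in $[\Lambda]^{1/\kappa}$ — this is what determines $\bar\beta_1$ and forces the exponential (rather than polynomial) form of the constant in \eqref{eq2.11}--\eqref{eq2.12}. A secondary technical point is verifying that $v_\pm$ is itself an admissible (sub/super)-solution so that \eqref{eq2.4} applies with $M_\pm$ replaced by the appropriate quantity; this is routine since $\mu_r^\pm \mp u$ differs from $\mp u$ by a constant.
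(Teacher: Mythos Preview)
Your overall plan --- run a shrinking iteration on the levels $k_j=\xi\bar\lambda(r)\omega_r 2^{-j}$ via Lemma~\ref{lem2.1}, then feed the outcome into Lemma~\ref{lem2.5} --- matches the paper's strategy. But the recursion you write down is not what actually comes out, and with your choices the argument stalls.

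The specific issues are these. First, you invoke \eqref{eq2.4}, which controls $\int_{A^{\pm}_{k,r}}|\nabla u|$ by a power of $|A^{\pm}_{k,r}|$; coupling this with Lemma~\ref{lem2.1} yields at best
\[
X_{j+1}\le \gamma\,\alpha_0^{-1}\,\Lambda(x_0,r)\,X_j^{\kappa_2},\qquad \kappa_2=\Big(1-\tfrac{1}{t(p-1)}\Big)\Big(1-\tfrac1p\Big)+\Big(1-\tfrac1s\Big)\tfrac1q<1,
\]
which does \emph{not} drive $X_j$ to zero (it converges to a fixed point larger than $1$). The exponent $1+\kappa$ that you claim cannot appear here: in Lemma~\ref{lem2.5} it comes from the extra factor $|A^{+}_{k_j,r_j}|^{1/n}$ supplied by Sobolev embedding, and there is no Sobolev step in the shrinking argument. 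Your subsequent appeal to ``pigeonhole/telescoping'' on $\sum(X_j-X_{j+1})\le 1$ only gives one small \emph{increment}, not a small $X_{j_0}$.

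What the paper actually does --- and what makes the telescoping work --- is to use \eqref{eq2.2}/\eqref{eq2.3} instead of \eqref{eq2.4}. That estimate carries $|A^{+}_{k_j,r}\setminus A^{+}_{k_{j+1},r}|^{\kappa_1}$, $\kappa_1=(1-\tfrac{1}{t(p-1)})(1-\tfrac1p)$, on the right. Combined with Lemma~\ref{lem2.1} this gives
\[
|A^{+}_{k_{j+1},3r/4}|\le \gamma\,\alpha_0^{-1}\,\Lambda(x_0,r)\,r^n\Big(\tfrac{|A^{+}_{k_j,r}\setminus A^{+}_{k_{j+1},r}|}{|B_r|}\Big)^{\kappa_1}.
\]
Raise to the power $1/\kappa_1$ and \emph{sum} in $j$: the right-hand side telescopes to $\le\gamma\alpha_0^{-1/\kappa_1}[\Lambda]^{1/\kappa_1}|B_{3r/4}|^{1/\kappa_1}$, while the left is bounded below by $(j_\ast-[\log 1/\xi]-1)\,|A^{+}_{k_{j_\ast},3r/4}|^{1/\kappa_1}$ by monotonicity. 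Now choose $j_\ast$ so that the resulting bound on $|A^{+}_{k_{j_\ast},3r/4}|$ meets the threshold $\nu_1[\Lambda]^{-1/\kappa}$ of Lemma~\ref{lem2.5}; this forces $j_\ast\asymp[\Lambda]^{(1+1/\kappa)/\kappa_1}$ and hence $\bar\beta_1=\tfrac{1}{\kappa_1}\big(1+\tfrac{1}{\kappa}\big)$, not $1/\kappa$ as you guessed. Once you replace \eqref{eq2.4} by \eqref{eq2.2}/\eqref{eq2.3} and run this summation, the rest of your outline (invoke $(g_\lambda)$ to kill the $g^+/g^-$ ratio, then apply Lemma~\ref{lem2.5} at level $k_{j_\ast}$) goes through exactly as in the paper.
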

\begin{proof}
We provide the proof of \eqref{eq2.12} for $v_{+}$,
while the proof for $v_{-}$ is completely similar.
We set $k_{j}:=\mu_{r}^{+}-\dfrac{\bar{\lambda}(r)\,\omega_{r}}{2^{j}}$,
$j=[\log 1/\xi]+1,2, \ldots,j_{\ast}$, where $j_{\ast}$ is large enough to be chosen later. We will assume that
$M_{+}(k_{j_{\ast}},\frac{r}{2}) \geqslant \frac{\omega_{r}}{2^{j_{*}+1}}$, because in the opposite case, the required
\eqref{eq2.12} is evident. If \eqref{eq2.11} is violated, then $M_{+}(k_{j_{\ast}},\frac{r}{2}) \geqslant r$, and
since $M_{+}(k_{j},r)\leqslant 2^{-j}\bar{\lambda}(r)\,\omega_{r} \leqslant 2M \lambda(r)$, $j=\overline{[\log 1/\xi]+1,j_{\ast}}$,
then by  $(g_{\lambda})$ we obtain that
$$
g^{+}_{B_{r}(x_{0})}\left( \frac{M_{+}(k_{j},r)}{r} \right)\leqslant
g^{-}_{B_{r}(x_{0})}\left( \frac{M_{+}(k_{j},r)}{r} \right).
$$
Therefore inequality \eqref{eq2.2}
can be rewritten as
\begin{equation*}
\int\limits_{A^{+}_{k_{j},r}\setminus A^{+}_{k_{j+1},r}}
|\nabla u|\,\zeta^{\,q}\,dx\leqslant \gamma\,\bar{\lambda}(r)\,\frac{\omega_{r}}{2^{j}}\,r^{n-1}\Lambda(x_{0},r)\Bigg(\frac{|A^{+}_{k_{j},r}\setminus A^{+}_{k_{j+1},r}|}{|B_{r}(x_{0})|}\bigg)^{\left(1-\frac{1}{t(p-1)}\right)(1-\frac{1}{p})},
\end{equation*}
where $\zeta\in C_{0}^{\infty}(B_{r}(x_{0}))$, $0\leqslant\zeta\leqslant1$,
$\zeta=1$ in $B_{3r/4}(x_{0})$, $|\nabla\zeta|\leqslant 4/r$.
From this, using \eqref{eq2.10}, De\,Giorgi-Poincar\'{e} Lemma \ref{lem2.1}, we obtain
\begin{multline*}
\bar{\lambda}(r)\,\frac{\omega_{r}}{2^{j+1}}|A^{+}_{k_{j},\frac{3}{4}r}|\leqslant \frac{\gamma}{\alpha_{0}}\,r\,\int\limits_{A^{+}_{k_{j},r}\setminus A^{+}_{k_{j+1},r}}|\nabla u|\,\zeta^{\,q}\,dx\leqslant \\ \leqslant \frac{\gamma}{\alpha_{0}}\,\bar{\lambda}(r)\,\frac{\omega_{r}}{\,2^{j}}\,r^{n-1}\Lambda(x_{0},r)\left(\frac{|A^{+}_{k_{j},r}\setminus A^{+}_{k_{j+1},r}|}{|B_{r}(x_{0})|}\right)^{\kappa_{1}},\quad
\kappa_{1}=\left(1-\frac{1}{t(p-1)}\right)\left(1-\frac{1}{p}\right),
\end{multline*}
raising the left and right hand-sides to the power $\dfrac{1}{\kappa_{1}}$ and summing up the resulting inequalities in $j$, $j=[\log 1/\xi]+1,2, \ldots,j_{\ast}$ ,we conclude that
$$
(j_{\ast} -[\log1/ \xi]-1)\left|A^{+}_{k_{j_{\ast}},\frac{3}{4}r}\right|^{\frac{1}{\kappa_{1}}}\leqslant \gamma\,\alpha_{0}^{-\frac{1}{\kappa_{1}}} [\Lambda(x_{0},r)]^{\frac{1}{\kappa_{1}}} \left|B_{\frac{3}{4}r}(x_{0})\right|^{\frac{1}{\kappa_{1}}}.
$$
Choosing $j_{\ast}$ by the condition
$$\gamma^{\kappa_{1}}\,\alpha_{0}^{-1}\,(j_{\ast} -[\log1/ \xi]-1)^{-\kappa_{1}} \leqslant \nu_{1}\,[\Lambda(x_{0},r)]^{-1-\frac{1}{\kappa}},$$where $\nu_{1}$ and $\kappa$ are the constants, defined in Lemma \ref{lem2.5}, therefore by \eqref{eq2.7} we obtain inequality \eqref{eq2.12}, which proves Lemma \ref{lem2.7} with $\bar{\beta}_{1}= \frac{1}{\kappa_{1}}\,(1+\frac{1}{\kappa})$.
\end{proof}

The following lemma is a variant of the expansion of positivity lemma under the $(g_{\mu})$ condition.
\begin{lemma}\label{lem2.8}
Let $u$ be a local bounded weak solution to Eq.\eqref{eq1.1},
let $B_{8r}(x_{0})\subset B_{R}(x_{0})\subset \Omega$ and $\xi  \in(0,1)$,\, in addition let condition $(g_{\mu})$ be fulfilled.
Assume that with some $\alpha_{0}\in(0,1)$ there holds
\begin{equation}\label{eq2.13}
\left| \left\{ x\in B_{3r/4}(x_{0}):v_{\pm}(x)
\leqslant \xi\,\omega_{r} \right\} \right|
\leqslant(1-\alpha_{0})\, |B_{3r/4}(x_{0})|.
\end{equation}
Then there exists number $C_{\ast}$ depending only on the known data, $\alpha_{0}$ and $\xi$ such that either
\begin{equation}\label{eq2.14}
\omega_{r} \leqslant \,r\, \exp\big(C_{\ast} [\mu(r)]^{\bar{\beta}_{1}}\,[\Lambda(x_{0}, r)]^{\bar{\beta}_{1}}\big),
\end{equation}
\begin{equation}\label{eq2.15}
v_{\pm}(x)\geqslant \xi \omega_{r} \exp \big(-C_{\ast}[\mu(r)]^{\bar{\beta}_{1}}\,[\Lambda(x_{0}, r)]^{\bar{\beta}_{1}}\big) ,
\quad \text{for a.a.} \ x\in B_{r/2}(x_{0}),
\end{equation}
here $\bar{\beta}_{1} >0$ is the number, defined in Lemma \ref{lem2.7}.
\end{lemma}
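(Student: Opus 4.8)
The plan is to mirror the proof of Lemma~\ref{lem2.7} almost verbatim, replacing every invocation of the $(g_\lambda)$-based ingredients by their $(g_\mu)$-based counterparts. We again treat only $v_+$; the argument for $v_-$ is symmetric. Set $k_j:=\mu_r^+-\dfrac{\omega_r}{2^j}$ for $j=[\log 1/\xi]+1,\ldots,j_\ast$, with $j_\ast$ to be fixed at the end, and assume $M_+(k_{j_\ast},r/2)\geqslant \omega_r\,2^{-j_\ast-1}$, since otherwise \eqref{eq2.15} is already true. If \eqref{eq2.14} fails then, as in Lemma~\ref{lem2.7}, $M_+(k_{j_\ast},r/2)\geqslant r$; combined with $M_+(k_j,r)\leqslant 2^{-j}\omega_r\leqslant 2M$ this lets us apply condition $(g_\mu)$ to get
\begin{equation*}
g^{+}_{B_r(x_0)}\!\left(\frac{M_+(k_j,r)}{r}\right)\leqslant \gamma\,\mu(r)\,g^{-}_{B_r(x_0)}\!\left(\frac{M_+(k_j,r)}{r}\right),
\end{equation*}
so that the energy estimate \eqref{eq2.2} acquires an extra factor $\mu(r)$ and becomes
\begin{equation*}
\int_{A^{+}_{k_j,r}\setminus A^{+}_{k_{j+1},r}}|\nabla u|\,\zeta^{\,q}\,dx\leqslant \gamma\,\mu(r)\,\frac{\omega_r}{2^j}\,r^{n-1}\,\Lambda(x_0,r)\left(\frac{|A^{+}_{k_j,r}\setminus A^{+}_{k_{j+1},r}|}{|B_r(x_0)|}\right)^{\kappa_1},
\end{equation*}
with $\zeta$ the same cutoff as before and $\kappa_1=\bigl(1-\tfrac{1}{t(p-1)}\bigr)\bigl(1-\tfrac1p\bigr)$.

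Next I would feed this into the De\,Giorgi--Poincar\'e Lemma~\ref{lem2.1} exactly as in Lemma~\ref{lem2.7}, using hypothesis \eqref{eq2.13} to bound $|B_{3r/4}(x_0)\setminus A^{+}_{k_j,3r/4}|$ from below by $\alpha_0|B_{3r/4}(x_0)|$. This produces
\begin{equation*}
\frac{\omega_r}{2^{j+1}}\,|A^{+}_{k_j,\frac34 r}|\leqslant \frac{\gamma}{\alpha_0}\,\mu(r)\,\frac{\omega_r}{2^{j}}\,r^{n}\,\Lambda(x_0,r)\left(\frac{|A^{+}_{k_j,r}\setminus A^{+}_{k_{j+1},r}|}{|B_r(x_0)|}\right)^{\kappa_1}.
\end{equation*}
Raising to the power $1/\kappa_1$, summing over $j=[\log 1/\xi]+1,\ldots,j_\ast$ and using that the sets $A^{+}_{k_j,r}\setminus A^{+}_{k_{j+1},r}$ are pairwise disjoint, the telescoping on the right gives
\begin{equation*}
\bigl(j_\ast-[\log 1/\xi]-1\bigr)\,|A^{+}_{k_{j_\ast},\frac34 r}|^{\frac{1}{\kappa_1}}\leqslant \gamma\,\alpha_0^{-\frac{1}{\kappa_1}}\,[\mu(r)]^{\frac{1}{\kappa_1}}\,[\Lambda(x_0,r)]^{\frac{1}{\kappa_1}}\,|B_{\frac34 r}(x_0)|^{\frac{1}{\kappa_1}}.
\end{equation*}

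Finally, I would choose $j_\ast$ so that
\begin{equation*}
\gamma^{\kappa_1}\,\alpha_0^{-1}\,\bigl(j_\ast-[\log 1/\xi]-1\bigr)^{-\kappa_1}\,[\mu(r)]\leqslant \nu_1\,[\mu(r)]^{-\frac{1}{\kappa}}\,[\Lambda(x_0,r)]^{-1-\frac{1}{\kappa}},
\end{equation*}
with $\nu_1,\kappa$ from Lemma~\ref{lem2.6}; this forces $|A^{+}_{k_{j_\ast},r/2}|$ below the threshold \eqref{eq2.8}, so the De\,Giorgi Lemma under $(g_\mu)$, Lemma~\ref{lem2.6}, applies at radius $r/2$ (note \eqref{eq2.6} is exactly the negation of a lower bound of type $r$, which we have ruled out) and yields \eqref{eq2.15}. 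Tracking the dependence of $j_\ast$ on $\mu(r)$ and $\Lambda(x_0,r)$ through the displayed inequality, one reads off that the resulting exponential has the form $\exp\bigl(C_\ast[\mu(r)]^{\bar\beta_1}[\Lambda(x_0,r)]^{\bar\beta_1}\bigr)$ with the same $\bar\beta_1=\tfrac{1}{\kappa_1}\bigl(1+\tfrac1\kappa\bigr)$ as in Lemma~\ref{lem2.7}, completing the proof. The only genuinely delicate point is bookkeeping the power of $\mu(r)$: it enters once through the energy estimate and once more through the choice of $j_\ast$, and one must check these combine into a single power $[\mu(r)]^{\bar\beta_1}$ matching the $\Lambda$-exponent; everything else is a routine transcription of Lemma~\ref{lem2.7}.
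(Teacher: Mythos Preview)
Your proof is correct and follows the paper's argument essentially line by line: the same levels $k_j=\mu_r^+-\omega_r/2^j$, the same use of $(g_\mu)$ to insert the factor $\mu(r)$ into the energy estimate \eqref{eq2.2}, the same De\,Giorgi--Poincar\'e step with \eqref{eq2.13}, the same telescoping sum, and the same choice of $j_\ast$ (your displayed condition is algebraically identical to the paper's $\gamma^{\kappa_1}\alpha_0^{-1}(j_\ast-[\log 1/\xi]-1)^{-\kappa_1}\leqslant \nu_1[\mu(r)]^{-1-1/\kappa}[\Lambda(x_0,r)]^{-1-1/\kappa}$). The exponent bookkeeping you flag as delicate indeed yields $\bar\beta_1=\tfrac{1}{\kappa_1}(1+\tfrac{1}{\kappa})$, matching Lemma~\ref{lem2.7}.
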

\begin{proof}
We provide the proof of \eqref{eq2.15} for $v_{+}$,
while the proof for $v_{-}$ is completely similar.
We set $k_{j}:=\mu_{r}^{+}-\dfrac{\omega_{r}}{2^{j}}$,
$j=[\log 1/\xi]+1,2, \ldots,j_{\ast}$, where $j_{\ast}$ to be chosen. We will assume that
$M_{+}(k_{j_{\ast}},\frac{r}{2}) \geqslant \dfrac{\omega_{r}}{2^{j_{\ast}+1}}$, because in the opposite case, the required \eqref{eq2.15}
is evident. If \eqref{eq2.14} is violated, then $M_{+}(k_{j_{\ast}},\frac{r}{2}) \geqslant r,$ \,and
since
$M_{+}(k_{j},r)\leqslant 2^{-j}\,\omega_{r}$, $j=\overline{[\log 1/\xi]+1,j_{\ast}}$,
then by  $(g_{\mu})$ we obtain that
$$
g^{+}_{B_{r}(x_{0})}\left( \frac{M_{+}(k_{j},r)}{r} \right)\leqslant \gamma \mu(r)
g^{-}_{B_{r}(x_{0})}\left( \frac{M_{+}(k_{j},r)}{r} \right).
$$
Therefore, completely similar to the previous lemma, inequality \eqref{eq2.2}
can be rewritten as
$$
\int\limits_{A^{+}_{k_{j},r}\setminus A^{+}_{k_{j+1},r}}
|\nabla u|\,\zeta^{\,q}\,dx\leqslant \frac{\gamma}{\alpha_{0}}\,\mu(r)\frac{\omega_{r}}{2^{j}}\,r^{n-1}
\Lambda(x_{0}, r)\left( \dfrac{|A^{+}_{k_{j},r}\setminus A^{+}_{k_{j+1},r}|}{|B_{r}(x_{0})|} \right)^{\kappa_{1}},
$$
where $\kappa_{1}=\left(1-\frac{1}{t(p-1)}\right)\left(1-\frac{1}{p}\right)$ and $\zeta\in C_{0}^{\infty}(B_{r}(x_{0}))$, $0\leqslant\zeta\leqslant1$,
$\zeta=1$ in $B_{3r/4}(x_{0})$, $|\nabla\zeta|\leqslant 4/r$.
From this, using \eqref{eq2.13} and De\,Giorgi-Poincar\'{e} Lemma \ref{lem2.1} we obtain
\begin{equation*}
\frac{\omega_{r}}{2^{j}}\left|A^{+}_{k_{j},\frac{3}{4}r}\right|\leqslant \gamma\,r\,\int\limits_{A^{+}_{k_{j},r}\setminus A^{+}_{k_{j+1},r}}
|\nabla u|\,\zeta^{\,q}\,dx\leqslant \frac{\gamma}{\alpha_{0}}\,\mu(r)\,\frac{\omega_{r}}{\,2^{j}}\,r^{n}\,\Lambda(x_{0}, r)
\left( \dfrac{|A^{+}_{k_{j},r}\setminus A^{+}_{k_{j+1},r}|}
{|B_{r}(x_{0})|} \right)^{\kappa_{1}},
\end{equation*}
raising the left and right hand-sides to the power $\dfrac{1}{\kappa_{1}}$ and summing up the resulting inequalities in $j$, $j=[\log 1/\xi]+1,2, \ldots,j_{\ast}$ ,we conclude that
$$
(j_{\ast} -[\log1/ \xi]-1)\left|A^{+}_{k_{j_{\ast}},\frac{3}{4}r}\right|^{\frac{1}{\kappa_{1}}}\leqslant \gamma\,\alpha_{0}^{-\frac{1}{\kappa_{1}}}\,[\mu(r)]^{\frac{1}{\kappa_{1}}} [\Lambda(x_{0}, r)]^{\frac{1}{\kappa_{1}}}\left|B_{\frac{3}{4}r}(x_{0})\right|^{\frac{1}{\kappa_{1}}}.
$$

Choosing $j_{\ast}$ by the condition
$$\gamma^{\kappa_{1}}\,\alpha_{0}^{-1}\,(j_{\ast} -[\log1/ \xi]-1)^{-\kappa_{1}} \leqslant \nu_{1}\,[\mu(r)]^{-1-\frac{1}{\kappa}}\,[\Lambda(x_{0}, r)]^{-1-\frac{1}{\kappa}},$$

by Lemma \ref{lem2.6}  we obtain inequality \eqref{eq2.15}, which proves Lemma \ref{lem2.8}.
\end{proof}


\section{Upper and lower estimates of auxiliary solutions under ($\mathbf{g_{\lambda}}$) condition.
Proof of Theorem \ref{th1.1}}\label{Sec3}
Fix $x_{0}\in \Omega$, let $E\subset B_{r}(x_{0})\subset B_{\rho}(x_{0})\subset B_{R}(x_{0})\subset\Omega$ and consider
the solution $v$ of the following problem
\begin{equation*}
{\rm div} \bigg( H(x, |\nabla v|) \frac{\nabla v}{ |\nabla v|^{2}}\bigg)=0, \quad x\in \mathcal{D}=B_{8\rho}(x_{0})\setminus E, \quad
v- m \psi \in W_{0}(\mathcal{D}),
\end{equation*}
where $m\in [\rho, \lambda(\rho)]$ is some fixed number and $\psi \in W_{0}(\mathcal{D}), \psi=1$ on $E$.

We note that our assumptions on the function $h(x,\cdot)$ imply the monotonicity condition
\begin{equation*}
\bigg( H(x, |\xi|) \frac{\xi}{|\xi|^{2}} -H(x, |\eta|) \frac{\eta}{|\eta|^{2}} ,\,\, \xi- \eta \bigg) >0,\quad \xi, \eta \in \mathbb{R}^{n},\quad \xi \ne \eta,
\end{equation*}
indeed,  by the Cauchy inequality and since $h(x,\cdot)$ increases
\begin{multline*}
\bigg( H(x, |\xi|) \frac{\xi}{|\xi|^{2}} -H(x, |\eta|) \frac{\eta}{|\eta|^{2}} ,\,\, \xi- \eta \bigg)=\\=
\left(\int\limits_{0}^{1}\frac{d}{dt}\left[H(x,|t \xi +(1-t) \eta|)) \frac{t \xi +(1-t) \eta}{|t \xi +(1-t) \eta|^{2}}\right] dt,\,\, \xi-\eta \right)
= \\ = |\xi-\eta|^{2} \int\limits_{0}^{1} \frac{H(x,|t \xi +(1-t) \eta)|)}{|t \xi +(1-t) \eta|^{2}}\,dt+
\int\limits_{0}^{1}\frac{h(x,|t \xi +(1-t) \eta|)}{|t \xi +(1-t) \eta|^{3}}\big|(t \xi+ (1-t) \eta, \,\, \xi-\eta)\big|^{2}\,dt - \\
- 2\int\limits_{0}^{1}\frac{H(x,|t \xi +(1-t) \eta|)}{|t \xi +(1-t) \eta|^{4}}\big|(t \xi+ (1-t) \eta,\,\, \xi-\eta)\big|^{2}\,dt \geqslant
\\ \geqslant \int\limits_{0}^{1} \bigg[\frac{h(x,|t \xi +(1-t) \eta|)}{|t \xi +(1-t) \eta|^{3}}-\frac{H(x,|t \xi +(1-t) \eta|)}{|t \xi +(1-t) \eta|^{4}}\bigg] \big|(t \xi+ (1-t) \eta,\,\, \xi-\eta)\big|^{2}\,dt > 0.
\end{multline*}
So, the existence of the solutions
$v$ follows from the general theory of monotone operators. We will assume that the following
integral identity holds:
\begin{equation}\label{eq3.1}
\int\limits_{\mathcal{D}} H(x, |\nabla v|)\,\frac{\nabla v}{|\nabla v|^{2}} \nabla\varphi \,dx=0
\quad \text{for any } \ \varphi\in W_{0}(\mathcal{D}).
\end{equation}

Testing \eqref{eq3.1} by $\varphi=(v-m)_{+}$ and  by $\varphi=v_{-}$  and using condition
\eqref{eq1.4}, we obtain that $0\leqslant v\leqslant m$.

To formulate our next result, we need the notion of the capacity. For this set
$$
C_{H}(E, B_{8\rho}(x_{0});m):=\,\frac{1}{m}\,\inf\limits_{\varphi\in \mathfrak{M}(E)}
\int\limits_{B_{8\rho}(x_{0})}H(x,m|\nabla \varphi|)\,dx ,
$$
where the infimum is taken over the set $\mathfrak{M}(E)$   of all functions
$\varphi\in W_{0}(B_{8\rho}(x_{0}))$ with $\varphi\geqslant1$ on $E$. If $m=1$,
this definition leads to the standard definition of $C_{H}(E, B_{8\rho}(x_{0}))$
capacity (see, e.g. \cite{HarHastZAn19}).

Further we will assume that
\begin{equation}\label{eq3.2}
g^{-1}_{x_{0}}\bigg(\frac{C_{H}(E, B_{8\rho}(x_{0}); m)}{\rho^{n-1}\,[\Lambda\big(x_{0},\frac{\rho}{4}\big)]^{\bar{c}_{1}}}\bigg) \geqslant \bar{c},
\end{equation}
where $\bar{c}, \bar{c}_{1} > 0 $ to be chosen later depending only on the data.


\subsection{Upper bound for the function $\mathbf{v}$}
We note that in the standard case (i.e. if $p=q$) the upper bound for the function $v$
was proved in \cite{IVSkr1983}
(see also \cite[Chap.\,8, Sec.\,3]{IVSkrMetodsAn1994}, \cite{IVSkrSelWorks}).

\begin{lemma}\label{lem3.1}
There exists $\bar{\beta} >0$ depending only on the data such that
\begin{equation*}
v(x)\leqslant \gamma\,\left[\Lambda\left(x_{0}, \frac{\rho}{4}\right)\right]^{\bar{\beta}}\,\,\rho\,g^{-1}_{x_{0}}\bigg(\frac{C_{H}(E, B_{8\rho}(x_{0}); m)}{\rho^{n-1}}\bigg),\quad x\in B_{\rho}(x_{0})\setminus B_{\frac{\rho}{2}}(x_{0}).
\end{equation*}
\end{lemma}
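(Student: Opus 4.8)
The goal is a pointwise upper bound on the auxiliary solution $v$ in the annulus $B_\rho(x_0)\setminus B_{\rho/2}(x_0)$, expressed through the capacity $C_H(E,B_{8\rho}(x_0);m)$. The natural strategy is a De Giorgi iteration on shrinking balls centered at points of the annulus, but adapted to the fact that $v\le m\le\lambda(\rho)$ is small, so that condition $(g_\lambda)$ is available at the relevant scales. The plan is to fix a point $\bar x\in B_\rho(x_0)\setminus B_{\rho/2}(x_0)$ and to iterate the local energy estimate \eqref{eq2.2}--\eqref{eq2.4} (in its form after $(g_\lambda)$ is invoked, as in the proof of Lemma~\ref{lem2.5}) on balls $B_{r_j}(\bar x)$ with $r_j\downarrow$ some radius comparable to $\rho$ but away from $E$, using truncation levels $k_j$ increasing toward the sup. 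This produces, via the Sobolev embedding and Lemma~\ref{lem2.2}, a smallness-of-measure hypothesis that forces $v(\bar x)$ down to a multiple of a reference level.

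First I would set up the iteration: choose levels $k_j = k_\infty - 2^{-j}\omega$ (or the analogous downward-pointing truncations), cutoffs $\zeta_j$ localized between $B_{r_{j+1}}(\bar x)$ and $B_{r_j}(\bar x)$ with $|\nabla\zeta_j|\le \gamma 2^j/\rho$, and apply \eqref{eq2.4}. Since $v\le m\le\lambda(\rho)$, the quantity $M_+(k_j,r_j)/r_j$ stays in the range where $(g_\lambda)$ applies, so the ratio $g^+/g^-$ in \eqref{eq2.4} is bounded by $\gamma$; this is exactly the mechanism used in Lemma~\ref{lem2.5}. The De Giorgi recursion then reads $y_{j+1}\le \gamma\, 2^{j\gamma}\,[\Lambda(x_0,\rho)]\, y_j^{1+\kappa}$, and by Lemma~\ref{lem2.2} it converges provided $y_0\le \gamma^{-1/\kappa}[\Lambda]^{-(1+1/\kappa)/\kappa}$ or the like. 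The role of the capacity is to provide the starting bound $y_0$: the complementary measure $|B_{r}(\bar x)\cap\{v \text{ close to its sup}\}|$ is controlled because $v$ is a solution vanishing (in the relevant sense) near $E$, and a capacitary/isoperimetric-type estimate relates this measure, together with the energy $\int H(x,|\nabla v|)\,dx$, to $C_H(E,B_{8\rho};m)$. Concretely, testing \eqref{eq3.1} and using Lemma~\ref{lem2.3} (the Orlicz Poincaré inequality of Lieberman) bounds the total energy of $v$ by $m\,C_H(E,B_{8\rho}(x_0);m)$ up to constants, and then the De Giorgi--Poincaré Lemma~\ref{lem2.1} converts an energy bound on an annulus into the required measure-decay input.

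The delicate point, and the main obstacle, is bookkeeping the powers of $\Lambda(x_0,\rho/4)$: each application of the energy estimate and of Lemma~\ref{lem2.2} contributes a factor that is a fixed power of $\Lambda$, and one must track how many iteration steps $j_\ast$ are needed to reach the target level (this number depends on $\Lambda$ through the smallness threshold in Lemma~\ref{lem2.2}), so that the final exponent $\bar\beta$ comes out as an explicit, data-dependent constant. This is precisely the computation carried out in Lemmas~\ref{lem2.7}, \ref{lem2.8}, where a number of the form $\bar\beta_1 = \tfrac{1}{\kappa_1}(1+\tfrac1\kappa)$ emerges; here one expects a similar expression, with the capacitary normalization $g^{-1}_{x_0}\!\big(C_H/\rho^{n-1}\big)$ playing the role of the reference amplitude. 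A secondary technical issue is that $\bar x$ ranges over the annulus, so the radii in the iteration must be chosen uniformly (e.g. $r_j = c\rho(1+2^{-j})$ with $c$ small enough that $B_{r_0}(\bar x)\subset B_{8\rho}(x_0)$ and disjoint from $E$ when $E\subset B_r(x_0)$ with $r$ small), and the constants must not degenerate as $\bar x$ approaches $\partial B_\rho(x_0)$; this is handled by the hypothesis $B_{8\rho}(x_0)\subset\Omega$ and the placement $E\subset B_r(x_0)$. Once these are in place, combining the convergent iteration with the capacitary energy bound yields the stated inequality with a constant $\gamma$ and exponent $\bar\beta$ depending only on the data.
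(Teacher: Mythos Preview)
Your proposal has the right ingredients (a De Giorgi iteration centered at $\bar x$, use of $(g_\lambda)$ to remove the ratio $g^{+}/g^{-}$, and a capacitary bound on the total energy of $v$), but there is a genuine gap at the point where you feed the capacity into the iteration. You write that ``the De\,Giorgi--Poincar\'e Lemma~\ref{lem2.1} converts an energy bound on an annulus into the required measure-decay input'' for $y_0$. Lemma~\ref{lem2.1}, however, produces information only if one already knows a lower bound on $|B_r(\bar x)\cap\{v<l\}|$; on balls centered in the annulus you have no such a-priori information (remember $v=m$ on $E$ and $v=0$ only on $\partial B_{8\rho}$, both far from $\bar x$). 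A direct Chebyshev route $|\{v>k_0\}|\le k_0^{-1}\int v$ is no better: passing from the energy $\int H(x,|\nabla v|)\,dx$ to $\int v\,dx$ via Young's inequality \eqref{eq2.1} forces a factor of $g(x_0,\sup v/\rho)$ into the estimate, so the bound on $\int v$ depends on the very quantity you are trying to control, and a single iteration does not close.

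The paper resolves this with a \emph{double} iteration. One introduces shrinking annuli $F_i=\{\frac{\rho}{4}(1+2^{-i})\le|x-x_0|\le\frac{\rho}{2}(3-2^{-i})\}$ with $M_i=\sup_{F_i}v$. The inner De\,Giorgi loop (in $j$) does not require $y_0$ small; instead the level $k$ is \emph{chosen} so that the recursion converges, yielding
\[
M_i\le \varepsilon M_{i+1}+\gamma\,2^{i\gamma}\varepsilon^{-1/\kappa}\,\Lambda^{1/\kappa}\rho^{-n}\!\int_{F_{i+1}}\! v\,dx+\gamma\rho,\qquad \varepsilon\in(0,1).
\]
The integral is then estimated via the energy of the truncation $v_{M_{i+1}}=\min(v,M_{i+1})$, and the key step you are missing is the test function $\varphi=v_{M_{i+1}}-(M_{i+1}/m)\,v$ in \eqref{eq3.1}, which gives $\int_{\mathcal D}H(x,|\nabla v_{M_{i+1}}|)\,dx\le\gamma M_{i+1}\,C_H(E,B_{8\rho};m)$. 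Combining these produces an outer recursion in $i$, not for $M_i$ but for $g(x_0,M_i/\rho)$:
\[
g(x_0,M_i/\rho)\le \tfrac12\, g(x_0,M_{i+1}/\rho)+\gamma\,2^{i\gamma}\,\Lambda^{1+\frac{q-1}{\kappa}}\,\rho^{1-n}C_H,
\]
and iterating in $i$ yields the lemma with $\bar\beta=\frac{1}{p-1}\big(1+\frac{q-1}{\kappa}\big)$, which is different from the $\bar\beta_1$-type exponent you anticipated. Lemma~\ref{lem2.1} and Lemma~\ref{lem2.3} play no role here.
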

For $i, j=0,1,2, \ldots$ set $k_{j}:= k(1-2^{-j})$, $k>0$ to be chosen later,
$\rho_{i,j}:=2^{-i-j-3}\rho$,
$$
M_{i}:=\esssup\limits_{F_{i}}v, \quad
F_{i}:=\left\{ x\in \mathcal{D}: \frac{\rho}{4}(1+2^{-i})\leqslant |x-x_{0}|
\leqslant \frac{\rho}{2}(3-2^{-i}) \right\}.
$$
Fix $\overline{x}\in F_{i}$ and suppose that $(v(\overline{x})-k)_{+}\geqslant\rho$,
then $M_{i,j}(k_{j}):=\esssup\limits_{B_{\rho_{i,j}}(\overline{x})}(v-k_{j})\geqslant (v(\overline{x})-k)_{+}\geqslant\\ \geqslant \rho\geqslant \rho_{i,j}$.
And let
$\zeta_{i,j}\in C_{0}^{\infty}(B_{\rho_{i,j}}(\overline{x})), \ \
0\leqslant\zeta_{i,j}\leqslant 1, \ \
\zeta_{i,j}=1 \ \text{in} \ B_{\rho_{i,j+1}}(\overline{x}), \ \
|\nabla \zeta_{i,j}|\leqslant 2^{i+j+4}/\rho.$

By our choice  we have
$v(x)\leqslant m\leqslant 2\,M\,\lambda(\rho)\leqslant\gamma\, 2^{(i+j)\gamma}\,\lambda(\rho_{i,j})$,
$x\in B_{\rho_{i,j}}(\overline{x})$.
Therefore condition ($ g_{\lambda}$) is applicable in $B_{\rho_{i,j}}(\overline{x})$ and we have
by ($ g_{\lambda}$) with $K=\gamma\, 2^{(i+j)\gamma}$ that
$$
g^{+}_{B_{\rho_{i,j}}(\overline{x})}\left( \frac{M_{i,j}(k_{j+1})}{\rho_{i,j}}\right)
\leqslant 2^{(i+j)\gamma}
g^{-}_{B_{\rho_{i,j}}(\overline{x})}\left(\frac{M_{i,j}(k_{j+1})}{\rho_{i,j}}\right).
$$
So, by \eqref{eq2.4} we obtain
\begin{multline*}
\int\limits_{B_{\rho_{i,j}}(\overline{x})}
|\nabla(v-k_{j+1})_{+}|\,\zeta_{i,j}^{\,q}\,dx
\leqslant
\gamma\,2^{\gamma(i+j)}\,M_{i+1}\,\Lambda\left(x_{0}, \frac{\rho}{4}\right)\times\\
\times\rho^{n-1}
\,\bigg(\frac{|A^{+}_{\rho_{i,j}, k_{j+1}}|}{|B_{\rho_{i,j}}(\overline{x})|}\bigg)^{\left(1-\frac{1}{t(p-1)}\right)\left(1-\frac{1}{p}\right)+\left(1-\frac{1}{s}\right)\frac{1}{q}} \leqslant \qquad\qquad\qquad\qquad\\ \leqslant \gamma\,2^{\gamma(i+j)}
M_{i+1}\,\Lambda\left(x_{0}, \frac{\rho}{4}\right)\,\rho^{-n\kappa}\,k^{- 1- \kappa +\frac{1}{n}}\,\left(\,\,\int\limits_{B_{\rho_{i,j}}(\overline{x})} (v-k_{j})_{+}\,dx\right)^{1+\kappa -\frac{1}{n}},
\end{multline*}
here $A^{+}_{\rho_{i,j}, k_{j+1}}:=
B_{\rho_{i,j}}(\overline{x})\cap \{v>k_{j+1}\}$ and $\kappa= \dfrac{1}{n}-\dfrac{1}{tp}-\dfrac{1}{sq}-\dfrac{1}{p}+\dfrac{1}{q} > 0$.

From this, similarly to that of Section \ref{Sec2} and
choosing $k$ from the condition
$$
k^{1+\frac{1}{\kappa}}= \gamma\,2^{i\gamma}M_{i+1}^{\frac{1}{\kappa}} \left[\Lambda\left(x_{0}, \frac{\rho}{4}\right)\right]^{\frac{1}{\kappa}}\,\rho^{-n}\int\limits_{B_{\rho/2^{i+3}}(\overline{x})} v\,dx ,
$$
since $\overline{x}\in F_{i}$ is an arbitrary point, using the Young inequality and keeping in mind our assumption that $v(\overline{x})\geqslant k+\rho$ ,
from the previous we obtain for any $\varepsilon\in(0,1)$
\begin{equation}\label{eq3.3}
M_{i}\leqslant \varepsilon M_{i+1}+ \frac{\gamma\,2^{i\gamma}}{\varepsilon^{\frac{1}{\kappa}}\rho^{\,n}}\,\left[\Lambda\left(x_{0}, \frac{\rho}{4}\right)\right]^{\frac{1}{\kappa}}\,\int\limits_{F_{i+1}} v\,dx+\gamma\rho,
\quad i=0,1,2, \ldots.
\end{equation}

Let us estimate the second term on the right-hand side of \eqref{eq3.3}. For this we assume that \\ $M_{0}\geqslant \rho$, because otherwise,
by \eqref{eq3.2} the upper estimate  is evident. Since $\rho\leqslant M_{0}\leqslant M_{i}\leqslant$ $ \leqslant  2\,M\,\lambda(\rho), i=0,1,2,...$, condition {\rm ($g_{\lambda}$)} is applicable. Set $v_{M_{i+1}}:=\min\{v, M_{i+1}\}$,
by \eqref{eq2.1} and $(g_{\lambda})$ with $K=2M$ we have with arbitrary $\varepsilon_{1}\in(0,1)$
\begin{multline*}
\int\limits_{F_{i+1}} v\,dx=\int\limits_{F_{i+1}} v_{M_{i+1}}\,dx
\leqslant \gamma\rho\int\limits_{\mathcal{D}} |\nabla v_{M_{i+1}}|\,dx
\\
= \gamma\rho\int\limits_{\mathcal{D}}
|\nabla v_{M_{i+1}}|\,\frac{g^{-}_{B_{8\rho}(x_{0})}(M_{i}/\rho)}{g^{-}_{B_{8\rho}(x_{0})}(M_{i}/\rho)}\,dx
\leqslant
\varepsilon_{1} M_{i}\,\rho^{n}+
 \frac{\gamma\varepsilon_{1}^{1-q}\Lambda\big(x_{0},\frac{\rho}{4}\big)\rho}{g^{-}_{B_{8\rho}(x_{0})}(M_{i}/\rho)}
\int\limits_{\mathcal{D}} a(x)G(x,|\nabla v_{M_{i+1}}|)\,dx \leqslant \\
\leqslant \varepsilon_{1} M_{i}\,\rho^{n}+\frac{\gamma\varepsilon_{1}^{1-q}\Lambda\big(x_{0},\frac{\rho}{4}\big)\rho}{g(x_{0},M_{i}/\rho)}\,\int\limits_{\mathcal{D}} H (x,|\nabla v_{M_{i+1}}|)\,dx.
\end{multline*}
Collecting the last two inequalities and choosing $\varepsilon_{1}$ from the condition
$$\gamma\,2^{i\gamma}\,\left[\Lambda\left(x_{0}, \frac{\rho}{4}\right)\right]^{\frac{1}{\kappa}}\varepsilon^{-\frac{1}{\kappa}}\varepsilon_{1}=\varepsilon,$$we obtain
$$
M_{i}\leqslant 2\varepsilon M_{i+1}+\gamma\,2^{i\gamma}\varepsilon^{-\gamma}
\frac{\big[\Lambda\big(x_{0}, \frac{\rho}{4}\big)\big]^{1+\frac{q-1}{\kappa}}\,\rho^{1-n}}{g(x_{0}, M_{i}/\rho)}
\int\limits_{\mathcal{D}} H (x,|\nabla v_{M_{i+1}}|)\,dx+\gamma\rho,
$$
which by \eqref{eq2.1} implies
\begin{equation}\label{eq3.4}
\begin{aligned}
M_{i}\,&g(x_{0},M_{i}/\rho)\leqslant
3\varepsilon M_{i+1}\,g(x_{0},M_{i+1}/\rho)
\\
&+
\gamma\,2^{i\gamma}\varepsilon^{-\gamma}\,\left[\Lambda\left(x_{0},\frac{\rho}{4}\right)\right]^{1+\frac{q-1}{\kappa}}\,\rho^{1-n}\int
\limits_{\mathcal{D}}H (x,|\nabla v_{M_{i+1}}|)\,dx
+\gamma\rho, \quad i=0,1,2,\ldots .
\end{aligned}
\end{equation}
Let $\psi \in W_{0}(\mathcal{D})$ be such that
$\frac{1}{m} \int\limits_{\mathcal{D}} H(x, m|\,\nabla \psi|)\,dx \leqslant C_{H}( E, B_{8\rho}(x_{0}), m) + \gamma \rho^{n}$.
Testing identity \eqref{eq3.1} by $\varphi=v-m\psi$,
by the Young inequality \eqref{eq2.1} and using condition $(g_{\lambda})$ we obtain
\begin{equation}\label{eq3.5}
\int\limits_{\mathcal{D}} H (x,|\nabla v|)\,dx
\leqslant \gamma m \int\limits_{B_{2r}(x_{0})}
H (x,m|\nabla \psi|)\,|\nabla \psi|\,dx\leqslant \gamma\,m
\big( C_{H}(E, B_{8\rho}(x_{0}), m) +\rho ^{n}\big).
\end{equation}
Testing \eqref{eq3.1} by $\varphi=v_{M_{i+1}}-M_{i+1}v/m$ and using the Young inequality \eqref{eq2.1} and \eqref{eq3.5}, we have
$$
\int\limits_{\mathcal{D}} H (x,|\nabla v_{M_{i+1}}|)\,dx
\leqslant \frac{\gamma M_{i+1}}{m}
\int\limits_{\mathcal{D}} H (x,|\nabla v|)\,dx
\leqslant
\gamma M_{i+1}\,\big(C_{H}(E, B_{8\rho}(x_{0}), m) +\rho ^{n}\big).
$$
This inequality and \eqref{eq3.4} imply that
$$
\begin{aligned}
M_{i}\,g(x_{0},M_{i}/\rho)&\leqslant
3\varepsilon M_{i+1}\,g(x_{0},M_{i+1}/\rho)
\\
&+
\gamma\,2^{i\gamma}\varepsilon^{-\gamma} M_{i+1}\,\left[\Lambda\left(x_{0},\frac{\rho}{4}\right)\right]^{1+\frac{q-1}{\kappa}}\bigg( \rho^{1-n}\,C_{H}(E, B_{8\rho}(x_{0}), m) +\rho \bigg),
\quad i=0,1,2,\ldots ,
\end{aligned}
$$
which yields for any $\varepsilon_{2}\in (0,1)$
\begin{multline*}
g(x_{0},M_{i}/\rho) \leqslant
\frac{1}{\varepsilon_{2}}\, \frac{M_{i}}{M_{i+1}}\, g(x_{0},M_{i}/\rho)
+\varepsilon_{2}^{p-1} g(x_{0},M_{i+1}/\rho) \leqslant
\left(\frac{3\varepsilon}{\varepsilon_{2}}+\varepsilon_{2}^{\,p-1}\right)
g(x_{0},M_{i+1}/\rho)+
\\
+\frac{\gamma\,2^{i\gamma}}{(\varepsilon\varepsilon_{2})^{\gamma}}\,\left[\Lambda\left(x_{0},\frac{\rho}{4}\right)\right]^{1+\frac{q-1}{\kappa}}
\left( \rho^{1-n}\,C_{H}(E, B_{8\rho}(x_{0}), m) +\rho \right) \leqslant
\\
\leqslant
\left(\frac{3\varepsilon}{\varepsilon_{2}}+\varepsilon_{2}^{\,p-1}\right)
g(x_{0},M_{i+1}/\rho)+
\frac{\gamma\,2^{i\gamma}}{(\varepsilon\varepsilon_{2})^{\gamma}}\,\left[\Lambda\left(x_{0},\frac{\rho}{4}\right)\right]^{1+\frac{q-1}{\kappa}}\,\rho^{1-n}\,C_{H}(E, B_{8\rho}(x_{0}), m)
,
\quad i=0,1,2,\ldots,
\end{multline*}
provided that $\bar{c}$ in  \eqref{eq3.2} is chosen to satisfy $\bar{c} \geqslant 1$.
Iterating the last inequality and choosing $\varepsilon_{2}$ and $\varepsilon$ small enough,
we arrive at
$$
g(x_{0},M_{0}/\rho)\leqslant \gamma\,\left[\Lambda\left(x_{0},\frac{\rho}{4}\right)\right]^{1+\frac{q-1}{\kappa}}\,\rho^{1-n}\,
C_{H}(E, B_{8\rho}(x_{0}), m),
$$
which proves the upper bound of the function $v$ with
$$\bar{\beta}=\frac{1}{p-1}\,\left(1 +\frac{q-1}{\kappa}\right).$$


\subsection{Lower bound for the function $\mathbf{v}$}

The main step in the proof of the lower bound is the following lemma.
\begin{lemma}\label{lem3.2}
There exist  numbers $\varepsilon$, $\vartheta\in(0,1)$, $\bar{\beta}_{2}, \bar{\beta}_{3} >0$ depending only on the data
such that
\begin{multline}\label{eq3.6}
\left| \left\{K_{\rho/4,\, 2\rho}:
v(x)\leqslant \varepsilon\,m\, \left[\Lambda\left(x_{0}, \frac{\rho}{4}\right)\right]^{-\bar{\beta}_{2}}\rho\, g^{-1}_{x_{0}}
\left( \frac{C_{H}(E, B_{8\rho}(x_{0});m)}{\rho^{\,n-1}} \right)\right\} \right|
\leqslant \\ \leqslant \left(1-\vartheta\,\left[\Lambda\left(x_{0}, \frac{\rho}{4}\right)\right]^{-\bar{\beta}_{3}} \right) |K_{\rho/4,\, 2\rho}|,
\end{multline}
where $K_{\rho_{1},\,\rho_{2}}:=B_{\rho_{2}}(x_{0})\setminus B_{\rho_{1}}(x_{0})$, \quad $0 < \rho_{1} < \rho_{2}$.
\end{lemma}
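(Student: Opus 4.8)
The goal is a measure-theoretic lower bound for the auxiliary solution $v$ on the annulus $K_{\rho/4,\,2\rho}$, which will later feed into the expansion-of-positivity machinery (Lemma~\ref{lem2.7}) to produce the pointwise lower bound and ultimately Theorem~\ref{th1.1}. The strategy is the classical one going back to Maz'ya and Landis: compare $v$ with the capacitary potential of $E$ and use a De\,Giorgi–Poincar\'e type argument on the \emph{lower} level sets $\{v < k\}$, exploiting the fact that the set $\{v \geqslant m\}\supseteq E$ has definite capacity by assumption \eqref{eq3.2}. First I would record the energy bound \eqref{eq3.5}, namely $\int_{\mathcal D} H(x,|\nabla v|)\,dx \leqslant \gamma\,m\,(C_H(E,B_{8\rho}(x_0),m)+\rho^n)$, which after using condition $(g_\lambda)$ and the definition of capacity can be rewritten as a lower bound: $\int_{\mathcal D}|\nabla v|\,dx$ (or a suitable truncated version) is bounded \emph{below} by roughly $\rho^{\,n-1}\,m\,g_{x_0}^{-1}\!\big(\rho^{1-n}C_H\big)$, up to powers of $\Lambda(x_0,\rho/4)$ coming from the local oscillation of $g$.

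\textbf{The iteration on lower level sets.} Set $k_j := k(1 - 2^{-j})$ with $k$ of size comparable to $\varepsilon\,m\,[\Lambda(x_0,\rho/4)]^{-\bar\beta_2}\rho\, g_{x_0}^{-1}\big(\rho^{1-n}C_H\big)$ as in the statement, and consider the sets $A^-_{k_j,r} = B_r(x_0)\cap\{v < k_j\}$ for $r$ in the annulus. Applying the local energy estimate \eqref{eq2.3} to $v$ on dyadic balls covering the annulus, together with De\,Giorgi--Poincar\'e (Lemma~\ref{lem2.1}) connecting the measure of $\{v<k_j\}$ to the measure of $\{v<k_{j+1}\}\setminus\{v<k_j\}$ via the gradient integral, one derives a recursive inequality of the form
$$
\Big(\tfrac{|A^-_{k_{j+1},\,2\rho}|}{|B_{2\rho}|}\Big)^{\kappa_1}
\;\leqslant\; \gamma\,\Lambda\big(x_0,\tfrac{\rho}{4}\big)\;
\Big(\tfrac{|A^-_{k_j,\,2\rho}|\setminus|A^-_{k_{j+1},\,2\rho}|}{|B_{2\rho}|}\Big)^{\kappa_1}
\;+\;(\text{contribution from }\{v\ge m\}\text{ being large}),
$$
with $\kappa_1 = (1-\tfrac1{t(p-1)})(1-\tfrac1p)$ as in Lemma~\ref{lem2.7}. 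Raising to the power $1/\kappa_1$ and summing the telescoping differences over $j = [\log(1/\varepsilon)],\dots, j_\ast$ yields
$$
(j_\ast - [\log(1/\varepsilon)])\,\Big(\tfrac{|A^-_{k_{j_\ast},\,2\rho}|}{|B_{2\rho}|}\Big)
\;\leqslant\; \gamma\,[\Lambda(x_0,\tfrac{\rho}{4})]^{1/\kappa_1},
$$
so that choosing $j_\ast$ (hence the threshold $k$, hence the exponents $\bar\beta_2,\bar\beta_3$) suitably large in terms of a negative power of $\Lambda(x_0,\rho/4)$ forces
$|A^-_{k_{j_\ast},\,2\rho}| \leqslant \big(1-\vartheta[\Lambda(x_0,\tfrac\rho4)]^{-\bar\beta_3}\big)|K_{\rho/4,\,2\rho}|$, which is exactly \eqref{eq3.6} once we check that $v \geqslant k_{j_\ast}$ on the complementary portion. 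The choice of $k$ being proportional to $\rho\, g_{x_0}^{-1}(\rho^{1-n}C_H)$ is what ties the construction to the capacity hypothesis \eqref{eq3.2}; the latter guarantees $k \gtrsim \rho$, so that condition $(g_\lambda)$ is applicable on the relevant scales (since $M_\pm(k_j,r)/r$ stays in the admissible range $[1, \lambda(r)K]$) and the logarithmic-type obstruction \eqref{eq2.6} is automatically ruled out.

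\textbf{Main obstacle.} The delicate point is \emph{how the powers of $\Lambda(x_0,\rho/4)$ accumulate}. Each application of the energy estimates \eqref{eq2.2}--\eqref{eq2.4} and of condition $(g_\lambda)$ introduces a factor $\Lambda(x_0,\rho/4)$ (with the exponent inflated by the inverse of $\kappa$ or $\kappa_1$), and one must keep these bookkeeping exponents uniform in $j$ and in the dyadic covering scale, so that the final $\bar\beta_2,\bar\beta_3$ depend only on $n,p,q,t,s$. The technical heart is to verify that the ``bad'' term in the recursion — the part of the energy carried by $\{v\geqslant m\}$, controlled via \eqref{eq3.5} and the capacity — is, after inserting the choice of $k$, genuinely of lower order than the ``good'' term $|A^-_{k_j}\setminus A^-_{k_{j+1}}|$, uniformly over $j\leqslant j_\ast$. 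This is where the precise form of $\Lambda$ (the product structure with the exponents $\tfrac1{tp}+\tfrac1{sp}$ and $\tfrac{q-1}{tp(p-1)}+\tfrac1{sq}$) and the assumption \eqref{eq1.3} on $t,s$ must be used quantitatively; everything else is a routine, if lengthy, De\,Giorgi iteration of the type already carried out in Lemmas~\ref{lem2.5}--\ref{lem2.7}.
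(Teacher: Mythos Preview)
Your proposal has a genuine circularity. The De\,Giorgi--Poincar\'e iteration you sketch is modeled on the expansion-of-positivity Lemmas~\ref{lem2.7}--\ref{lem2.8}, but those lemmas take as \emph{input} a lower bound on the measure of the ``good'' set $\{v\geqslant k_j\}$ (this is the factor $|B_r\setminus A_{l,r}|$ in Lemma~\ref{lem2.1}, which is bounded below via the hypothesis \eqref{eq2.10}). In Lemma~\ref{lem3.2} no such measure hypothesis is available --- establishing it is precisely the goal. Your recursion on $|A^-_{k_j}|$ therefore cannot start: to pass from the gradient integral to $|A^-_{k_{j+1}}|$ via Lemma~\ref{lem2.1} you must divide by $|B_r\setminus A^-_{k_j}|$, and you have no a~priori control on that quantity. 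The remark that ``$\{v\geqslant m\}\supseteq E$ has definite capacity'' does not help, because $E\subset B_\rho(x_0)$ need not intersect the annulus $K_{\rho/4,2\rho}$ at all, and capacity is not measure.

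The paper's argument avoids this circularity by a completely different mechanism that you do not invoke: it combines the capacity \emph{lower} bound on the energy, $m\,C_H\leqslant\int_{\mathcal D}H(x,|\nabla v|)\,dx$, with two carefully chosen test functions ($\varphi=v-m\zeta_1^q$ and $\varphi=v\zeta_2^q$) to show that $C_H$ is controlled from above by $\int_{K_{\rho/4,2\rho}}b(x)\,G^+(v/\rho)\,dx$. Then one splits the annulus into $K'=\{v\text{ small}\}$ and $K''=\{v\text{ large}\}$; the integral over $K'$ is small by direct estimation, and the integral over $K''$ is controlled using the \emph{upper bound} for $v$ from Lemma~\ref{lem3.1}. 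This forces $|K''|\geqslant\gamma^{-1}[\Lambda]^{-\bar\beta_3}\rho^n$, which is \eqref{eq3.6}. The key missing ingredient in your plan is Lemma~\ref{lem3.1}: without a pointwise upper bound on $v$ matching the desired lower threshold, there is no way to convert the integral lower bound into a measure lower bound. Your sketch never uses Lemma~\ref{lem3.1}, and the ``lower bound on $\int_{\mathcal D}|\nabla v|\,dx$'' you extract from \eqref{eq3.5} is in the wrong direction (that inequality is an upper bound) and would not in any case be the right quantity to feed into a level-set iteration on the annulus.
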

\begin{proof}
Let $\zeta_{1}\in C_{0}^{\infty}(B_{\rho}(x_{0}))$, $0\leqslant \zeta_{1}\leqslant1$,
$\zeta_{1}=1$ in $B_{\rho/2}(x_{0})$, $|\nabla \zeta_{1}|\leqslant 2/\rho$. Testing
\eqref{eq3.1} by $\varphi=v-m\,\zeta_{1}^{\,q}$ and using condition ($g_{\lambda}$) with $K=2\,M$
and the Young inequality \eqref{eq2.1},
we obtain for any $\varepsilon_{1}\in (\rho, 2\,M\,\lambda(\rho))$
$$
\begin{aligned}
\int\limits_{\mathcal{D}} H (x, |\nabla v|)\,dx
&\leqslant
\frac{\gamma m}{\rho} \int\limits_{K_{\rho/2,\, \rho}}
h(x, |\nabla v|)\,\zeta_{1}^{\,q-1} \,dx\leqslant
\\
&\leqslant\frac{\gamma m}{\varepsilon_{1}}\int\limits_{K_{\rho/2,\, \rho}}
H(x, |\nabla v|)\,dx+\frac{\gamma m}{\rho}\int\limits_{K_{\rho/2,\, \rho}}
h(x,\varepsilon_{1}/\rho)\,dx\leqslant
\\
&\leqslant \frac{\gamma m}{\varepsilon_{1}}\int\limits_{K_{\rho/2,\, \rho}} H (x, |\nabla v|)\,dx+\frac{\gamma m}{\rho}\int\limits_{K_{\rho/2,\, \rho}} b(x) g(x,\varepsilon_{1}/\rho)\,dx\leqslant
\\
&\leqslant \frac{\gamma m}{\varepsilon_{1}}\int\limits_{K_{\rho/2,\, \rho}} H (x, |\nabla v|)\,dx + \gamma m \Lambda\left(x_{0}, \frac{\rho}{4}\right)\,g(x_{0}, \varepsilon_{1}/\rho)\,\rho^{n-1}.
\end{aligned}
$$
Let $\zeta_{2}\in C_{0}^{\infty}(K_{\rho/4,\, 2\rho})$, $0\leqslant \zeta_{2}\leqslant1$,
$\zeta_{2}=1$ in $K_{\rho/2,\, \rho}$, $|\nabla \zeta_{2}|\leqslant 4/\rho$.
Testing \eqref{eq3.1} by $\varphi=v\,\zeta_{2}^{\,q}$ and using the Young inequality
\eqref{eq2.1},
we estimate the first term on the right-hand side of the previous inequality as follows:
\begin{multline*}
\int\limits_{K_{\rho/2,\, \rho}}\,H (x, |\nabla v|)\,dx\leqslant
\int\limits_{K_{\rho/4,\, 2\rho}}\,H (x, |\nabla v|)\,\zeta_{2}^{\,q}\,dx\leqslant
\gamma \int\limits_{K_{\rho/4,\, 2\rho}}\,H (x, v/\rho)\,\zeta_{2}^{\,q}\,dx\leqslant \\
\leqslant \gamma \int\limits_{K_{\rho/4,\, 2\rho}}\,b(x)\,G(x, v/\rho)\,dx \leqslant
\gamma \int\limits_{K_{\rho/4,\, 2\rho}}b(x)\,G^{+}_{K_{\rho/4,\, 2\rho}}(v/\rho)\,dx,
\end{multline*}
here we  used the notation $G^{+}_{K_{\rho/4,2\rho}}(v/\rho)=\sup\limits_{x \in K_{\rho/4,2\rho}}G(x, v/\rho)$.
Combining the last two inequalities and using the definition of capacity,
we obtain
\begin{multline*}
C_{H}(E, B_{8\rho}(x_{0});m)\leqslant
\frac{1}{m}\int\limits_{\mathcal{D}}H (x, |\nabla v|)\,dx
\leqslant
\frac{\gamma}{\varepsilon_{1}}\int\limits_{K_{\rho/4,2\rho}}b(x)
G^{+}_{K_{\rho/4,\, 2\rho}}( v/\rho)\,dx+\\ +\gamma\,\Lambda\left(x_{0}, \frac{\rho}{4}\right)\, g(x_{0},\varepsilon_{1}/\rho)\rho^{n-1}.
\end{multline*}
Choose $\varepsilon_{1}$ by the condition
$$
\Lambda\left(x_{0}, \frac{\rho}{4}\right)\,g(x_{0},\varepsilon_{1}/\rho)=\overline{\varepsilon}_{1}\,
\rho^{1-n}\,C_{H}(E, B_{8\rho}(x_{0});m),
\quad \overline{\varepsilon}_{1}\in (0,1).
$$
By \eqref{eq3.2} $\varepsilon_{1}\geqslant 8\rho$ if
$\bar{c}=\overline{c}(\overline{\varepsilon}_{1})$
is large enough and if $\bar{c}_{1} \geqslant 1$. To apply condition  {\rm (${\rm g}_{\lambda}$)}, we still need to check
the inequality  $\varepsilon_{1}\leqslant 2\,M\,\lambda(\rho)$.
Indeed, let $\psi\in C_{0}^{\infty}(B_{2\rho}(x_{0}))$, $0\leqslant \psi\leqslant1$,
$\psi=1$ in $B_{\rho}(x_{0})$ and $|\nabla \psi|\leqslant 2/\rho$, then by {\rm ($ g_{1}$)} and {\rm ($ g_{\lambda}$)}
with $K=2\,M$ we have
\begin{multline*}
C_{H}(E, B_{8\rho}(x_{0});m)\leqslant C_{H}(B_{\rho}(x_{0}), B_{8\rho}(x_{0});m)
\\
\leqslant
\frac{1}{m}\,\int\limits_{B_{8\rho}(x_{0})}
H(x, m|\nabla \psi|)\,dx
\leqslant
\frac{\gamma}{\rho} \int\limits_{B_{2\rho}(x_{0})}b(x)\,g(x, m/\rho)\,dx
\leqslant \gamma\,\Lambda\left(x_{0}, \frac{\rho}{4}\right)\,g(x_{0},m/\rho)\,\rho^{n-1},
\end{multline*}
and therefore, if $\overline{\varepsilon}_{1}=1/2\gamma$, then
$\varepsilon_{1}\leqslant \rho\, g^{-1}_{x_{0}}
\big( \overline{\varepsilon}_{1} \gamma\, g(x_{0}, m/\rho) \big)
\leqslant m \leqslant 2\,M\,\lambda(\rho)$.
So, by our choices, from the previous, we obtain
\begin{equation}\label{eq3.7}
C_{H} (E, B_{8\rho}(x_{0});m)
\leqslant \frac{\gamma}{\varepsilon_{1}}
\int\limits_{K_{\rho/4, 2\rho}} b(x)\,G^{+}_{K_{\rho/4,2\rho}}(v/\rho)\,dx.
\end{equation}
Let us estimate the term on the right-hand side of \eqref{eq3.7}.
For this we decompose $K_{\rho/4, 2\rho}$ as
$K_{\rho/4, 2\rho}=K'_{\rho/4, 2\rho}\cup K''_{\rho/4, 2\rho}$, where
$$
K'_{\rho/4, 2\rho}:=K_{\rho/4, 2\rho}\cap
\left\{ v\leqslant \varepsilon \, g^{-1}_{x_{0}}
\left( \frac{C_{H}(E, B_{8\rho}(x_{0});m)}{\rho^{\,n-1}\,[\Lambda\left(x_{0}, \frac{\rho}{4}\right)]^{\bar{\beta}_{4}}} \right)
 \right\},
 K''_{\rho/4, 2\rho}:=K_{\rho/4, 2\rho}\setminus K'_{\rho/4, 2\rho},
$$
and $\varepsilon\in (0,1), \bar{\beta}_{4} >0 $ to be determined  later.
If $\bar{c}^{\frac{1}{q-1}} \geqslant \dfrac{1}{\varepsilon}$ and $\bar{c}_{1} \geqslant \bar{\beta}_{4}$ then by \eqref{eq3.2}
$$\rho\leqslant \varepsilon \rho g^{-1}_{x_{0}}
\left( \frac{C_{H}(E, B_{8\rho}(x_{0});m)}{\rho^{\,n-1}[\Lambda\big(x_{0}, \frac{\rho}{4}\big)]^{\bar{\beta}_{4}}} \right) \leqslant 1 ,$$
and by {\rm (${\rm g}_{\lambda}$)} with $K=1$ we have
\begin{multline}\label{eq3.8}
\frac{\gamma}{\varepsilon_{1}}\int\limits_{K'_{\rho/4, 2\rho}}
b(x) G^{+}_{K_{\rho/4, 2\rho}}(v/\rho)\,dx \leqslant\\
\leqslant \frac{\gamma}{\varepsilon_{1}}\,\Lambda\left(x_{0}, \frac{\rho}{4}\right)\,G^{+}_{K_{\rho/4, 2\rho}}\bigg(\varepsilon  g^{-1}_{x_{0}}
\left( \frac{C_{H}(E, B_{8\rho}(x_{0});m)}{\rho^{\,n-1}[\Lambda\big(x_{0}, \frac{\rho}{4}\big)]^{\bar{\beta}_{4}}} \right)\bigg) \rho^{n}\leqslant \qquad\qquad\qquad\\
\leqslant \frac{\gamma}{\varepsilon_{1}}\Lambda\left(x_{0},\frac{\rho}{4}\right)G\left(x_{0}, \varepsilon g^{-1}_{x_{0}}
\left( \frac{C_{H}(E, B_{8\rho}(x_{0});m)}{\rho^{\,n-1}\left[\Lambda\left(x_{0}, \frac{\rho}{4}\right)\right]^{\bar{\beta}_{4}}} \right)\right)
 \leqslant \\ \leqslant \varepsilon \gamma\,\left[\Lambda\left(x_{0},\frac{\rho}{4}\right)\right]^{\frac{p}{p-1}-p\bar{\beta}_{4}}\,
C_{H}(E, B_{8\rho}(x_{0});m). \qquad\qquad\qquad\qquad
\end{multline}
By the upper bound for the function $v$, \eqref{eq3.2}
and our choice of $\varepsilon_{1}$, we obtain
\begin{equation}\label{eq3.9}
\frac{\gamma}{\varepsilon_{1}}\int\limits_{K''_{\rho/4, 2\rho}}
b(x)\,G^{+}_{K_{\rho/4, 2\rho}}(v/\rho)\,dx
\leqslant
\gamma(\varepsilon)\,\left[\Lambda\left(x_{0},\frac{\rho}{4}\right)\right]^{1+q\bar{\beta}}\,\rho^{-n}\,C_{H}(E, B_{8\rho}(x_{0});m)\,
|K''_{\rho/4, 2\rho}|.
\end{equation}
Collecting estimates \eqref{eq3.7} -- \eqref{eq3.9} we obtain
\begin{multline*}
C_{H}(E, B_{8\rho}(x_{0});m) \leqslant \varepsilon \gamma\,\left[\Lambda\left(x_{0}, \frac{\rho}{4}\right)\right]^{\frac{p}{p-1}-p\bar{\beta}_{4}}\,
C_{H}(E, B_{8\rho}(x_{0});m)+\\ +\gamma(\varepsilon)\,\left[\Lambda\left(x_{0},\frac{\rho}{4}\right)\right]^{1+q\bar{\beta}}\,\rho^{-n}C{H}(E, B_{8\rho}(x_{0});m)\,|K''_{\rho/4, 2\rho}|,
\end{multline*}
choosing $\varepsilon$, $\beta$ from the conditions
$\varepsilon\gamma=1/2$, $\bar{\beta}_{4}= \frac{1}{p-1}$,
 we arrive at
\begin{multline*}
\left|\left\{K_{\rho/4, 2\rho} : v\geqslant \varepsilon \rho\, \left[\Lambda\left(x_{0},\frac{\rho}{4}\right)\right]^{-\frac{1}{(p-1)^{2}}}g^{-1}_{x_{0}}
\left( \frac{C_{H}(E, B_{8\rho}(x_{0});m)}{\rho^{\,n-1}} \right)
 \right\}\right| \geqslant \\ \geqslant
\left|\left\{K_{\rho/4, 2\rho} : v\geqslant \varepsilon \rho\, g^{-1}_{x_{0}}
\left( \frac{C_{H}(E, B_{8\rho}(x_{0});m)}{\rho^{\,n-1}[\Lambda\big(x_{0},\frac{\rho}{4}\big)]^{\frac{1}{p-1}}} \right)
 \right\}\right| \geqslant \gamma^{-1}\,\left[\Lambda\left(x_{0},\frac{\rho}{4}\right)\right]^{-1-q\bar{\beta}}\rho^{n}.
\end{multline*}
This completes the proof of the lemma with $\bar{\beta}_{2}= \frac{1}{(p-1)^{2}}$ and $\bar{\beta}_{3}= 1+ q\bar{\beta}$.
\end{proof}

\subsection{Proof of Theorem \ref{th1.1} under ($\mathbf{g_{\lambda}}$) conditions}

To prove Theorem \ref{th1.1} we need to estimate the term on the left hand side of inequality
\eqref{eq3.6}. Let $\delta=\dfrac{t}{t+1}$ and set $\overline{g}(x, v):= \frac{1}{v}\,\int\limits^{v}_{0} g(x, s)\,ds,$ $\overline{G}(x, v):= \int\limits^{v}_{0} \overline{g}(x, s)\,ds$ for $v > 0$. By our assumptions $ p-1\leqslant \dfrac{\overline{g}'(x, v) v}{\overline{g}(x, v)}\leqslant q-1,\quad x\in \Omega$.  We use Lemma \ref{lem2.3} with $Q(v)= \big[G^{-}_{B_{8\rho}(x_{0})}(v)]^{\delta}$, for this we need to check that $Q''(v) \geqslant 0$, indeed by our choices
\begin{multline*}
\frac{1}{\delta} Q''(v) [\overline{G}^{-}_{B_{8\rho}(x_{0})}(v)]^{2-\delta}= \overline{G}^{-}_{B_{8\rho}(x_{0})}(v) \overline{g}^{- '}_{B_{8\rho}(x_{0})}-\frac{1}{t+1}[\overline{g}^{-}_{B_{8\rho}(x_{0})}(v)]^{2}
 \geqslant \\ \geqslant
\frac{\overline{g}^{-}_{B_{8\rho}(x_{0})}(v)}{v} \left((p-1) \overline{G}^{-}_{B_{8\rho}(x_{0})}(v) - \frac{1}{t+1}\,v\overline{g}^{-}_{B_{8\rho}(x_{0})}(v)\right) \geqslant [\overline{g}^{-}_{B_{8\rho}(x_{0})}(v)]^{2}\left(\frac{p-1}{q}-\frac{1}{t+1}\right) >0.
\end{multline*}
Let $\varphi \in W_{0}(B_{8\rho}(x_{0}), \varphi \geqslant 1$ on $E$, then by Lemma \ref{lem2.3} and by the H\"{o}lder inequality we have
\begin{multline*}
\left[\overline{G}^{-}_{B_{8\rho}(x_{0})}\left(\frac{m}{\rho}\right)\right]^{\delta}\,|E| \leqslant
\int\limits_{B_{8\rho}(x_{0})}\left[\overline{G}^{-}_{B_{8\rho}(x_{0})}\left(\frac{m}{\rho}\varphi\right)\right]^{\delta}\,dx   \leqslant
\gamma\,\int\limits_{B_{8\rho}(x_{0})}\left[G^{-}_{B_{8\rho}(x_{0})}\left(m|\nabla \varphi|\right)\right]^{\delta}\,dx \leqslant \\
\qquad\qquad \leqslant
\gamma\,\left(\int\limits_{B_{8\rho}(x_{0})}[a(x)]^{-t}\,dx\right)^{1-\delta}\left(\int\limits_{B_{8\rho}(x_{0})}\,a(x)\,G\left(x, m|\nabla \varphi|\right) \,dx \right)^{\delta} \leqslant \\
\leqslant
\gamma\,\rho^{n(1-\delta)}\,[\Lambda(x_{0},\rho)]^{p\delta}\,\bigg(m\,C_{H}(E,B_{8\rho}(x_{0}); m)\bigg)^{\delta},
\end{multline*}
which yields by our choice of $m$ and $\delta$
\begin{equation*}
\gamma^{-1}\,g\left(x_{0}, \frac{m}{\rho}\right)\,\bigg(\frac{|E|}{\rho^{n}}\bigg)^{\frac{t+1}{t}}\,[\Lambda(x_{0}, \rho)]^{-p}\leqslant \rho^{1-n}\,C_{H}( E, B_{8\rho}(x_{0}); m),
\end{equation*}
and hence
\begin{equation}\label{eq3.10}
\rho\,g^{-1}_{x_{0}}\bigg(\rho^{1-n} C_{H}( E, B_{8\rho}(x_{0}); m)\bigg) \geqslant \gamma^{-1}\,m\,[\Lambda(x_{0},\rho)]^{-\bar{\beta}_{5}}\,\bigg(\frac{|E|}{\rho^{n}}\bigg)^{\frac{t+1}{t(p-1)}}, \quad \bar{\beta}_{5}=\frac{p}{p-1}.
\end{equation}
Note that by \eqref{eq3.10}, inequality \eqref{eq3.2} holds if
\begin{equation}\label{eq3.11}
m\,\left(\frac{|E|}{\rho^{n}}\right)^{\frac{t+1}{t(p-1)}}\geqslant \gamma\,\rho\,\left[\Lambda\left(x_{0},\frac{\rho}{4}\right)\right]^{\bar{\beta}_{6}},
\quad \bar{\beta}_{6}= \frac{\bar{c}_{1}}{p-1}+ \bar{\beta}_{5},
\end{equation}
with some $\gamma=\gamma(\bar{c})>0$.

In this case inequality \eqref{eq3.6} translates into
\begin{multline}\label{eq3.12}
\left| \left\{K_{\rho/4,\, 2\rho}:
v(x)\leqslant \gamma^{-1}\,\varepsilon\,m\, \left[\Lambda\left(x_{0}, \frac{\rho}{4}\right)\right]^{-\bar{\beta}_{2}-\bar{\beta}_{5}}\,\left(\frac{|E|}{\rho^{n}}\right)^{\frac{t+1}{t(p-1)}}  \right\} \right|
\leqslant \\ \leqslant \left(1-\vartheta\,\left[\Lambda\left(x_{0}, \frac{\rho}{4}\right)\right]^{-\bar{\beta}_{3}}\right)\, |K_{\rho/4,\, 2\rho}|.
\end{multline}
To complete the proof of Theorem \ref{th1.1} construct the sets $E(\rho,m):=B_{\rho}(x_{0})\cap \big\{u(x)\geqslant m \big\}$ and $\bar{E}(\rho, m):=B_{\rho}(x_{0})\cap \big\{u(x)\geqslant m\,\lambda(\rho) \big\}$,\quad $0< m < M$,\quad $E(\rho, m)\subset \bar{E}(\rho, m)$. Let $v$ be an auxiliary solution to the correspondent problem in  $\mathcal{D}:=B_{8\rho}(x_{0})\setminus \bar{E}(\rho, m)$. Since $u\geqslant v$ on $\partial\mathcal{D}$ then by the monotonicity condition  $u\geqslant v$ in $\mathcal{D}$ and inequality \eqref{eq3.12}  implies that
\begin{multline*}
\left| \left\{B_{2\rho}(x_{0}):
u(x)\leqslant \gamma^{-1}\,\varepsilon\,m\,\lambda(\rho)\left[\Lambda\left(x_{0}, \frac{\rho}{4}\right)\right]^{-\bar{\beta}_{2}-\bar{\beta}_{5}}\,
\bigg(\frac{|\bar{E}(\rho, m)|}{\rho^{n}}\bigg)^{\frac{t+1}{t(p-1)}}  \right\} \right|
\leqslant \\ \leqslant \left(1-\frac{1}{8}\vartheta\,\left[\Lambda\left(x_{0}, \frac{\rho}{4}\right)\right]^{-\bar{\beta}_{3}}\right)\, \, |B_{2\rho}(x_{0})|,
\end{multline*}
provided that inequality \eqref{eq3.11} holds.

Now we apply Lemma \ref{lem2.7} with  $r$ replaced by $\rho$, $m$ replaced by $m\,\lambda(\rho)$, $\xi \bar{\lambda}(r)\omega_{r}$ replaced by $\gamma^{-1}\varepsilon\,m\,\lambda(\rho)[\Lambda(x_{0}, \frac{\rho}{4})]^{-\bar{\beta}_{2}-\bar{\beta}_{5}} \,\bigg(\dfrac{|\bar{E}(\rho, m)|}{\rho^{n}}\bigg)^{\frac{t+1}{t(p-1)}} $ and $\alpha_{0}$ replaced by $\frac{1}{8}\vartheta\,[\Lambda(x_{0}, \frac{\rho}{4})]^{-\bar{\beta}_{3}}$. Lemma \ref{lem2.7} yields that if
\begin{equation}\label{eq3.13}
m\,\lambda(\rho)\,\bigg(\frac{|\bar{E}(\rho, m)|}{\rho^{n}}\bigg)^{\frac{t+1}{t(p-1)}}  \geqslant \gamma(\varepsilon)\,\rho\,\left[\Lambda\left(x_{0}, \frac{\rho}{4}\right)\right]^{\bar{\beta}_{2}+\bar{\beta}_{5}}
\exp\left(C_{*}\left[\Lambda\left(x_{0},\frac{\rho}{4}\right)\right]^{\bar{\beta}_{2}+\frac{\bar{\beta}_{3}}{\kappa_{1}}}\right),
\end{equation}
then
\begin{multline}\label{eq3.14}
\min\limits_{x\in B_{\frac{\rho}{2}}(x_{0})}u(x)\geqslant \gamma^{-1}\,\varepsilon\,m\,\lambda(\rho)\,\bigg(\frac{|\bar{E}(\rho, m)|}{\rho^{n}}\bigg)^{\frac{t+1}{t(p-1)}}\times\\
 \times\left[\Lambda\left(x_{0}, \frac{\rho}{4}\right)\right]^{-\bar{\beta}_{2}-\bar{\beta}_{5}}
 \exp\left(-C_{*}\left[\Lambda\left(x_{0},\frac{\rho}{4}\right)\right]^{\bar{\beta}_{2}+\frac{\bar{\beta}_{3}}{\kappa_{1}}}\right),
\end{multline}
here $\kappa_{1} >0$ is the number fixed by  Lemma \ref{lem2.7}. We can assume without loss that\\ $[\Lambda(x_{0}, \frac{\rho}{4})]^{\bar{\beta}_{2}+\bar{\beta}_{5}}\leqslant \exp\left(\Lambda\left(x_{0}, \frac{\rho}{4}\right)\right)$, therefore since $E(\rho, m) \subset \bar{E}(\rho, m)$, inequalities \eqref{eq3.13}, \eqref{eq3.14} imply \eqref{eq1.5} with $\beta_{1}=1+\bar{\beta}_{2}+\frac{\bar{\beta}_{3}}{\kappa_{1}}$, which completes the proof of Theorem \ref{th1.1}.

\section{Proof of Theorem \ref{th1.2} under ($\mathbf{g_{\mu}}$) conditions}\label{Sec4}
\medskip The proof is similar to that of Paragraphs $3.1-3.3$, we give only the sketch of the proof.

Fix $x_{0}\in \Omega$, let $E\subset B_{r}(x_{0})\subset B_{\rho}(x_{0})\subset B_{R}(x_{0})\subset\Omega$ and consider
the solution $w$ of the following problem
\begin{equation*}
{\rm div} \left(H(x, |\nabla w|) \frac{\nabla w}{ |\nabla w|^{2}}\right)=0, \quad x\in \mathcal{D}=B_{8\rho}(x_{0})\setminus E, \quad
w- m \psi \in W_{0}(\mathcal{D}),
\end{equation*}
where $m\in [\rho, 2M]$ is some fixed number and $\psi \in W_{0}(\mathcal{D})$, $\psi=1$ on $E$.

 We will assume that the following
integral identity holds:
\begin{equation}\label{eq4.1}
\int\limits_{\mathcal{D}} H(x, |\nabla w|)\,\frac{\nabla w}{|\nabla w|^{2}} \nabla\varphi \,dx=0
\quad \text{for any } \ \varphi\in W_{0}(\mathcal{D}),\quad \mathcal{D}=B_{8\rho}(x_{0})\setminus E.
\end{equation}

Testing \eqref{eq4.1} by $\varphi=(w-m)_{+}$ and by $\varphi=w_{-}$  and using condition
\eqref{eq1.4}, we obtain that $0\leqslant w\leqslant m$.
For $i, j=0,1,2, \ldots$ set $k_{j}:= k(1-2^{-j})$, $k>0$ to be chosen later,
$\rho_{i,j}:=2^{-i-j-3}\rho$,
$$
M_{i}:=\esssup\limits_{F_{i}}w, \quad
F_{i}:=\left\{ x\in \mathcal{D}: \frac{\rho}{4}(1+2^{-i})\leqslant |x-x_{0}|
\leqslant \frac{\rho}{2}(3-2^{-i}) \right\}.
$$
Fix $\overline{x}\in F_{i}$ and suppose that $(w(\overline{x})-k)_{+}\geqslant\rho$,
then $M_{i,j}(k_{j}):=\esssup\limits_{B_{\rho_{i,j}}(\overline{x})}\,(w-k_{j})\geqslant (w(\overline{x})-k)_{+}\geqslant\\ \geqslant \rho\geqslant \rho_{i,j}$.
And let
$\zeta_{i,j}\in C_{0}^{\infty}(B_{\rho_{i,j}}(\overline{x})), \ \
0\leqslant\zeta_{i,j}\leqslant 1, \ \
\zeta_{i,j}=1 \ \text{in} \ B_{\rho_{i,j+1}}(\overline{x}), \ \
|\nabla \zeta_{i,j}|\leqslant 2^{i+j+4}/\rho.$

By our choices of  we have
$w(x)\leqslant m\leqslant 2\,M$, $x\in B_{\rho_{i,j}}(\overline{x})$.
Therefore condition ($ g_{\mu}$) is applicable in $B_{\rho_{i,j}}(\overline{x})$ and we have
by ($ g_{\mu}$) with $K= M 2^{(i+j)\gamma}$ that
$$
g^{+}_{B_{\rho_{i,j}}(\overline{x})}\left( \frac{M_{i,j}(k_{j+1})}{\rho_{i,j}}\right)
\leqslant 2^{(i+j)\gamma}
g^{-}_{B_{\rho_{i,j}}(\overline{x})}\left(\frac{M_{i,j}(k_{j+1})}{\rho_{i,j}}\right).
$$
So, similarly to Section $3.1$ we obtain
\begin{multline*}
\int\limits_{B_{\rho_{i,j}}(\overline{x})}
|\nabla(w-k_{j+1})_{+}|\,\zeta_{i,j}^{\,q}\,dx
\leqslant\\ \leqslant
\gamma\,2^{\gamma(i+j)}\,M_{i+1}\,\mu\left(\frac{\rho}{4}\right)\,\Lambda\left(x_{0}, \frac{\rho}{4}\right)\,\rho^{n-1}
\,\left(\frac{|A^{+}_{\rho_{i,j}, k_{j+1}}|}{|B_{\rho_{i,j}}(\overline{x})|}\right)^{\left(1-\frac{1}{t(p-1)}\right)\left(1-\frac{1}{p}\right)+\left(1-\frac{1}{s}\right)\frac{1}{q}} \leqslant \\ \leqslant \gamma\,2^{\gamma(i+j)}
M_{i+1}\,\mu\left(\frac{\rho}{4}\right)\,\Lambda\left(x_{0}, \frac{\rho}{4}\right)\,\rho^{-n\kappa}\,k^{- 1- \kappa +\frac{1}{n}}\,\left(\int\limits_{B_{\rho_{i,j}}(\overline{x})} (v-k_{j})_{+}\,dx\right)^{1+\kappa -\frac{1}{n}},
\end{multline*}
where $\kappa= \dfrac{1}{n}-\dfrac{1}{tp}-\dfrac{1}{sq}-\dfrac{1}{p}+\dfrac{1}{q} > 0$.

From this, by standard arguments and
choosing $k$ from the condition
$$
k^{1+\frac{1}{\kappa}}= \gamma\,2^{i\gamma}\,M_{i+1}^{\frac{1}{\kappa}}\, \left[\mu\left(\frac{\rho}{4}\right)\,\Lambda(x_{0}, \frac{\rho}{4})\right]^{\frac{1}{\kappa}}\,\rho^{-n}\int\limits_{B_{\rho/2^{i+3}}(\overline{x})} w\,dx ,
$$
since $\overline{x}\in F_{i}$ is an arbitrary point, using the Young inequality and keeping in mind our assumption that $w(\overline{x})\geqslant k+\rho$ ,
from the previous we obtain for any $\varepsilon\in(0,1)$
\begin{equation}\label{eq4.2}
M_{i}\leqslant \varepsilon M_{i+1}+ \frac{\gamma\,2^{i\gamma}}{\varepsilon^{\gamma}\rho^{\,n}}\,
\left[\mu\left(\frac{\rho}{4}\right)\Lambda\left(x_{0}, \frac{\rho}{4}\right)\right]^{\frac{1}{\kappa}}\,
\int\limits_{F_{i+1}} w\,dx+\gamma\rho,
\quad i=0,1,2, \ldots
\end{equation}
Estimating the term on the right-hand side of inequality \eqref{eq4.2} completely similar to that of Section $3.1$ we arrive at
$$
g(x_{0},M_{0}/\rho)\leqslant \gamma\,\left[\mu\left(\frac{\rho}{4}\right)\Lambda\left(x_{0}, \frac{\rho}{4}\right)\right]^{1+\frac{q-1}{\kappa}}\,\rho^{1-n}\,
C_{H}(E, B_{8\rho}(x_{0}), m),
$$
which proves the upper bound of the function $w$.

The following lemma is analogous in formulation as well as in the
proof to Lemma \ref{lem3.2}.
\begin{lemma}\label{lem4.1}
There exist  numbers $\varepsilon$, $\vartheta\in(0,1)$, $\bar{\beta}_{2}, \bar{\beta}_{3} >0$ depending only on the data
such that
\begin{multline}\label{eq4.3}
\left| \left\{K_{\rho/4,\, 2\rho}:
w(x)\leqslant \varepsilon\,m\, \left[\mu\left(\frac{\rho}{4}\right)\,\Lambda\left(x_{0}, \frac{\rho}{4}\right)\right]^{-\bar{\beta}_{2}}\rho\, g^{-1}_{x_{0}} \left( \frac{C_{H}(E, B_{8\rho}(x_{0});m)}{\rho^{\,n-1}} \right)\right\} \right|
\leqslant \\ \leqslant \left(1-\vartheta\,\left[\mu\left(\frac{\rho}{4}\right)\,\Lambda\left(x_{0}, \frac{\rho}{4}\right)\right]^{-\bar{\beta}_{3}} \right) |K_{\rho/4,\, 2\rho}|.
\end{multline}
\end{lemma}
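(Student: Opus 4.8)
The plan is to repeat, almost verbatim, the argument of Lemma~\ref{lem3.2}, replacing the auxiliary solution $v$ by $w$ and condition $(g_\lambda)$ by $(g_\mu)$ throughout. The only structural change is that whenever $(g_\lambda)$ was used to compare $g^{+}_{B}$ with $g^{-}_{B}$ on a small ball by a universal constant, condition $(g_\mu)$ now produces an extra factor $\mu(r)$ (and, after the passage to the fixed radius $\rho$, a factor $\mu(\rho/4)$, since $\mu$ is non-increasing). Consequently, the quantity $\Lambda(x_0,\rho/4)$ appearing in the estimates of Section~\ref{Sec3} is systematically replaced by the product $\mu(\rho/4)\,\Lambda(x_0,\rho/4)$; moreover $m$ now ranges over $[\rho,2M]$ and no factor $\lambda(\rho)$ enters. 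We shall also use the upper bound for $w$ proved above (the analogue of Lemma~\ref{lem3.1} with $\Lambda(x_0,\rho/4)$ replaced by $\mu(\rho/4)\Lambda(x_0,\rho/4)$), and the version of the smallness hypothesis \eqref{eq3.2} obtained by the same replacement.

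First I would carry out the two testing steps. Testing \eqref{eq4.1} with $\varphi=w-m\zeta_{1}^{\,q}$, where $\zeta_{1}\in C^{\infty}_{0}(B_\rho(x_0))$ equals $1$ on $B_{\rho/2}(x_0)$ and $|\nabla\zeta_1|\le 2/\rho$, and applying $(g_\mu)$ with $K=2M$ together with the Young inequality \eqref{eq2.1}, yields for $\varepsilon_1\in(\rho,2M)$
\[
\int_{\mathcal D}H(x,|\nabla w|)\,dx\le\frac{\gamma m}{\varepsilon_1}\int_{K_{\rho/2,\,\rho}}H(x,|\nabla w|)\,dx+\gamma m\,\mu\!\left(\tfrac{\rho}{4}\right)\Lambda\!\left(x_0,\tfrac{\rho}{4}\right)g(x_0,\varepsilon_1/\rho)\,\rho^{n-1}.
\]
Testing next with $\varphi=w\,\zeta_{2}^{\,q}$, $\zeta_{2}\in C^{\infty}_{0}(K_{\rho/4,\,2\rho})$ equal to $1$ on $K_{\rho/2,\,\rho}$, one estimates $\int_{K_{\rho/2,\,\rho}}H(x,|\nabla w|)\,dx$ by $\gamma\int_{K_{\rho/4,\,2\rho}}b(x)\,G^{+}_{K_{\rho/4,\,2\rho}}(w/\rho)\,dx$; combining with the definition of $C_H(E,B_{8\rho}(x_0);m)$ and choosing $\varepsilon_1$ from
\[
\mu\!\left(\tfrac{\rho}{4}\right)\Lambda\!\left(x_0,\tfrac{\rho}{4}\right)g(x_0,\varepsilon_1/\rho)=\overline\varepsilon_1\,\rho^{1-n}\,C_H(E,B_{8\rho}(x_0);m),\qquad\overline\varepsilon_1\in(0,1),
\]
gives the analogue of \eqref{eq3.7}. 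Here one checks $8\rho\le\varepsilon_1\le 2M$: the lower bound follows from the modified \eqref{eq3.2} for $\bar c$ large, and the upper bound $\varepsilon_1\le 2M$ from comparing $C_H(E,B_{8\rho}(x_0);m)$ with $C_H(B_\rho(x_0),B_{8\rho}(x_0);m)$ via a cut-off on $B_{2\rho}(x_0)$ and $(g_\mu)$ with $K=2M$, exactly as in Lemma~\ref{lem3.2}.

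Next I would split $K_{\rho/4,\,2\rho}=K'_{\rho/4,\,2\rho}\cup K''_{\rho/4,\,2\rho}$ with
\[
K'_{\rho/4,\,2\rho}=K_{\rho/4,\,2\rho}\cap\Bigl\{w\le\varepsilon\,g^{-1}_{x_0}\Bigl(\tfrac{C_H(E,B_{8\rho}(x_0);m)}{\rho^{\,n-1}\,[\mu(\rho/4)\Lambda(x_0,\rho/4)]^{\bar\beta_4}}\Bigr)\Bigr\},
\]
$\varepsilon\in(0,1)$ and $\bar\beta_4>0$ to be fixed. On $K'$ condition $(g_\mu)$ with $K=1$ applies (after choosing $\bar c$ so that the argument of $g^{-1}_{x_0}$ is at most $1$), and bounds the corresponding contribution by $\varepsilon\,\gamma\,[\mu(\rho/4)\Lambda(x_0,\rho/4)]^{a}\,C_H(E,B_{8\rho}(x_0);m)$ with an exponent $a$ that is made $\le 0$ by taking $\bar\beta_4$ large enough; on $K''$ the upper bound for $w$ gives a contribution $\le\gamma(\varepsilon)\,[\mu(\rho/4)\Lambda(x_0,\rho/4)]^{b}\,\rho^{-n}\,C_H(E,B_{8\rho}(x_0);m)\,|K''_{\rho/4,\,2\rho}|$. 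Choosing $\varepsilon$ with $\varepsilon\gamma=\tfrac12$ absorbs the $K'$-term into the left-hand side, and dividing by $C_H(E,B_{8\rho}(x_0);m)$ yields $|K''_{\rho/4,\,2\rho}|\ge\gamma^{-1}\,[\mu(\rho/4)\Lambda(x_0,\rho/4)]^{-b}\,\rho^{n}$; a final rearrangement of the $g^{-1}_{x_0}$-threshold by means of condition $(g)$ then turns this into \eqref{eq4.3} with suitable constants $\vartheta\in(0,1)$ and $\bar\beta_2,\bar\beta_3>0$.

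As in Section~\ref{Sec3}, the only genuinely delicate point is the two-sided estimate $8\rho\le\varepsilon_1\le 2M$ making $(g_\mu)$ legitimately applicable at each stage (in particular, the range $m\le 2M$ is precisely what $(g_\mu)$ permits); the rest is the bookkeeping of the exponents of $\mu(\rho/4)\Lambda(x_0,\rho/4)$, which mirror those of $\Lambda(x_0,\rho/4)$ in Lemma~\ref{lem3.2} line by line.
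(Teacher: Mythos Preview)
Your proposal is correct and is precisely the approach the paper takes: the paper itself states that Lemma~\ref{lem4.1} ``is analogous in formulation as well as in the proof to Lemma~\ref{lem3.2}'' and gives no separate argument, so the intended proof is exactly the line-by-line repetition of Lemma~\ref{lem3.2} with $(g_\lambda)$ replaced by $(g_\mu)$, the range $m\in[\rho,2M]$, and every occurrence of $\Lambda(x_0,\rho/4)$ replaced by $\mu(\rho/4)\,\Lambda(x_0,\rho/4)$, coupled with the $(g_\mu)$-version of the upper bound for $w$ and of hypothesis~\eqref{eq3.2}. Your identification of the two testing steps, the choice of $\varepsilon_1$, the splitting $K'=K'_{\rho/4,2\rho}\cup K''_{\rho/4,2\rho}$, and the absorption argument mirror the paper's proof of Lemma~\ref{lem3.2} exactly.
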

Similarly to \eqref{eq3.12}, inequality \eqref{eq4.3} translates into
\begin{multline*}
\left| \left\{K_{\rho/4,\, 2\rho}:
w(x)\leqslant \gamma^{-1}\,\varepsilon\,m\, \left[\mu\left(\frac{\rho}{4}\right)\,\Lambda\left(x_{0}, \frac{\rho}{4}\right)\right]^{-\bar{\beta}_{2}-\bar{\beta}_{5}}\,\bigg(\frac{|E|}{\rho^{n}}\bigg)^{\frac{t+1}{t(p-1)}}  \right\} \right|
\leqslant \\ \leqslant \left(1-\vartheta\,\left[\mu\left(\frac{\rho}{4}\right)\,\Lambda\left(x_{0}, \frac{\rho}{4}\right)\right]^{-\bar{\beta}_{3}}\right)\, |K_{\rho/4,\, 2\rho}|.
\end{multline*}
Set $E=E(\rho,N):=B_{\rho}(x_{0})\cap \big\{u(x)\geqslant m \big\}$
with any $0 < m \leqslant M$, by Lemma \ref{lem2.8}
 with  $r$ replaced by $\rho$, $\xi\,\omega_{r}$ replaced by $\gamma^{-1}\varepsilon\,m\,\left[\mu\big(\frac{\rho}{4}\big)\,\Lambda(x_{0}, \frac{\rho}{4})\right]^{-\bar{\beta}_{2}-\bar{\beta}_{5}} \,\bigg(\dfrac{|E(\rho, m)|}{\rho^{n}}\bigg)^{\frac{t+1}{t(p-1)}} $ and $\alpha_{0}$ replaced by $\frac{1}{8}\vartheta\,\left[\mu\big(\frac{\rho}{4}\big)\,\Lambda(x_{0}, \frac{\rho}{4})\right]^{-\bar{\beta}_{3}}$, from the previous inequality, using monotonicity condition  we obtain that either
\begin{equation}\label{eq4.4}
m\,\bigg(\frac{|E(\rho, m)|}{\rho^{n}}\bigg)^{\frac{t+1}{t(p-1)}}  \negthickspace\leqslant \negthickspace \gamma(\varepsilon)\,\rho\,\left[\mu\left(\frac{\rho}{4}\right)\negthickspace\Lambda\left(x_{0}, \frac{\rho}{4}\right)\right]^{\bar{\beta}_{2}+\bar{\beta}_{5}}\negthickspace\exp\left(C_{*}\left[\mu\left(\frac{\rho}{4}\right)\negthickspace\Lambda\left(x_{0},\frac{\rho}{4}\right)\right]
^{\bar{\beta}_{2}+\frac{\bar{\beta}_{3}}{\kappa_{1}}}\right),
\end{equation}
or
\begin{multline}\label{eq4.5}
\min\limits_{x\in B_{\frac{\rho}{2}}(x_{0})}\negthickspace \negthickspace \negthickspace u(x)\negthickspace\geqslant \negthickspace \gamma^{-1}\,\varepsilon\,m\,\bigg(\frac{|E(\rho, m)|}{\rho^{n}}\bigg)^{\frac{t+1}{t(p-1)}} \negthickspace \left[\Lambda\left(x_{0}, \frac{\rho}{4}\right)\right]^{-\bar{\beta}_{2}-\bar{\beta}_{5}}\negthickspace\negthickspace\negthickspace\exp\left(-C_{*}\left[\Lambda\left(x_{0},\frac{\rho}{4}\right)\right]
^{\bar{\beta}_{2}+\frac{\bar{\beta}_{3}}{\kappa_{1}}}\right),
\end{multline}
here $\kappa_{1} >0$ is the number fixed by Lemma \ref{lem2.7}. We  assume without loss that\\ $[\mu\big(\frac{\rho}{4}\big)\,\Lambda(x_{0}, \frac{\rho}{4})]^{\bar{\beta}_{2}+\bar{\beta}_{5}}\leqslant \exp\big(\mu\big(\frac{\rho}{4}\big)\,\Lambda(x_{0}, \frac{\rho}{4})\big)$, therefore  inequalities \eqref{eq4.4}, \eqref{eq4.5} imply \eqref{eq1.6} with $\beta_{1}=1+\bar{\beta}_{2}+\frac{\bar{\beta}_{3}}{\kappa_{1}}$, which completes the proof of Theorem \ref{th1.2}.


\section{ Harnack type inequalities,
proof of Theorems \ref{th1.3}, \ref{th1.4}}\label{Sec5}
\subsection{Proof of Theorem \ref{th1.3}}
Theorem \ref{th1.3} follows immediately from Theorems \ref{th1.1}, \ref{th1.2}, indeed set
$$\bar{m}(\rho)=\frac{1}{\lambda(\rho)}\exp\left(\gamma\,\left[\Lambda\left(x_{0},\frac{\rho}{4}\right)\right]^{\beta_{1}}\right)
\left(\min\limits_{B_{\frac{\rho}{2}}(x_{0})}u(x) + \rho \right),$$
where $\gamma$ and $\beta_{1}$ are the positive numbers defined in Theorem \ref{th1.1}. By \eqref{eq1.5}  we obtain for
$\theta\in\big(0,\frac{t}{t+1}(p-1)\big)$
\begin{multline}\label{eq5.1}
\rho^{-n}\,\int\limits_{B_{\rho}(x_{0})}u^{\theta}\,dx=\theta \,\rho^{-n}\,\int\limits_{0}^{\infty}|\{B_{\rho}(x_{0}): u(x) >m \}|\,m^{\theta-1}\,dm \leqslant \bar{m}^{\theta}(\rho)+\\+\gamma \bar{m}^{\frac{t}{t+1}(p-1)}(\rho)\int\limits_{\bar{m}(\rho)}^{\infty} m^{\theta-\frac{t}{t+1}(p-1)-1} \,dm \leqslant \gamma\,\big(t(p-1)-\theta(t+1)\big)^{-1}\,\bar{m}^{\theta}(\rho),
\end{multline}
which proves inequality \eqref{eq1.7}.

To prove  \eqref{eq1.8} we use inequality \eqref{eq1.6} with
$\bar{m}(\rho)\negthickspace=\negthickspace\exp\left(\gamma\,\left[\mu\left(\frac{\rho}{4}\right)\negthickspace\Lambda\left(x_{0},\frac{\rho}{4}\right)\right]
^{\beta_{1}}\right)\negthickspace\left(\min\limits_{B_{\frac{\rho}{2}}(x_{0})}\negthickspace u(x) + \rho \negthickspace\right)$,
completely similar to the previous
\begin{equation*}
\rho^{-n}\,\int\limits_{B_{\rho}(x_{0})}u^{\theta}\,dx\leqslant \bar{m}^{\theta}(\rho)+\gamma \bar{m}^{\frac{t}{t+1}(p-1)}(\rho)\int\limits_{\bar{m}(\rho)}^{\infty} m^{\theta-\frac{t}{t+1}(p-1)-1} \,dm \leqslant \gamma\big(t(p-1)-\theta(t+1)\big)^{-1} \bar{m}^{\theta}(\rho),
\end{equation*}
 which completes the proof of Theorem \ref{th1.3}.

\subsection{Proof of Theorem \ref{th1.4}}
The proof of Theorem \ref{th1.4} is almost standard. Fix $\sigma\in(0,\frac{1}{8})$, $s\in (\frac{3}{4}\rho, \frac{7}{8}\rho)$
and for $j=0,1,2,....$ set $\rho_{j}:=s (1-\sigma +\sigma  2^{-j})$, $B_{j}:=B_{\rho_{j}}(x_{0})$ and let $M_{0}:=\max\limits_{B_{0}} u$,
$M_{\sigma}:=\max\limits_{B_{\infty}} u$. Similarly to Section \ref{Sec4} we obtain
\begin{equation*}
M_{\sigma}^{1+\frac{1}{\kappa}} \leqslant \gamma\,\sigma^{-\gamma}\,M_{0}^{\frac{1}{\kappa}}\,
\left[\mu\left(\frac{\rho}{2}\right)\,\Lambda\left(x_{0}, \frac{\rho}{2}\right)\right]^{\frac{1}{\kappa}}\,\rho^{-n}\int\limits_{B_{0}}\,u\,dx +\gamma\,\rho,
\quad \kappa=\frac{1}{n}-\frac{1}{tp}-\frac{1}{sq}-\frac{1}{p}+\frac{1}{q} >0,
\end{equation*}
which implies for any $\varepsilon, \theta \in(0,1)$ that
\begin{equation*}
M_{\sigma} \leqslant \varepsilon\,M_{0} +\gamma\,\sigma^{-\gamma}\left[\mu\left(\frac{\rho}{2}\right)\,\Lambda\left(x_{0}, \frac{\rho}{2}\right)\right]^{\frac{1}{\kappa\theta}}\left(\rho^{-n}\int\limits_{B_{0}}\,u^{\theta}\,dx\right)^{\frac{1}{\theta}} +\gamma\,\rho.
\end{equation*}
Iterating this inequality we arrive at
\begin{equation}\label{eq5.2}
\sup\limits_{B_{\frac{\rho}{2}}(x_{0})} u \leqslant \gamma\,\left[\mu\left(\frac{\rho}{2}\right)\,\Lambda\left(x_{0}, \frac{\rho}{2}\right)\right]^{\frac{1}{\kappa\theta}}\left(\rho^{-n}\int\limits_{B_{0}}\,u^{\theta}\,dx\right)^{\frac{1}{\theta}} +\gamma\,\rho.
\end{equation}
Collecting inequalities \eqref{eq1.7} and \eqref{eq5.2} we obtain with $\theta=\frac{1}{2} \min\left(1,\frac{t}{t+1}(p-1)\right)$
\begin{multline*}
\sup\limits_{B_{\frac{\rho}{2}}(x_{0})} u \leqslant \frac{\gamma}{\lambda(\rho)}
\,\left[\mu\left(\frac{\rho}{2}\right)\,\Lambda\left(x_{0}, \frac{\rho}{2}\right)\right]^{\frac{1}{\kappa\theta}}\,
\exp\left(\gamma\,\left[\Lambda\left(x_{0},\frac{\rho}{4}\right)\right]^{\beta_{1}}\right)\left(\inf\limits_{B_{\frac{\rho}{2}}(x_{0})}u(x) + \rho \right)\leqslant \\
\leqslant \frac{\gamma}{\lambda(\rho)}\,\left[\mu\left(\frac{\rho}{4}\right)\right]^{\frac{1}{\kappa\theta}}\,
\exp\left(\gamma\,\left[\Lambda\left(x_{0},\frac{\rho}{4}\right)\right]^{\beta_{1}+ 1}\right)\left(\inf\limits_{B_{\frac{\rho}{2}}(x_{0})}u(x) + \rho \right),
\end{multline*}
which proves Theorem \ref{th1.4} under the $(g_{\lambda})$ and $(g_{\mu})$ conditions.

Similarly, by \eqref{eq1.8} and \eqref{eq5.2}  we obtain for any $\theta=\frac{1}{2}(1,\frac{t}{t+1}(p-1))$
\begin{equation*}
\sup\limits_{B_{\frac{\rho}{2}}(x_{0})} u \leqslant\gamma\exp\left(\gamma\,\left[\mu\left(\frac{\rho}{4}\right)\,\Lambda\left(x_{0},\frac{\rho}{4}\right)\right]^{\beta_{1}+\frac{1}{\kappa\theta}}\right)
\left(\inf\limits_{B_{\frac{\rho}{2}}(x_{0})}u(x) + \rho \right),
\end{equation*}
which completes the proof of Theorem \ref{th1.4} under $(g_{\mu})$ condition.

\subsection{Proof of Theorem \ref{th1.3} using the local clustering lemma under the \\conditions ($g_{\lambda}$) and $\Lambda(x_{0}, \rho)\asymp$ const }
In this Section, as it was mentioned, we give a simple proof of Theorem \ref{th1.3} for the function $u$ which belongs to the corresponding $DG_{g, 1}(\Omega)$ De Giorgi classes, using the local clustering lemma \cite{DiBGiaVes} under the conditions $(g_{\lambda})$ and  $\Lambda(x_{0}, \rho)\asymp$ const, $\rho > 0$. For the function $\lambda$ we additionally assume that there exists $c>0$ such that
\begin{equation*}
\lambda(\rho)\leqslant \bigg(\frac{\rho}{r}\bigg)^{c}\,\lambda(r),\quad 0 < r < \rho.
\end{equation*}
For $\rho >0$ denote by $K_{\rho}(y)$ a cube of edge $\rho$ centered at $y$.

\begin{lemma}{\rm{\textbf{(local clustering lemma \cite{DiBGiaVes})}}}
Let $u \in W^{1,1}(K_{\rho}(x_{0}))$  and let
\begin{equation*}
\int\limits_{K_{\rho}(x_{0})}|\nabla(u-k)_{-} |\,dx \leqslant \mathcal{K}\,k\,\rho^{n-1}\quad \text{and}\quad |\{K_{\rho}(x_{0}) : u > k \}|\geqslant \alpha\,|K_{\rho}(x_{0})|,
\end{equation*}
for some $k,\,\mathcal{K} >0$ and $\alpha \in(0,1)$. Then for any $\eta\in (0,1)$ and any $\delta \in(0,1)$ there exist $\bar{x} \in K_{\rho}(x_{0})$ and   $\bar{\varepsilon} =\bar{\varepsilon}(n) \in(0,1)$ such that
\begin{equation*}
|\{K_{r}(\bar{x}) : u \geqslant \delta\,k \}| \geqslant (1-\eta)\,|K_{r}(\bar{x})|,\quad r=\bar{\varepsilon}\,\eta\,(1-\delta)\,\frac{\alpha^{2}}{\mathcal{K}}\,\rho.
\end{equation*}
\end{lemma}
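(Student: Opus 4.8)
\medskip
\noindent\emph{Proof plan.} The statement is purely measure-theoretic (only $u\in W^{1,1}$ is used), and the plan is to prove it by contradiction, combining the De\,Giorgi--Poincar\'e Lemma~\ref{lem2.1} -- which holds, with a dimensional constant, for cubes as well as for balls -- with a Vitali covering argument, in the spirit of DiBenedetto--Gianazza--Vespri. Write $Q=K_\rho(x_0)$, $E:=\{x\in Q: u(x)>k\}$, so $|E|\ge\alpha|Q|$, and put $r:=\bar\varepsilon\,\eta(1-\delta)\alpha^{2}\rho/\mathcal K$, where $\bar\varepsilon=\bar\varepsilon(n)\in(0,1)$ is a small dimensional constant to be fixed at the end; we may take $\bar\varepsilon$ so small that $r\le\alpha\rho/(2C_n)$ (in particular $K_r(x)\subset Q$ for $x$ in the slightly smaller cube $Q_r:=K_{\rho-r}(x_0)$). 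Suppose, for contradiction, that
\[
\bigl|\{y\in K_r(x):u(y)<\delta k\}\bigr|>\eta\,|K_r(x)|\qquad\text{for every }x\text{ with }K_r(x)\subset Q;
\]
I will show this forces $r\ge c(n)\,\eta(1-\delta)\alpha^{2}\rho/\mathcal K$, which is impossible once $\bar\varepsilon<c(n)$.

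First I would localize the measure information on $E$. For $x\in Q_r$ set $\phi(x):=|K_r(x)\cap E|$. By Fubini, $\int_{Q_r}\phi\,dx=\int_E|Q_r\cap K_r(y)|\,dy\ge r^{n}\,|E\cap K_{\rho-2r}(x_0)|\ge r^{n}\bigl(|E|-C_n r\rho^{n-1}\bigr)\ge\tfrac12\alpha r^{n}\rho^{n}$, using $r\le\alpha\rho/(2C_n)$. Since $0\le\phi\le r^{n}$ on $Q_r$, Chebyshev's inequality yields a set
\[
S:=\Bigl\{x\in Q_r:\ |K_r(x)\cap E|\ge\tfrac{\alpha}{4}\,|K_r|\Bigr\},\qquad |S|\ge\tfrac{\alpha}{4}\,|Q|.
\]

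Next I would use, on each cube centered in $S$, the coexistence of two competing level sets. Fix $x\in S$: by the contradiction hypothesis $|\{K_r(x):u<\delta k\}|>\eta|K_r|$, whereas $|\{K_r(x):u\ge k\}|\ge|K_r(x)\cap E|\ge\tfrac{\alpha}{4}|K_r|$. Applying Lemma~\ref{lem2.1} on $K_r(x)$ with lower level $\delta k$, upper level $k$ -- note $\{\delta k\le u<k\}\subset\{u<k\}$, where $|\nabla u|=|\nabla(u-k)_-|$ a.e. -- and $k-\delta k=(1-\delta)k$ gives
\[
(1-\delta)k\cdot\eta|K_r|\cdot\tfrac{\alpha}{4}|K_r|\ \le\ \gamma\,r^{\,n+1}\!\!\int_{K_r(x)}\!\!|\nabla(u-k)_-|\,dy,
\]
that is $\int_{K_r(x)}|\nabla(u-k)_-|\,dy\ge c_n(1-\delta)\eta\alpha\,k\,r^{\,n-1}$ for all $x\in S$. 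Then I would extract greedily from $\{K_r(x):x\in S\}$ a disjoint subfamily $K_r(x_1),\dots,K_r(x_N)$ with $S\subset\bigcup_i K_{2r}(x_i)$, so $N\ge|S|/(2r)^{n}\ge c_n'\alpha(\rho/r)^{n}$, and sum over these disjoint cubes inside $Q$ against the hypothesis $\int_Q|\nabla(u-k)_-|\le\mathcal K k\rho^{n-1}$:
\[
\mathcal K\,k\,\rho^{\,n-1}\ \ge\ \sum_{i=1}^{N}\int_{K_r(x_i)}|\nabla(u-k)_-|\,dy\ \ge\ c_n'\alpha\Bigl(\tfrac{\rho}{r}\Bigr)^{n}c_n(1-\delta)\eta\alpha\,k\,r^{\,n-1}\ =\ c_n''(1-\delta)\eta\alpha^{2}\,k\,\frac{\rho^{n}}{r}.
\]
Hence $r\ge c_n''(1-\delta)\eta\alpha^{2}\rho/\mathcal K$; choosing $\bar\varepsilon(n)<\min\{c_n'',\,1/(2C_n)\}$ (the latter guaranteeing the smallness used above) contradicts the definition of $r$. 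Therefore the contradiction hypothesis is false, and since $\{u\ge\delta k\}\supset\{u>\delta k\}$, this is exactly the asserted inequality for the corresponding $\bar x$.

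The step I expect to be the main obstacle is the first one: the hypothesis bounds $|E|$ only globally, whereas the De\,Giorgi--Poincar\'e inequality needs, inside each small cube, a lower bound on the measures of \emph{both} $\{u<\delta k\}$ and $\{u\ge k\}$. The averaging/Chebyshev argument is precisely what turns the single global bound on $|E|$ into a local statement holding on a large set of centers, and controlling the boundary layer $Q\setminus Q_r$ in that step is what forces $r\lesssim\alpha\rho$, i.e. part of the smallness of $\bar\varepsilon$. Everything else -- the cube version of Lemma~\ref{lem2.1}, the greedy Vitali selection, and the bookkeeping of the dimensional constants -- is routine.
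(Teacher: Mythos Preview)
The paper does not prove this lemma; it is quoted verbatim from DiBenedetto--Gianazza--Vespri \cite{DiBGiaVes} and used as a black box in Section~\ref{Sec5}. Your argument is a correct reconstruction of that proof, and the strategy --- localize the global measure bound on $E=\{u>k\}$ by averaging/Chebyshev, apply the cube version of the De\,Giorgi--Poincar\'e Lemma~\ref{lem2.1} on each small cube where both level sets coexist, then sum over a disjoint subfamily to contradict the global gradient bound --- is exactly the DGV approach.

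One technical point is worth tightening. You claim $\bar\varepsilon=\bar\varepsilon(n)$, yet you also invoke the smallness of $\bar\varepsilon$ to enforce $r\le\alpha\rho/(2C_n)$ for the boundary--layer estimate in the averaging step; since $r=\bar\varepsilon\,\eta(1-\delta)\alpha^{2}\rho/\mathcal K$, a purely dimensional choice of $\bar\varepsilon$ does not guarantee this when $\mathcal K$ is small (and indeed the statement is tacitly restricted to the regime $r<\rho$, since $u$ is only given on $K_\rho(x_0)$). The standard remedy, closer to the DGV original, is to replace the Vitali step by a \emph{partition} of $K_\rho(x_0)$ into $N\sim(\rho/r)^{n}$ congruent subcubes of side comparable to $r$: pigeonhole gives at least $\tfrac{\alpha}{2}N$ subcubes $K$ with $|K\cap E|\ge\tfrac{\alpha}{2}|K|$, the contradiction hypothesis and Lemma~\ref{lem2.1} apply to each, and summing over this already disjoint family yields $r\ge c(n)\,\eta(1-\delta)\alpha^{2}\rho/\mathcal K$ with no preliminary smallness of $r$ needed. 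With that adjustment your proof is complete and matches the source.
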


Let $\zeta\in C_{0}^{\infty}(K_{2\rho}(x_{0}))$, $\zeta=1$ on $K_{\rho}(x_{0})$, $0\leqslant \zeta\leqslant 1$, $|\nabla \zeta|\leqslant \dfrac{2}{\rho}$. For $m\in(0, M)$ set $k=m\lambda(r)$, where $r\in (0,\rho)$ to be chosen. If $ m \lambda(r)\geqslant \rho$, then by   condition
$(g_{\lambda})$
\begin{equation*}
\frac{g^{+}_{K_{2\rho}(x_{0})}\big(m\frac{\lambda(\bar{r})}{\rho}\big)}{g^{-}_{K_{2\rho}(x_{0})}\big(m\frac{\lambda(\bar{r})}{\rho}\big)}\leqslant C(M),
\end{equation*}
therefore inequality \eqref{eq2.4} can be rewritten as
\begin{equation*}
\int\limits_{K_{\rho}(x_{0})}\,|\nabla(u-k)_{-}|\,dx \leqslant \gamma\,k\,\rho^{n-1}.
\end{equation*}
Assume that with some $\alpha\in(0, 1)$
\begin{equation}\label{eq5.3}
|E(\rho, m)|:=|\{K_{\rho}(x_{0}): u\geqslant m\}| \geqslant \alpha\,|K_{\rho}(x_{0})|,
\end{equation}
then since
\begin{equation*}
|E(r, \rho, m)|:= |\{K_{\rho}(x_{0}): u\geqslant \lambda(r)\,m\}| \geqslant |E(\rho, m)| \geqslant \alpha\,|K_{\rho}(x_{0})|,
\end{equation*}
by the local clustering lemma with $k=\lambda(r)\,m$, $\delta=\frac{1}{2}$ and $\eta=\nu_{1}$  we obtain that
\begin{equation}\label{eq5.4}
|\{K_{r}(\bar{x}) : u \geqslant \frac{1}{2}\,\lambda(r)\,m \}| \geqslant (1-\nu_{1})\,|K_{r}(\bar{x})|,\quad r=\varepsilon\,\alpha^{2}\,\rho,\quad \varepsilon= \bar{\varepsilon}\,\,\frac{\nu_{1}}{2\,\gamma},
\end{equation}
here $\nu_{1}\in(0, 1)$ is the number fixed by  Lemma \ref{lem2.5}. Inequality \eqref{eq5.4} together with Lemma \ref{lem2.5} imply that
\begin{equation}\label{eq5.5}
\inf\limits_{K_{\frac{r}{2}}(\bar{x})}\,u \geqslant \frac{1}{8}\,\lambda(r)\,m ,
\end{equation}
provided that $\lambda(r)\,m \geqslant 8\,r$. By \eqref{eq5.5} and  Lemma \ref{lem2.7} there exists $C_{*} >0$ depending only on the data
such that
\begin{equation*}
\inf\limits_{K_{r}(\bar{x})}\,u \geqslant 2^{-C_{*}-3}\,\lambda(r)\,m ,
\end{equation*}
provided that $\lambda(r)\,m\geqslant 2^{C_{*}+3}\,r$. Repeating the previous arguments, on the $j-th$ step we obtain
\begin{equation*}
\inf\limits_{K_{2^{j} r}(\bar{x})}\,u \geqslant 2^{-j C_{*}-3}\,\lambda(r)\,m ,
\end{equation*}
provided that $\lambda(r)\,m\geqslant 2^{j C_{*}+3}\,2^{j}\,r$. Choosing $j$ from the condition $2^{j}\,r=\rho$, from the previous we obtain
\begin{equation*}
\inf\limits_{K_{\frac{\rho}{2}}(x_{0})}\,u \geqslant\inf\limits_{K_{2^{j} r}(\bar{x})}\,u \geqslant 2^{-j C_{*}-3}\,\lambda(r)\,m =
\frac{\varepsilon^{C_{*}}}{8}\,\alpha^{2C_{*}}\,\lambda(r)\,m\geqslant \frac{\varepsilon^{C_{*}+c}}{8}\,\alpha^{2(C_{*}+c)}\,\lambda(\rho)\,m,
\end{equation*}
provided that $\lambda(\rho)\,m\geqslant 8\,\varepsilon^{-C_{*}-c}\,\alpha^{-2(C_{*}+c)}\,\rho,$ which yield
\begin{equation}\label{eq5.6}
m\,\lambda(\rho)\,\alpha^{2(C_{*}+c)} \leqslant \gamma\,\left(\inf\limits_{K_{\frac{\rho}{2}}(x_{0})}\,u +\rho \right).
\end{equation}
This is an analog of inequality \eqref{eq1.5} of Theorem \ref{th1.1}. From this, using our choice of $\alpha$, similarly to inequality
\eqref{eq5.1} we obtain
for any $\theta \in\left(0, \frac{1}{2(C_{*}+c)}\right)$
\begin{equation}\label{eq5.7}
\left(\rho^{-n}\int\limits_{B_{\rho}(x_{0})}\,u^{\theta}\,dx\right)^{\frac{1}{\theta}}\leqslant \gamma\,(1-2(C_{*}+c)\theta)^{-\frac{1}{\theta}}\,[\lambda(\rho)]^{-1}\,\left(\inf\limits_{B_{\frac{\rho}{2}}(x_{0})}\,u +\rho \right),
\end{equation}
which completes the proof of Theorem \ref{th1.3} under the conditions $(g_{\lambda})$  and $\Lambda(x_{0}, \rho)\asymp$ const.




\vskip3.5mm
{\bf Acknowledgements.} This work is supported by grants of Ministry of Education and Science of Ukraine (project number is 0121U109525) and by the National Academy of Sciences of Ukraine (project numbers are 0121U111851, 0120U100178).

\bigskip

CONTACT INFORMATION

\medskip

Oleksandr V.~Hadzhy\\
Institute of Applied Mathematics and Mechanics,
National Academy of Sciences of Ukraine, Gen. Batiouk Str. 19, 84116 Sloviansk, Ukraine\\
aleksanderhadzhy@gmail.com

\medskip
Maria O. Savchenko\\
Institute of Applied Mathematics and Mechanics,
National Academy of Sciences of Ukraine, Gen. Batiouk Str. 19, 84116 Sloviansk, Ukraine\\
Vasyl' Stus Donetsk National University,
600-richcha Str. 21, 21021 Vinnytsia, Ukraine\\shan$\_$maria@ukr.net

\medskip
Igor I.~Skrypnik\\Institute of Applied Mathematics and Mechanics,
National Academy of Sciences of Ukraine, Gen. Batiouk Str. 19, 84116 Sloviansk, Ukraine\\
Vasyl' Stus Donetsk National University,
600-richcha Str. 21, 21021 Vinnytsia, Ukraine\\ihor.skrypnik@gmail.com

\medskip
Mykhailo V.~Voitovych\\Institute of Applied Mathematics and Mechanics,
National Academy of Sciences of Ukraine, Gen. Batiouk Str. 19, 84116 Sloviansk, Ukraine\\voitovichmv76@gmail.com

\end{document}